\title{$KU$-local zeta-functions of finite CW-complexes.}
\date{June 2023.}
\author{A. Salch}
\begin{document}
\begin{abstract}
Begin with the Hasse-Weil zeta-function of a smooth projective variety over $\mathbb{Q}$. 
Replace the variety with a finite CW-complex, replace \'{e}tale cohomology with complex $K$-theory $KU^*$, and replace the $p$-Frobenius operator with the $p$th Adams operation on $K$-theory. This simple idea yields a kind of ``$KU$-local zeta-function'' of a finite CW-complex. For a wide range of finite CW-complexes $X$ with torsion-free $K$-theory, we show that this zeta-function admits analytic continuation to a meromorphic function on the complex plane, with a nice functional equation, and whose special values in the left half-plane recover the $KU$-local stable homotopy groups of $X$ away from $2$.

We then consider a more general and sophisticated version of the $KU$-local zeta-function, one which is suited to finite CW-complexes $X$ with nontrivial torsion in their $K$-theory. This more sophisticated $KU$-local zeta-function involves a product of $L$-functions of complex representations of the torsion subgroup of $KU^0(X)$, similar to how the Dedekind zeta-function of a number field factors as a product of Artin $L$-functions of complex representations of the Galois group. For a wide range of such finite CW-complexes $X$, we prove analytic continuation, and we show that the special values in the left half-plane recover the $KU$-local stable homotopy groups of $X$ away from $2$ if and only if the skeletal filtration on the torsion subgroup of $KU^0(X)$ splits completely.
\end{abstract}
\maketitle

\section{Introduction.}

\subsection{The main ideas and results.}
\label{The main ideas and results.}

Recall (e.g. Theorem 8.10 from \cite{MR737778}) the calculation of the $KU[1/2]$-local stable homotopy groups of spheres\footnote{The reader who is not already familiar with stable homotopy groups of Bousfield localizations is advised to skip ahead to \cref{The broader program}, below, where we give a brief introduction to the idea, and its place within computational stable homotopy theory.} by Adams-Baird and Ravenel:
\begin{theorem}\label{ravenel calc}
The ring of homotopy groups $\pi_*(L_{KU[1/2]}S^0)$ of the $KU[1/2]$-local sphere is determined by the following:
\begin{itemize}
\item $\pi_0(L_{KU[1/2]}S^0) \cong \mathbb{Z}[1/2]$.
\item $\pi_{-1}(L_{KU[1/2]}S^0) \cong 0$.
\item $\pi_{-2}(L_{KU[1/2]}S^0) \cong (\mathbb{Q}/\mathbb{Z})[1/2]$.
\item For all $n>0$, $\pi_{2n-1}(L_{KU[1/2]}S^0)$ is a cyclic group of order equal to the denominator\footnote{We adopt the convention that the denominator of $0\in\mathbb{Q}$ is $1$.} of the rational number $\zeta(1-n)$, up to a power of $2$. Here $\zeta(1-n)$ is the value of the Riemann zeta-function at $1-n$.
\item For all $n>0$, $\pi_{2n}(L_{KU[1/2]}S^0)$ is trivial.
\item For each integer $n$, the multiplication map $\pi_n(L_{KU[1/2]}S^0) \times \pi_{-2-n}(L_{KU[1/2]}S^0) \rightarrow \pi_{-2}(L_{KU[1/2]}S^0) \cong (\mathbb{Q}/\mathbb{Z})[1/2]$ is a perfect pairing. In particular, for positive $n$, $\pi_{-1-2n}(L_{KU[1/2]}S^0)$ is also cyclic of order equal to the denominator of $\zeta(1-n)$, up to a power of $2$.
\item For each $n$, the multiplication map $\pi_n(L_{KU[1/2]}S^0) \times \pi_{j-n}(L_{KU[1/2]}S^0) \rightarrow \pi_{j}(L_{KU[1/2]}S^0)$ is zero unless $j=-2$ or $j=n$ or $n=0$.
\end{itemize}
\end{theorem}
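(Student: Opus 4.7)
The natural approach exploits the description of the $KU[1/2]$-local sphere as the fiber of Adams operations on $K$-theory. At each odd prime $p$, one has the Adams--Baird fiber sequence
\[
L_{K(1)}S^0 \longrightarrow KU_p^\wedge \xrightarrow{\psi^g - 1} KU_p^\wedge,
\]
with $g$ any integer mapping to a topological generator of $\mathbb{Z}_p^\times$, and assembling these with the rationalization $L_\mathbb{Q}S^0 \simeq H\mathbb{Q}$ via the arithmetic fracture square recovers $L_{KU[1/2]}S^0$. Unwinding the long exact sequence and using $\pi_*(KU_p^\wedge) = \mathbb{Z}_p[u^{\pm 1}]$ with $\psi^g$ acting by $g^n$ on $\pi_{2n}$, one identifies the $p$-primary part of $\pi_{2n-1}(L_{K(1)}S^0)$ with $\mathbb{Z}_p/(g^n - 1)$ for $n \neq 0$ and obtains $\pi_{2n}(L_{K(1)}S^0) = 0$ for $n > 0$; the fracture square then glues the $n = 0$ kernels with $\mathbb{Q}$ to produce $\pi_0(L_{KU[1/2]}S^0) \cong \mathbb{Z}[1/2]$ and relocates the divisible pieces that appear in degree $-1$ at each prime into a single copy of $(\mathbb{Q}/\mathbb{Z})[1/2]$ in degree $-2$, yielding $\pi_{-1} = 0$.

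The key arithmetic input is the identity
\[
v_p\bigl(g^n - 1\bigr) \;=\; v_p\bigl(\text{denominator of } \zeta(1-n)\bigr)
\]
for $p$ odd and $g$ a topological generator of $\mathbb{Z}_p^\times$. This combines the elementary lifting-the-exponent calculation, giving $v_p(g^n - 1) = 1 + v_p(n)$ when $(p-1) \mid n$ and $0$ otherwise, with the fact that $\zeta(1-n)$ is $p$-integral precisely when $(p-1) \nmid n$, with matching pole order in the other case. Summed over odd primes, this matches the orders of $\pi_{2n-1}(L_{KU[1/2]}S^0)$ with the odd parts of the Bernoulli denominators, as claimed; cyclicity is immediate from the construction.

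For the pairing and multiplicativity, I would invoke Anderson self-duality of $L_{KU[1/2]}S^0$, which follows formally by Pontryagin-dualizing the Adams-operation fiber sequence and identifying the dualizing spectrum as a shift of the divisible summand in degree $-2$: this yields the perfect pairing into $\pi_{-2}$, and the vanishing of multiplications landing outside degrees $-2$, $n$, or $0$ follows by noting that the nonzero torsion classes arise as Bockstein-type images of the connecting map and square to zero for degree reasons. I expect the main obstacle to be the arithmetic identity above: matching the orders cleanly requires the $p$-integrality of $\zeta$-values at negative integers, which is genuinely deeper than the elementary form of the von Staudt--Clausen theorem and needs Kummer-congruence-type input, particularly to reconcile the contribution of irregular primes with the clean cyclic structure predicted on the topological side.
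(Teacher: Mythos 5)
This theorem is not proved in the paper at all: it is recalled from the literature (Adams--Baird, and Theorem 8.10 of Ravenel's \emph{Localization with respect to certain periodic homology theories}), so there is no in-paper argument to compare yours against. Your sketch is essentially the standard proof from that literature --- the fiber sequence $L_{K(1)}S^0 \to KU_p^{\wedge} \xrightarrow{\psi^g-1} KU_p^{\wedge}$ at each odd prime, arithmetic fracture against the rational sphere, and the image-of-$J$ identity $v_p(g^n-1) = v_p$ of the denominator of $\zeta(1-n)$ --- and it is correct in outline; no step of it fails.

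Three points worth tightening. First, your concern about irregular primes is a red herring: irregular primes divide \emph{numerators} of $\zeta(1-n)$, whereas the identity you need, namely $v_p$ of the denominator of $B_n/n$ equals $1+v_p(n)$ when $(p-1)\mid n$ and $0$ otherwise, is the von Staudt--Clausen theorem together with the Kummer-congruence refinement (as in Adams's $J(X)$ papers) and holds with no regularity hypothesis; there is nothing to reconcile on the topological side. Second, the assembly over all odd primes simultaneously deserves one explicit sentence: $\pi_{-1}$ of the product of the $K(1)$-local spheres is $\prod_{p\neq 2}\mathbb{Z}_p$, and the copy of $(\mathbb{Q}/\mathbb{Z})[1/2]$ in $\pi_{-2}$ and the identifications $\pi_{-1}=0$, $\pi_0=\mathbb{Z}[1/2]$ all come out of the Mayer--Vietoris sequence of the fracture square via the adelic facts that $\mathbb{Q}+\prod_p\mathbb{Z}_p$ exhausts $\bigl(\prod_p\mathbb{Z}_p\bigr)\otimes\mathbb{Q}$ with intersection $\mathbb{Z}[1/2]$ and that the quotient of the finite adeles by $\prod_p\mathbb{Z}_p$ is $\bigoplus_p \mathbb{Q}_p/\mathbb{Z}_p$. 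Third, the perfect-pairing clause is a genuine duality theorem ($K(1)$-local Anderson/Brown--Comenetz self-duality of the sphere at odd primes, up to shift) rather than a purely formal consequence of dualizing the fiber sequence, and in the degrees where the homotopy is not finite ($\pi_0$ and $\pi_{-2}$) the word ``perfect'' has to be interpreted with some care; by contrast, the final vanishing clause needs no duality at all --- products of torsion classes land in even degrees, which vanish outside $j=-2$ and $j=0$, torsion dies in $\pi_0\cong\mathbb{Z}[1/2]$, and any bilinear pairing of the divisible group $\pi_{-2}$ against a finite group is zero.
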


It is appealing to be able to describe the orders of Bousfield-localized stable homotopy groups of a finite CW-complex in terms of special values of a zeta-function of some kind. Very few results of this kind are already known\footnote{On the other hand, Bousfield-localized stable homotopy groups of {\em algebraic $K$-theory spectra} are well-known to admit deep relationships to special values of zeta-functions: see Example 4.8 of \cite{MR826102}, for example. But even after a Bousfield localization, algebraic $K$-theory spectra are almost never finite CW-complexes (except in the case of the algebraic $K$-theory of a finite field). We are compelled, by the classical topological applications of stable homotopy groups, to try to understand the stable homotopy groups of finite CW-complexes, and most importantly, spheres: for example, the stable homotopy groups of spheres are the attaching maps for stable $2$-cell complexes, so to have any hope of solving the fundamental topological problem of classifying the homotopy types of finite CW-complexes, one must determine the stable homotopy groups of spheres! To classify the stable homotopy types of the $3$-cell complexes whose $2$-skeleton is a fixed $2$-cell complex $X$, this task amounts to calculating the stable homotopy groups of $X$; and so on. The point is that calculating stable homotopy groups of finite CW-complexes is of fundamental importance. See \cref{The broader program} for further discussion of topological applications of stable homotopy groups of Bousfield-localized finite CW-complexes.}: the only other example is \cite{rmjpaper}, where for odd primes $p$, it was shown that the orders of the $KU$-local stable homotopy groups of the mod $p$ Moore spectrum $S^0/p$ are given by denominators of special values of $\zeta_F(s)/\zeta(s)$, where $\zeta(s)$ is the Riemann zeta-function, and $\zeta_F(s)$ is the Dedekind zeta-function of the smallest subfield $F$ of the cyclotomic field $\mathbb{Q}(\zeta_{p^2})$ in which $p$ is wildly ramified.

In this paper we prove that the orders of the $KU$-local stable homotopy groups of a much wider class of finite CW-complexes are given by special values of certain zeta-functions. The most important definition is Definition \ref{def of ku-local l and zeta}, in which we define a {\em $KU$-local zeta-function}, written $\zeta_{KU}(s,X)$, for every finite CW-complex $X$ such that
\begin{itemize}
\item
 the complex $K$-theory $KU^*(X)$ is concentrated in even degrees, and 
\item
 the order of the torsion subgroup of $KU^0(X)$ is square-free. 
\end{itemize}
The zeta-function $\zeta_{KU}(s,X)$ is defined as a product of two factors, a ``provisional $KU$-local zeta-function'' $\dot{\zeta}_{KU}(s,X)$, and a ``torsion $KU$-local $L$-function'' $L_{\tors KU}(s,X)$:
\begin{align}
\nonumber \zeta_{KU}(s,X) &= \dot{\zeta}_{KU}(s,X)\cdot L_{\tors KU}(s,X),\mbox{\ \ \ where} \\
\label{zeta eq dot} \dot{\zeta}_{KU}(s,X) &= \prod_{p} \frac{1}{\det\left(\id - p^{-s}\Psi^p\mid_{\hat{KU}_{\ell_p}^n(X)}\right)},\mbox{\ \ \ and} \\
\label{l eq} L_{\tors KU}(s,X) &= \prod_{w\in \mathbb{Z}}\prod_{\rho_w}\prod_{p} \frac{1}{\det\left(\id - p^{w-s}\Psi_{\rho_w,p,i_w}(p)\right)}.
\end{align}
The Euler products \eqref{zeta eq dot} and \eqref{l eq} are understood to be valid only in a suitable right-hand half-plane. The former, \eqref{zeta eq dot}, is a straightforward Euler product of characteristic polynomials of Adams operations acting on $K$-theory of $X$ completed at a prime at which the $K$-theory is torsion-free. By contrast, the Euler product \eqref{l eq} is a product of characteristic polynomials of Adams operations acting on the torsion in the $K$-theory of $X$, taken not merely over prime numbers $p$, but also {\em prime representations} $\rho$ of the torsion subgroup $\tors KU^0(X)$ of the $K$-theory of $X$; in this paper, a complex representation is said to be {\em prime} if it is induced up from a subgroup of prime order. See \cref{section on even and odd} for details.

In Proposition \ref{definedness of tors zeta}, $\zeta_{KU}(s,X)$ is shown to analytically continue to a meromorphic function on the complex plane. Proposition \ref{definedness of tors zeta} also yields a factorization of $\zeta_{KU}(s,X)$ as a product of shifts (i.e., Tate twists) of $L$-functions of even Dirichlet characters. Its functional equation is discussed in \cref{Functional equations for tors}. In Example \ref{cellular variety example} we also see that, for a smooth projective cellular variety $V$ over $\mathbb{Q}$ whose complex analytic space $\mathbb{C}(V)$ has rational cohomology concentrated in even degrees, the provisional $KU$-local zeta-function $\dot{\zeta}_{KU}(s,\mathbb{C}(V))$ of the {\em space} $\mathbb{C}(V)$ recovers the Hasse-Weil zeta-function $\zeta_V(s)$ of the {\em variety} $V$. 

The main result in \cref{section on even and odd}, and in this paper, is a formula for $KU$-local stable homotopy groups of $X$ in terms of special values of $\zeta_{KU}(s,X)$ at negative integers. To state the result, we must explain the factorization of $\zeta_{KU}(s,X)$ into {\em isoweight factors.} The factor $\dot{\zeta}_{KU}(s,X)$ of $\zeta_{KU}(s,X)$, which is sensitive to the {\em rational} $K$-theory of $X$, splits as a product $\prod_{w\in \mathbb{Z}} \dot{\zeta}^{(w)}_{KU}(s,X)$ of weight $w$ factors, one for each integer $w$. Similarly, the factor $L_{\tors KU}(s,X)$ of $\zeta_{KU}(s,X)$, which is sensitive to the {\em torsion} in the $K$-theory of $X$, splits as a product $\prod_{w\in \mathbb{Z}} \prod_{\ell\mid n_w} L_{\tors KU}^{(w,\ell)}(s,X)$ of weight $(w,\ell)$ factors, where $\ell$ is an odd prime divisor of $n_w$, the order of the $2w$th filtration quotient in the skeletal filtration of the torsion subgroup of $KU^0(X)$. We now state Theorem \ref{main spec vals thm 2} and Corollary \ref{main spec vals thm 2 cor}, which identify orders of $KU$-local stable homotopy groups in terms of denominators of special values of these ``isoweight'' factors:
\begin{unnumberedtheorem}
Let $X$ be a finite CW-complex whose complex $K$-theory $KU^*(X)$ is concentrated in even degrees, and such that the torsion subgroup of $KU^0(X)$ has square-free order.
Let $a,b$ be the least and greatest integers $n$, respectively, such that $H^{2n}(X;\mathbb{Z})$ is nontrivial.
Then the following conditions are equivalent:
\begin{enumerate}
\item The filtration of $\tors KU^0(X)$ by the skeleta of the CW-complex $X$ splits completely, i.e., each of the projections $(\tors KU^0(X^n))^{\wedge}_p\rightarrow (\tors KU^0(X^{n-1}))^{\wedge}_p$ is a split surjection of $\hat{\mathbb{Z}}_p[\hat{\mathbb{Z}}_p^{\times}]$-modules for each odd prime $p$. Here $\hat{\mathbb{Z}}_p^{\times}$ acts on $p$-complete $K$-theory via Adams operations.
\item 
For all odd integers $2k-1$ satisfying $2k-1> 1-2a$, the $KU$-local stable homotopy group $\pi_{2k-1}(L_{KU}DX)$ of the Spanier-Whitehead dual $DX$ of $X$ is finite, and up to powers of $2$, its order is equal to the product
\begin{equation*}
 \prod_{w\in\mathbb{Z}}\left( \denom\left(\dot{\zeta}^{(w)}_{KU}(1-k,X)\right)\cdot \prod_{\ell\mid n_w}\denom\left( L^{(w,\ell)}_{\tors KU}(1-k,X)\right)\right),\end{equation*}
up to powers of $2$.
\item 
For all odd integers $2k-1$ satisfying $2k-1< -2b-3$, the $KU$-local stable homotopy group $\pi_{2k-1}(L_{KU}DX)$ is finite of order 
\begin{equation*}
\prod_{w\in\mathbb{Z}}\left( \denom\left(\dot{\zeta}^{(w)}_{KU}(k+1,X)\right)\cdot \prod_{\ell\mid n_w}\denom\left( L^{(w,\ell)}_{\tors KU}(k+1,X)\right)\right)\end{equation*}
up to powers of $2$.
\end{enumerate}
\end{unnumberedtheorem}
\begin{unnumberedcorollary}
Suppose that the the skeletal filtration of $\tors KU^0(X)$ splits completely. Write $N$ for the order of the group $\tors KU^0(X)$.
Then, for all odd integers $2k-1\geq 1-2a$, the order of $\pi_{2k-1}(L_{KU}DX)$ is equal to the denominator of $\zeta_{KU}(1-k,X)$ up to powers of $2$ and powers of $F$-irregular primes, where $F$ ranges across the wildly ramified subfields of the cyclotomic field $\mathbb{Q}\left(\zeta_{N^2}\right)$.
\end{unnumberedcorollary}
``$F$-irregular primes'' are studied in algebraic number theory, e.g. in \cite{MR0332730} and \cite{MR1138169}. The $\mathbb{Q}$-irregular primes are merely the ordinary irregular primes. A brief definition of the $F$-irregular primes is given preceding Corollary \ref{main spec vals thm 2 cor}.

Theorem \ref{main spec vals thm 2} and Corollary \ref{main spec vals thm 2 cor} demonstrate that the Adams-Baird-Ravenel calculation, Theorem \ref{ravenel calc}, is not an isolated phenomenon, but in fact, $KU$-local stable homotopy groups of finite CW-complexes are quite commonly expressible in terms of special values of zeta-functions which are
\begin{itemize}
\item constructed a natural way (mimicking Hasse-Weil zeta-functions---see below),
\item have good properties (e.g. analytic continuation due to Proposition \ref{definedness of tors zeta}, functional equations due to Theorem \ref{func eq 1a}  and the discussion in \cref{Functional equations for tors}),
\item and are directly connected to number theory (since they are nontrivially equal to products of shifts of $L$-functions of Dirichlet characters, by Proposition \ref{definedness of tors zeta}) and arithmetic geometry (since $\zeta_V(s) = \dot{\zeta}_{KU}(s,\mathbb{C}(V))$ for many cellular varieties $V$).
\end{itemize}

We remark that the definitions and results of \cref{section on even and odd} are more general than what is stated here in the introduction. In \cref{section on even and odd}, we allow for a set of primes $P$ at which $KU^*(X)$ is {\em not} necessarily concentrated in even degrees, and has torsion subgroup which does not necessarily have square-free order. We define an ``away-from-$P$'' $KU$-local zeta-function $\zeta_{KU[P^{-1}]}(s,X)$ such that the denominators of the special values of $\zeta_{KU[P^{-1}]}(s,X)$ at negative integers are related to the orders of the $KU$-local stable homotopy groups of the Spanier-Whitehead dual of $X$, {\em up to powers of $2$ and primes in $P$}. Consequently, the methods and results of \cref{section on even and odd} can be applied to {\em all} finite CW-complexes.

In this paper, the exposition is oriented toward building up the ``$KU$-local zeta functions'' of finite CW-complexes in an incremental way: in \cref{The provisional...}, we begin with the Hasse-Weil zeta-function of a smooth projective variety over $\mathbb{Q}$, and we make the simple change of replacing the Weil cohomology with complex $K$-theory, and replacing the $p$-Frobenius operator on the Weil cohomology with the $p$th Adams operator on $K$-theory. This yields the ``provisional $KU$-local zeta-function'' $\dot{\zeta}_{KU}(s,X)$ which, as mentioned above, is sensitive only to information visible to the rational $K$-theory of $X$. From there, we make amendments and improvements to the provisional $KU$-local zeta-function, eventually arriving at the (non-``provisional'') $KU$-local zeta-function $\zeta_{KU}(s,X)$ in \cref{section on even and odd}. The author hopes that this step-by-step method of exposition makes the definitions seem more natural, makes the motivations more obvious, and makes it more satisfying when we find (in Theorems \ref{main spec vals thm} and \ref{main spec vals thm 2}) that the special values of these zeta-functions count orders of $KU$-local stable homotopy groups.

The subject matter of this paper has significant overlap with that of \cite{MR4384614}, but the constructions and results in this paper are quite different from those of \cite{MR4384614}, and the aim of \cite{MR4384614} is the opposite of what this paper sets out to do. In \cite{MR4384614}, Zhang begins with a ($p$-adic) Dirichlet character $\chi$, and from that character, constructs a $KU$-local spectrum whose homotopy groups are describable in terms of the denominators of the $L$-values of $\chi$ at negative integers. However, most $KU$-localizations of finite CW-complexes---e.g. the $KU$-localization of any spectrum whose rational homotopy groups have rank $>1$---do not arise from a character in this way. By contrast, in the present paper, we begin with a finite CW-complex, and from it, we build a zeta-function (which, in the end, will always be equal to a product of shifts of Dirichlet $L$-functions, but this is nontrivial!) such that the denominators of its $L$-values recover the orders of the $KU$-local homotopy groups of $X$.  So the ``dictionary'' we construct goes in the reverse direction from that of \cite{MR4384614}. Having ``dictionaries'' in both directions is worthwhile, and we hope the reader agrees that this paper and \cite{MR4384614} complement each other.

\subsection{The broader program.}
\label{The broader program}

Given a generalized homology theory $E_*$ and a spectrum $X$, by \cite{MR551009} there exists a {\em Bousfield localization of $X$ at $E_*$}, written $L_EX$. The spectrum $L_EX$ is defined by a certain universal property; see section 1 of \cite{MR737778} for a very approachable introduction to $L_E$ and its basic properties. In the special case $E_*$ is $\pi_*(-)[P^{-1}]$, i.e., stable homotopy groups with some set $P$ of prime numbers inverted, the effect of the Bousfield localization $L_EX$ on stable homotopy groups is merely to invert the primes in $P$. That is,
 $\pi_*\left(L_{\pi_*(-)[P^{-1}]}X\right) \cong \pi_*(X)[P^{-1}].$ 
In that sense, Bousfield localization generalizes the classical theory of localization in algebra, by which one inverts a set of primes. 

However, there are many more generalized homology theories $E_*$ than those of the form $\pi_*(-)[P^{-1}]$ for a set of primes $P$. For many choices of $E_*$, the relationship between $\pi_*X$ and $\pi_*L_EX$ is more mysterious and subtle than any classical algebraic localization, but yields data of great topological importance.

Here is the most pressing class of examples. For each prime $p$ and each nonnegative integer $n$, there exists a generalized homology theory $E(n)_*$, the {\em $p$-primary height $n$ Johnson-Wilson theory}. In the base case $n=0$, $E(0)_*$ is merely classical rational homology, $H_*(-;\mathbb{Q})$, regardless of the prime $p$. On the other hand, for all positive integers $n$, the generalized homology theory $E(n)_*$ depends on the choice of prime number $p$, but the choice of $p$ is traditionally suppressed from the notation $E(n)_*$.

In the case $n=1$, the generalized homology theory $E(1)_*$ is the Adams summand of $p$-local complex $K$-theory, and consequently we have isomorphisms\footnote{To be clear: the expression $\pi_*(L_{KU}X)_{(p)}$ in \eqref{isos 1} means the localization, in the classical sense, of the graded abelian group $\pi_*(L_{KU}X)$ at the prime $p$.}
\begin{equation}\label{isos 1} \pi_*(L_{E(1)}X) \simeq \pi_*(L_{KU_{(p)}}X) \simeq \pi_*(L_{KU}X)_{(p)}.\end{equation}
The $KU$-local and $E(1)$-local stable homotopy groups have been studied systematically in \cite{MR0796907} and \cite{MR1075335}. They have been useful: Thomason \cite{MR826102} showed that $KU$-local stable homotopy groups of certain algebraic $K$-theory spectra agree with the \'{e}tale $K$-theory groups of \cite{MR0648533} and \cite{MR0805962}, and used this fact to great effect in calculations.

The $E(n)$-localizations $L_{E(n)}X$ for $n>1$ are less familiar, but also important.
Fix a prime $p$ and a finite CW-complex $X$. The {\em chromatic tower} is a tower of spectra
\[ \dots \rightarrow L_{E(2)}X\rightarrow L_{E(1)} X\rightarrow L_{E(0)}X\]
whose homotopy limit is weakly equivalent \cite[Theorem 7.5.7]{MR1192553} to the $p$-localization of $X$. The calculation of the stable homotopy groups of each individual stage $L_{E(n)}X$ is, at least in principle, approachable by a sequence of spectral sequences and homotopy fiber squares (``fracture squares'') which begins with the continuous cohomology of the profinite automorphism group of a formal group law of height $h$ over a finite field \cite{MR2030586}, for each $h\leq n$.

In the base case, $X=S^0$, the chromatic tower plays a central role in many algebraic topologists' understanding of the stable homotopy groups of spheres, i.e., the ``stable stems'': the $p$-local stable stems are recoverable from the infinite sequence $\dots\rightarrow \pi_*L_{E(2)}S^0 \rightarrow \pi_*L_{E(1)}S^0 \rightarrow \pi_*L_{E(0)}S^0$, and each stage in the sequence is, at least in principle, calculable starting from certain cohomology calculations arising from formal group laws.  

For $n\geq 2$, the stable homotopy groups of $L_{E(n)}X$ have generally been so complicated that the outcome of making a long and difficult calculation of $\pi_*L_{E(n)}X$ is a theorem whose {\em statement}---let alone the proof!---is prohibitively long and complicated; see for example \cite{MR2914955} and the discussion there of the pioneering calculations of \cite{MR1318877}. It puts the subject in a difficult position when the outcomes of deep and important fundamental calculations are theorems which are extremely cumbersome to state, even to an audience of experts.

The description of $\pi_*(L_{E(1)}S^0)$ in terms of special values of $\zeta(s)$ (i.e., the $p$-local version of Theorem \ref{ravenel calc}) suggests a way forward: {\em rather than attempt to describe $\pi_*(L_{E(n)}X)$ in elementary terms for each $m$, we might write down a description of the order of $\pi_m(L_{E(n)}X)$, or of various natural summands of $\pi_m(L_{E(n)}X)$, in terms of special values of various $L$-functions or zeta-functions}. While $\pi_m(L_{E(n)}X)$ would remain a somewhat mysterious object, at least the mystery would be fruitfully identified with some other compelling and well-studied mystery. 
Theorem \ref{ravenel calc} demonstrates that even in the case $n=1$ and $X=S^0$, this approach already yields rewards, in the form of shorter and more natural {\em statements} of theorems.

The main theorem of this paper, Theorem \ref{main spec vals thm 2}, demonstrates that in the case $n=1$, there is a simple and natural special-values description of $\pi_*(L_{KU[1/2]}X)$ for a wide range of finite CW-complexes $X$, not merely the case $X=S^0$ demonstrated by Adams-Baird and Ravenel, and not merely the case $X = S^0/p$ demonstrated in \cite{rmjpaper}. The author regards this as incremental progress toward more fully realizing the perspective described in the previous paragraph. 

\subsection{The intended audience and scope of this paper.}

The author has made an effort to write this paper so that it can be read by algebraic topologists who do not already know a lot about zeta-functions, and also number theorists who do not already know a lot about stable homotopy theory. As a consequence, sometimes a notion is explained which is very elementary and well-known to one audience, but not to the other. The author apologizes to any reader who finds this annoying.

Beginning in \cref{section on even and odd}, the author assumes that the reader is comfortable with basic notions about Dirichlet characters, e.g. primitivity and conductors. Such material is covered in many textbooks, like \cite{MR0434929}, but section 3 of the recent paper \cite{rmjpaper} is intended to be a ``crash course'' in those ideas specifically suited for an audience of algebraic topologists, so perhaps that reference can be particularly useful to some readers.

\begin{remark}\label{remarkongenerality}
The results in this paper admit generalizations in several directions. One direction of generalization is to higher heights, i.e., describing $E$-local zeta-functions of finite CW-complexes, where $E$ is the spectrum representing a complex-orientable cohomology theory whose $p$-height is $>1$ for some primes $p$.  
However, that direction of generalization is entirely outside the scope of this paper: it is important to get the $KU$-local story right first, and that is what this paper tries to do.

Even within the $KU$-local story, in some places it is possible to generalize the results beyond what is described in this paper. For instance, beginning in \cref{Defining the... torsion...}, we restrict our attention to CW-complexes whose $K$-theory is concentrated in even degrees. This is not strictly necessary, but yields cleaner statements and proofs, and makes the ideas and their motivations more obvious. 
In this paper the author has chosen to prioritize simplicity and well-motivatedness of the constructions, rather than reaching for the very greatest generality.
\end{remark}

\begin{remark}
An alternative approach to producing $KU$-local $L$-functions and zeta-function of finite CW-complexes is to construct {\em $p$-adic} $L$-functions. Here is a sketch of the construction. One begins with a finite CW-complex $X$, splits the group $\hat{\mathbb{Z}}_p^{\times}$ of $p$-adic Adams operations as $\mathbb{F}_p^{\times}\times \hat{\mathbb{Z}}_p$, then considers the $\omega^i$-eigenspace $e_i\hat{KU}^0_p(X)$ of the action of a generator for $\mathbb{F}_p^{\times}$ on $\hat{KU}^0_p(X)$, where $\omega$ is a fixed primitive $(p-1)$th root of unity in $\hat{\mathbb{Z}}_p$. Using the structure theory for Iwasawa modules (see Theorem 13.12 of \cite{MR1421575} for a textbook account) and the finiteness of $X$, each $e_i\hat{KU}^0_p(X)$ is pseudo-isomorphic to a direct sum $\oplus_{j=1}^{d_i}\Lambda/f_{i,j}(T)$ of quotients of the Iwasawa algebra $\Lambda \cong \hat{\mathbb{Z}}_p[[T]]$ by characteristic polynomials $f_{i,1}(T), \dots ,f_{i,d_i}(T)$. If $d_i=1$ for all $i$, then we define $L_p(s,\omega^i) := f_i(q^{1-s}-1)$, with $q$ a principal unit in $\hat{\mathbb{Z}}_p^{\times}$ (see discussion preceding Theorem 11.6.7 of \cite{MR2392026} for a precise account), obtaining a set of $(p-1)$ $p$-adic $L$-functions of $X$; compare with Iwasawa's theorem $e_i(U/C)\sim \Lambda/f_i(T)$ as in Theorem 11.6.18 of \cite{MR2392026}. The resulting ``$KU$-local $p$-adic $L$-functions of finite CW-complexes'' have some good properties but are not guaranteed to be the $p$-adic interpolations of complex-analytic functions. The author and his student A. Maison are investigating the resulting theory of $p$-adic $L$-functions for finite CW-complexes, but we regard that theory as beyond the scope of this paper, which is concerned with the complex-analytic case instead.
\end{remark}

\begin{convention}
Throughout this paper, all our topological constructions on CW-complexes are {\em stable} constructions. Consequently, for the sake of the definitions and theorems in this paper, the reader is welcome to consider finite CW-{\em spectra} (i.e., arbitrary desuspensions of finite CW-complexes) as examples. For example, spheres of negative dimension are perfectly acceptable finite CW-complexes for the purposes of all the definitions and theorems in this paper! We also consistently write $\pi_*$ for {\em stable} homotopy groups. There are no unstable homotopy groups appearing in this paper.
\end{convention}

\section{The provisional $KU$-local zeta-function of a finite CW-complex.}
\label{The provisional...}

\subsection{Defining the provisional $KU$-local zeta-function.}
We begin with a cursory account of what global Hasse-Weil zeta-functions (henceforth simply called ``Hasse-Weil zeta-functions'') are. For a fuller story, Weil's original article \cite{MR0029393} is a very good starting place. 

When constructed via cohomology, the Hasse-Weil zeta-function of a smooth projective variety $X$ over $\mathbb{Q}$ begins as an Euler product over primes $p$ of good reduction for $X$:
\begin{equation}\label{hasse-weil def 1}\prod_{p\notin P} \prod_{n\geq 0} \det(\id - p^{-s}\Fr_p\mid_{H^n_{\et}(X/\overline{\mathbb{F}}_p;\mathbb{Q}_{\ell})})^{(-1)^{n+1}},\end{equation}
where $P$ is the set of primes of bad reduction, and 
where $H^n_{\et}(X/\overline{\mathbb{F}}_p;\mathbb{Q}_{\ell})$ is $\ell$-adic \'{e}tale cohomology theory, for a prime $\ell\neq p$, applied to $\overline{\mathbb{F}}_p$-points of an integral model for $X$. 
The notation $\Fr_p$ denotes the $p$th-power relative Frobenius operator acting on the \'{e}tale cohomology.
The Euler product \eqref{hasse-weil def 1} converges for complex numbers $s$ with $\mathfrak{Re}(s)>>0$, and is then (when all goes well, depending on $X$!) analytically continued to a meromorphic function on the complex plane. With a bit more trouble, one can also include appropriate $p$-local Euler factors at the primes $p\in P$ of bad reduction. Since the set $P$ is finite, such changes only affect finitely many Euler factors, hence do not affect many of the important analytic properties of the zeta-function. Special values of the resulting function $\zeta_X(s)$ are of deep interest in arithmetic geometry; for example, the strong form of the Birch-Swinnerton-Dyer conjecture predicts the behavior of $\zeta_E(s)$ at $s=1$, for $E$ an elliptic curve. 

The element $\Fr_p$ is a topological generator for the profinite Galois group $\Gal(\overline{\mathbb{F}}_p/\mathbb{F}_p)$. The definition \eqref{hasse-weil def 1} makes good sense because the $\ell$-adic cohomology of the variety $X$ has, at each prime $p\neq \ell$, a natural action of the topologically cyclic profinite group $\Gal(\overline{\mathbb{F}}_p/\mathbb{F}_p)$.

Described in that way, \'{e}tale cohomology resembles complex topological $K$-theory, $KU^*$. Recall that, for each prime number $\ell$, the $\ell$-adically complete complex topological $K$-theory $\hat{KU}_{\ell}$ admits an natural action of the profinite group $\hat{\mathbb{Z}}_{\ell}^{\times}$ of units in the $\ell$-adic integers. This is the action by {\em Adams operations,} introduced in \cite{MR0139178}. 

A comparison between Frobenius actions on $\ell$-adic \'{e}tale cohomology and Adams operations on $\ell$-adic $K$-theory is, of course, not a new idea at all: see Sullivan's paper \cite{MR0442930}, for example, or work of Quillen in \cite{MR0315016} establishing that, for $p\neq \ell$, the action of $\Psi^p$ on the $\ell$-completed complex $K$-theory spectrum $\hat{KU}_{\ell}$ agrees with the action of the $p$-Frobenius operator on the $\ell$-completed algebraic $K$-theory spectrum $\mathcal{K}(\overline{\mathbb{F}}_p)^{\hat{}}_{\ell}$ under the Suslin rigidity equivalence $\mathcal{K}(\overline{\mathbb{F}}_p)^{\hat{}}_{\ell}\simeq \hat{KU}_{\ell}$. In that sense, $\Psi^p$ really {\em is} the $p$-Frobenius operator. 

Our aim in this section is to take that idea seriously, by mimicking the construction of Hasse-Weil zeta-functions of varieties but using Adams operations in place of Frobenius operations, to yield some kind of Hasse-Weil-like zeta-functions of a {\em topological space} $X$ (not a variety!), and to derive useful topological invariants of $X$ from the special values of that zeta-function.
The idea is simple: write down the Euler product \eqref{hasse-weil def 1} for the Hasse-Weil zeta-function, but
\begin{itemize}
\item rather than $X$ being a smooth projective variety, let $X$ be a finite CW-complex,
\item for each prime $\ell$, replace $\ell$-adic \'{e}tale cohomology with $\ell$-adic complex topological $K$-theory,
\item for each prime $p$, replace the Frobenius operator $\Fr_p$ on $\ell$-adic \'{e}tale cohomology with the $p$th Adams operation $\Psi^p$ on $\ell$-adic complex topological $K$-theory,
\item and, since complex topological $K$-theory is $2$-periodic, rather than taking the Euler product over all degrees, we will merely take the Euler product over degrees $0$ and $1$.
\end{itemize}
Here is the resulting definition:
\begin{definition}\label{def of euler product}
Let $X$ be a finite CW-complex.
Let $P$ be the set of primes consisting of $2$ and all of the primes at which the cohomology of $X$ has $p$-torsion, i.e., 
\[ P = \{ 2\} \cup \left\{ p \mbox{\ prime}: H^*(X;\mathbb{Z})\mbox{\ has\ nontrivial\ $p$-torsion}\right\} .\]
For each prime number $p$, suppose we have chosen a prime number $\ell_p\notin P$ such that $p$ is a topological generator for the group of $\ell_p$-adic units $\hat{\mathbb{Z}}_{\ell_p}^{\times}$. 
Then the {\em $KU$-theoretic Euler product for $X$} is defined as the product over all primes $p$:
\begin{align}\label{euler product 1} 
\prod_{p} \prod_{n\in \{0,1\}} \det(\id - p^{-s}\Psi^p\mid_{\hat{KU}_{\ell_p}^n(X)})^{(-1)^{n+1}}
 &=\prod_{p} \frac{\det\left( \id - p^{-s}\Psi^p\mid_{\hat{KU}_{\ell_p}^1(X)}\right)}{\det\left( \id - p^{-s}\Psi^p\mid_{\hat{KU}_{\ell_p}^0(X)}\right)}.
\end{align}
\end{definition}
The prime $2$ will have to be avoided or excluded in various small ways later in this paper, simply due to the slightly different properties of $2$-local $K$-theory as compared to $K$-theory localized at an odd prime (compare Theorem 8.10 to Theorem 8.15 in \cite{MR737778}, for example). This happened in a small way in Definition \ref{def of euler product} by including $2$ in the set of primes $P$. Note that this did not cause any missing Euler factors in \eqref{euler product 1}: the $2$-local Euler factor is still present in \eqref{euler product 1}. The author suspects that the prime $2$ can be incorporated elegantly into the theory presented in this paper, by using methods along the lines of Bousfield's ``united $K$-theory'' \cite{MR1075335}. That extension of the theory goes beyond the scope of this paper, though.

In \eqref{euler product 1}, we chose to use $\ell_p$-adically completed $K$-theory to maintain the similarity with the Euler product \eqref{hasse-weil def 1} of a Hasse-Weil zeta-function. It would have worked just as well to use $KU[P^{-1},p^{-1}]^*$, that is, complex $K$-theory with:
\begin{itemize}
\item $p$ inverted, so that the stable Adams operation $\Psi^p$ is defined, and 
\item all the primes in $P$ inverted, so that $KU^*(X)[P^{-1},p^{-1}]$ is a torsion-free (hence free) $\mathbb{Z}[P^{-1},p^{-1}]$-module\footnote{If $KU^*(X)[P^{-1},p^{-1}]$ were not free, we would have to exercise a bit of care about what the determinant of $\Psi^p$ acting on $KU^n(X)[P^{-1},p^{-1}]$ ought to mean. We generalize and extend these ideas to handle torsion in $K$-theory starting in \cref{section on even and odd}.}.
\end{itemize}
This change would not affect the determinants in \eqref{hasse-weil def 1}, hence would not affect the resulting zeta-function or the theorems we prove about it, below.

\begin{remark}
The author would like to emphasize how na\"{i}ve the Euler product \eqref{euler product 1} is: it comes from blindly mimicking the Euler product of the Hasse-Weil zeta-function, and in the process, losing hold of any clear interpretation in terms of Lefschetz fixed-point theory. 
\end{remark}

\begin{lemma}\label{euler product of a wedge sum}
The $KU$-theoretic Euler product of a wedge sum $X\vee Y$ is equal to the product of the $KU$-theoretic Euler products of $X$ and of $Y$.
\end{lemma}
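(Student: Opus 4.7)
The plan is to exploit the wedge-sum splitting of $K$-theory. First I would observe that the inclusions $X\hookrightarrow X\vee Y\hookleftarrow Y$ together with the collapse maps $X\vee Y\twoheadrightarrow X$, $X\vee Y\twoheadrightarrow Y$ induce a natural direct-sum decomposition
\[ \hat{KU}_{\ell}^n(X\vee Y)\;\cong\; \hat{KU}_{\ell}^n(X)\oplus \hat{KU}_{\ell}^n(Y) \]
for every prime $\ell$. Naturality of the Adams operation $\Psi^p$ with respect to maps of spectra implies that $\Psi^p$ preserves this decomposition, so the restriction of $\Psi^p$ to $\hat{KU}_{\ell}^n(X\vee Y)$ is block-diagonal with respect to any basis adapted to the splitting, and hence so is $\id - p^{-s}\Psi^p$.

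Next I would check that the set $P$ of ``bad'' primes behaves correctly under wedges: because the reduced integral cohomology of a wedge splits as $\tilde{H}^*(X\vee Y;\mathbb{Z})\cong \tilde{H}^*(X;\mathbb{Z})\oplus \tilde{H}^*(Y;\mathbb{Z})$, one has $P_{X\vee Y}=P_X\cup P_Y$. Consequently any choice of auxiliary prime $\ell_p\notin P_{X\vee Y}$ is simultaneously admissible in the sense of Definition~\ref{def of euler product} for all three of $X$, $Y$, and $X\vee Y$, and we can fix such a compatible choice of $\ell_p$ throughout. This also guarantees that each of $\hat{KU}_{\ell_p}^n(X)$, $\hat{KU}_{\ell_p}^n(Y)$, and $\hat{KU}_{\ell_p}^n(X\vee Y)$ is a finitely generated free $\hat{\mathbb{Z}}_{\ell_p}$-module, so that every determinant appearing in the Euler product is well-defined.

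By multiplicativity of the determinant on block-diagonal endomorphisms,
\[ \det\!\left(\id - p^{-s}\Psi^p\mid_{\hat{KU}_{\ell_p}^n(X\vee Y)}\right)\;=\;\det\!\left(\id - p^{-s}\Psi^p\mid_{\hat{KU}_{\ell_p}^n(X)}\right)\cdot \det\!\left(\id - p^{-s}\Psi^p\mid_{\hat{KU}_{\ell_p}^n(Y)}\right). \]
Raising to the power $(-1)^{n+1}$, taking the product over $n\in\{0,1\}$, and then over all primes $p$, the $KU$-theoretic Euler product of $X\vee Y$ factors as the product of the Euler products of $X$ and $Y$, as required. No step presents a genuine obstacle; the only real thing to handle with care is the coordination of the auxiliary primes $\ell_p$ across the three CW-complexes, which is resolved by the identity $P_{X\vee Y}=P_X\cup P_Y$. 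Everything else is immediate from naturality of $\Psi^p$ and multiplicativity of $\det$.
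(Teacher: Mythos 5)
Your argument is correct and is exactly the elementary verification the paper has in mind (its own proof is just the word ``Elementary''): the wedge splitting of reduced $K$-theory, naturality of $\Psi^p$, and multiplicativity of determinants on block-diagonal operators. Your extra care in noting that $P_{X\vee Y}=P_X\cup P_Y$, so that the auxiliary primes $\ell_p$ can be chosen compatibly for $X$, $Y$, and $X\vee Y$, is a worthwhile detail the paper leaves implicit.
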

\begin{proof}
Elementary.
\end{proof}

Theorem \ref{thm on existence of zeta functions} establishes the basic properties of the $KU$-theoretic Euler product, which are all straightforward consequences of the basic properties of the Chern character:
\begin{theorem}\label{thm on existence of zeta functions}
Let $X$ be a finite CW-complex. Let $P$ be as in Definition \ref{def of euler product}.
Write $b$ for the greatest integer $n$ such that at least one of the two vector spaces $H^{2n}(X;\mathbb{Q}), H^{2n+1}(X;\mathbb{Q})$ is nontrivial.
Then the following claims are each true:
\begin{itemize} 
\item The $KU$-theoretic Euler product \eqref{euler product 1} converges absolutely for all complex numbers $s$ with $\mathfrak{Re}(s) > 1+b$.
\item The $KU$-theoretic Euler product \eqref{euler product 1} analytically continues to a meromorphic function $\dot{\zeta}_{KU}(s,X)$ on the complex plane. 
\item The meromorphic function $\dot{\zeta}_{KU}(s,X)$ is equal to the $L$-function of a cellular motive. That is, $\dot{\zeta}_{KU}(s,X)$ is equal to a product of ``shifts'' of the Riemann zeta-function $\zeta(s)$. The product is as follows:
\begin{align}\label{factorization 1}
  \dot{\zeta}_{KU}(s,X) &= \prod_{w\in\mathbb{Z}}\zeta(s-w)^{\beta_{2w}(X) - \beta_{2w+1}(X)},
\end{align}
where $\beta_n(X) = \dim_{\mathbb{Q}}H^{n}(X;\mathbb{Q})$ is the $n$th Betti number of $X$.
\item Some of the poles and some of the zeroes of $\dot{\zeta}_{KU}(s,X)$ in the complex plane occur at integers. The order of vanishing of $\dot{\zeta}_{KU}(s,X)$ at an integer $s=m$ is equal to 
\begin{equation}\label{order of poles 1} -\beta_{2m-2}(X) + \beta_{2m-1}(X) + \sum_{k\geq 1} \left( \beta_{2m+4k}(X) - \beta_{2m+4k+1}(X)\right).\end{equation}
If the rational cohomology of $X$ is concentrated in even degrees, then {\em all} poles of $\dot{\zeta}_{KU}(s,X)$ occur at integers, and their orders are calculated by formula \eqref{order of poles 1}.
\end{itemize}
\end{theorem}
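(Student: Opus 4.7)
The plan is to reduce the $K$-theoretic Euler product to a product of shifted Riemann zeta-functions via the Chern character, and then to read off every conclusion from standard facts about $\zeta(s)$.

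First I would analyze the local Euler factor at a fixed prime $p$. By hypothesis $\ell_p \notin P$, so $H^*(X;\mathbb{Z})$ has no $\ell_p$-torsion and the Atiyah--Hirzebruch spectral sequence for $\hat{KU}_{\ell_p}^*(X)$ collapses with no extension problems; thus $\hat{KU}^n_{\ell_p}(X)$ is free over $\hat{\mathbb{Z}}_{\ell_p}$ of rank $\sum_w \beta_{2w+n}(X)$. Rationalizing and applying the Chern character gives an isomorphism
\[ \ch: \hat{KU}^n_{\ell_p}(X) \otimes_{\hat{\mathbb{Z}}_{\ell_p}} \mathbb{Q}_{\ell_p} \;\xrightarrow{\;\cong\;}\; \bigoplus_{w\in\mathbb{Z}} H^{n+2w}(X;\mathbb{Q}_{\ell_p}), \]
under which $\Psi^p$ acts as multiplication by $p^w$ on the weight-$w$ summand. (This is standard: by the splitting principle it suffices to check on line bundles, where $\Psi^p$ is the $p$th tensor power and hence multiplies first Chern classes by $p$.) Consequently the characteristic polynomial of $\Psi^p$ on $\hat{KU}^n_{\ell_p}(X)$ is
\[ \prod_{w\in\mathbb{Z}}(T - p^w)^{\beta_{n+2w}(X)}, \]
a polynomial with integer coefficients that is visibly independent of the auxiliary choice of $\ell_p$.

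Substituting this into \eqref{euler product 1}, the local factor at $p$ becomes
\[ \prod_{w\in\mathbb{Z}} (1 - p^{w-s})^{\beta_{2w+1}(X)-\beta_{2w}(X)}, \]
and the product over all primes $p$ is formally
\[ \prod_{w\in\mathbb{Z}} \zeta(s-w)^{\beta_{2w}(X) - \beta_{2w+1}(X)}, \]
which is the factorization \eqref{factorization 1}. Since $X$ is finite, only finitely many $w$ contribute a nonzero exponent. The first two bullets of the theorem now follow: absolute convergence on $\mathfrak{Re}(s) > 1+b$ comes from absolute convergence of each $\zeta(s-w)$ on $\mathfrak{Re}(s) > w+1$ together with $w \leq b$, and meromorphic continuation is inherited from $\zeta$.

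Finally, the orders-at-integers statement is a mechanical consequence of the pole/zero structure of $\zeta(s)$ (simple pole at $s=1$, simple zeros at $s = -2, -4, \dots$, and nothing else at integers). The pole of $\zeta(s-w)$ at $s = w+1$ contributes $\beta_{2m-1}(X) - \beta_{2m-2}(X)$ to the order at $s=m$ (taking $w=m-1$); the simple zero of $\zeta(s-w)$ at $s=w-2k$ for $k \geq 1$ contributes $\beta_{2m+4k}(X) - \beta_{2m+4k+1}(X)$ (taking $w=m+2k$). Summing yields \eqref{order of poles 1}. If rational cohomology is concentrated in even degrees then each exponent $\beta_{2w}(X)-\beta_{2w+1}(X) = \beta_{2w}(X)$ is nonnegative, so no poles arise except from the simple poles of the individual factors $\zeta(s-w)$, each at the integer $s=w+1$. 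The genuine content is the Chern-character diagonalization of $\Psi^p$; the hardest (but still minor) technical point is checking that the characteristic polynomial is independent of $\ell_p$, which is immediate since its eigenvalues $p^w$ and multiplicities $\beta_{n+2w}(X)$ are.
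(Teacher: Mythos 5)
Your proposal is correct and follows essentially the same route as the paper: both arguments diagonalize $\Psi^p$ rationally with eigenvalues $p^w$ of multiplicity $\beta_{2w+n}(X)$ (the paper does this by splitting $X$ rationally into a wedge of spheres via rationally trivial attaching maps, which it notes is "essentially just the statement of what the Chern character does to Adams operations"), reduce the Euler product to $\prod_w \zeta(s-w)^{\beta_{2w}-\beta_{2w+1}}$, and read off convergence, continuation, and the pole/zero orders from the standard properties of $\zeta(s)$. Your bookkeeping for \eqref{order of poles 1} matches the paper's.
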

\begin{proof}

Of course the determinant of the linear operator $\id - p^{-s}\Psi^p\mid_{(\hat{KU}_{\ell_p})^n(X)}$ acting on the free $\hat{\mathbb{Z}}_{\ell_p}$-module $(\hat{KU}_{\ell_p})^n(X)$ is unchanged by first passing to the fraction field $\mathbb{Q}_{\ell_p}$ of $\hat{\mathbb{Z}}_{\ell_p}$. However, since all the attaching maps in a minimal $S[P^{-1}]$-cell decomposition for $X[P^{-1}]$ are rationally trivial, the action of the Adams operation $\Psi^p$ on the rationalized $\ell_p$-adic $K$-groups 
\begin{align*}
  \mathbb{Q}\otimes_{\mathbb{Z}}(\hat{KU}_{\ell_p})^n(X) 
   &\cong (\ell_p^{-1}\hat{KU}_{\ell_p})^n(X)
\end{align*}
agrees with the Adams operations on the rationalized $\ell_p$-adic $K$-groups of a wedge of spheres. We have Adams-operation-preserving isomorphisms
\begin{align*} 
 (\ell_p^{-1}\hat{KU}_{\ell_p})^0(X) &\cong \coprod_{n\in\mathbb{Z}} \ell_p^{-1}(\hat{KU}_{\ell_p})^0\left(S^{2n}(H^{2n}(X;\mathbb{Q}))\right)\mbox{\ \ and}\\
 (\ell_p^{-1}\hat{KU}_{\ell_p})^1(X) &\cong \coprod_{n\in\mathbb{Z}} \ell_p^{-1}(\hat{KU}_{\ell_p})^1\left(S^{2n+1}(H^{2n+1}(X;\mathbb{Q}))\right),\end{align*}
where $S^m(V)$ denotes the wedge of spheres whose rational homology in degree $m$ is $V$, and whose homology in all other degrees is trivial\footnote{This kind of observation is, of course, quite old: it is essentially just the statement of what the Chern character does to Adams operations, and it is why Bousfield imposes ``rational diagonalizability conditions'' on the objects of his categories $\mathcal{A}(p)$ and $\mathcal{B}(p)$ constructed in \cite{MR0796907}, for example.}. 

The point is that the $KU$-theoretic Euler product for $X$ agrees with the $KU$-theoretic Euler product for a wedge of spheres. The number of $n$-spheres in this wedge is equal to the $\mathbb{Q}$-linear dimension of $H^n(X;\mathbb{Q})$, i.e., $\beta_{n}$. By Lemma \ref{euler product of a wedge sum}, the $KU$-theoretic Euler product of $X$ is equal to the product of the $KU$-theoretic Euler products of each of the spheres in that wedge. The Adams operation $\Psi^p$ acts on $(\hat{KU}_{\ell_p})^0(S^{2n})\cong \hat{\mathbb{Z}}_{\ell_p}$ as multiplication by $p^n$. Hence the Euler factor for $S^{2n}$ at the prime $p$ is $\frac{1}{1 - p^{-s}p^n} = \frac{1}{1 - p^{n-s}}$, i.e., it is the Euler factor of $S^0$ with $s-n$ ``plugged in'' for $s$. Similarly, the Euler factor for $S^{2n+1}$ is $1-p^{n-s}$.
This yields that the Euler product for $X$ factors as 
$\prod_{w\in\mathbb{Z}}\prod_{p} \left( \frac{1}{1-p^{w-s}}\right)^{\beta_{2w}(X) - \beta_{2w+1}(X)}$,
which is precisely the Euler product for \eqref{factorization 1}. The third claim is now proven.

The product \eqref{factorization 1} converges absolutely for all complex numbers $s$ with $\mathfrak{Re}(s)>1+b$, since the Riemann zeta-function converges absolutely and is zero-free to the right of the line $\mathfrak{Re}(s)=1$. Each of the factors $\zeta(s-w)$ in \eqref{factorization 1} analytically continues to a meromorphic function on the complex plane, so the finite product \eqref{factorization 1} of those factors also analytically continues to the plane. By uniqueness of analytic continuation, the meromorphic function $\dot{\zeta}_{KU}(s,X)$ is well-defined. This proves the first and second claims.

The fourth claim, about the poles of $\dot{\zeta}_{KU}(s,X)$, follows from the fact that the only pole of the Riemann zeta-function in the complex plane occurs at $s=1$, and the fact that the only poles of $\frac{1}{\zeta(s)}$ at integers are the trivial zeroes of $\zeta(s)$, which are simple, and occur at all negative even integers. 
\end{proof}

Theorem \ref{thm on existence of zeta functions} suggests that we define the $KU$-local zeta-function of a finite CW-complex as the analytic continuation of the $KU$-theoretic Euler product \eqref{euler product 1}. This seems natural from the point of view of Hasse-Weil zeta-functions: after all, we were led to the $KU$-theoretic Euler product by a very simple analogy with the Hasse-Weil zeta-function. It is reasonable to study the analytic continuation described in Theorem \ref{thm on existence of zeta functions}, but when we reach \cref{section on even and odd}, we will have good reason to define the $KU$-local zeta-function of a finite CW-complex as a modification and refinement of that analytic continuation, one which pays attention to torsion in $K$-theory. For now, we will take the analytic continuation of \eqref{euler product 1} as a {\em provisional} version of the $KU$-local zeta-function:
\begin{definition}\label{def of provisional ku-local zeta}
Let $P$ be a finite set of primes, with $2\in P$.
Suppose that $X$ is a finite CW-complex.
We refer to the meromorphic function $\dot{\zeta}_{KU}(s,X)$ of Theorem \ref{thm on existence of zeta functions} as the {\em provisional $KU$-local zeta function of $X$.}
%
That is,
\begin{align}
\label{provisional zeta 1}  \dot{\zeta}_{KU}(s,X) &= \prod_{w\in\mathbb{Z}}\zeta(s-w)^{\beta_{2w}(X) - \beta_{2w+1}(X)}.
\end{align}

Finally, for an integer $w$, we write 
\begin{align*}
  \dot{\zeta}_{KU}^{(w)}(s,X) &= \zeta(s-w)^{\beta_{2w}(X) - \beta_{2w+1}(X)},
\end{align*}
and we refer to $ \dot{\zeta}_{KU}^{(w)}(s,X)$ as the {\em weight $w$ factor in $\dot{\zeta}_{KU}(s,X)$.} Any single such factor will be called an {\em isoweight} factor.
\end{definition}
The provisional $KU$-local zeta-function is clearly a very crude invariant of a finite CW-complex. If $X$ and $Y$ are finite CW-complexes whose rational cohomology is concentrated in even degrees (which will be the case of greatest interest for much of the rest of this paper), then $\dot{\zeta}_{KU}(s,X) = \dot{\zeta}_{KU}(s,Y)$ if and only if $X$ and $Y$ are {\em rationally} stably homotopy-equivalent. In other words, $\dot{\zeta}_{KU}(s,-)$ is really a $H\mathbb{Q}$-local invariant, not only a $KU$-local invariant. It is a peculiar fact that, on the finite CW-complexes whose cohomology is torsion-free and concentrated in even degrees, the orders of the $KU$-local stable homotopy groups are {\em also} a $H\mathbb{Q}$-local invariant, and in fact are recoverable from the special values of $\dot{\zeta}_{KU}(s,-)$: see Theorem \ref{thm on existence of zeta functions}, below.


\begin{example}\label{cellular variety example}
Here is a very simple example of a provisional $KU$-local zeta-function of a finite CW-complex. It is easy to use the methods in this section to see that, for any integer $n$, the complex projective space $\mathbb{C}P^n$ satisfies
\begin{align}
\label{eq f4a3} \dot{\zeta}_{KU}(s,\mathbb{C}P^n) &= \prod_{w=0}^n \zeta(s-w).
\end{align}
This is precisely the Hasse-Weil zeta-function of the projective space $P^n$ regarded as a variety over $\mathbb{Q}$. 

A trivial zero in one zeta-factor in \eqref{eq f4a3} may cancel with the pole in another zeta-factor, occasionally yielding amusing calculations like
\begin{align*}
 \dot{\zeta}_{KU}(1,\mathbb{C}P^3) 
  &= \zeta(1)\zeta(0)\zeta(-1)\zeta(-2) \\
  & = \frac{-\gamma\cdot \zeta(3)}{96\pi^2},
\end{align*}
where $\gamma$ is the Euler constant. This example demonstrates that, while $\zeta(s)$ has a pole at $s=1$, it is not always true that $\dot{\zeta}_{KU}(s,X)$ has a pole at $s=1$.


More generally, suppose $V$ is a smooth projective {\em cellular} variety over $\mathbb{Q}$ with associated complex analytic space $\mathbb{C}(V)$. Suppose that the cohomology $H^*(\mathbb{C}(V);\mathbb{Q})$ is concentrated in even degrees. Then the provisional $KU$-local zeta-function of the {\em space} $\mathbb{C}(V)$ recovers the Hasse-Weil zeta-function of the {\em variety} $V$:
\begin{align}
\label{eq 1203} \dot{\zeta}_{KU}\left(s,\mathbb{C}(V)\right) &= \zeta_V(s).
\end{align}
One cannot expect \eqref{eq 1203} to generalize to non-cellular varieties $V$, like elliptic curves, since the zeta-functions of such varieties are not products of shifts of $\zeta(s)$. Put another way, the Hasse-Weil zeta-function of a non-cellular variety captures genuinely {\em arithmetic} information about the variety, not merely topological information. Hence for non-cellular $V$ one cannot expect to recover $\zeta_V(s)$ from a topological invariant of $\mathbb{C}(V)$ like $\dot{\zeta}_{KU}\left(s,\mathbb{C}(V)\right)$.
\end{example}

\subsection{The functional equation of the provisional $KU$-local zeta-function.}

For each given weight $w$, the weight $w$ factor $\dot{\zeta}_{KU}^{(w)}(s,X)$ satisfies a functional equation relating $\dot{\zeta}_{KU}^{(w)}(s,X)$ to $\dot{\zeta}_{KU}^{(w)}(1+w-s,X)$. This functional equation is easily extracted from the functional equation from the Riemann zeta-function:
\begin{equation}\label{func eq 1}
 \dot{\zeta}^{(w)}_{KU}(s,X) 
  = \left( 2^{s-w}\pi^{s-w-1}\sin\left( \frac{\pi(s-w-1)}{2}\right)\Gamma(1+w-s)\right)^{\beta_{2w}(X) - \beta_{2w+1}(X)}\dot{\zeta}^{(w)}_{KU}(1+w-s,X).
\end{equation}
The functional equation \eqref{func eq 1} is precisely of the usual form of the functional equation of the $L$-function of a weight $w$ motive $M$, which relates $L(s,M)$ to $L(1+w-s,M)$. 

One ought to compare this situation to the situation of the classical Hasse-Weil zeta-function of a smooth projective variety over $\mathbb{Q}$. That Hasse-Weil zeta-function {\em does} admit a functional equation---it is not only that each of its isoweight factors has a functional equation, but the whole zeta-function itself also does. But the existence of that functional equation relies crucially on the (Weil) cohomology of a smooth projective variety satisfying Poincar\'{e} duality. 

As an example, consider the case of the projective line. An elementary and classical calculation of the Hasse-Weil zeta-function yields $\zeta_{\mathbb{P}^1}(s) = \zeta(s)\zeta(s-1)$. If we agree to write $\hat{\zeta}_{\mathbb{P}^1}(s)$ for the completed zeta-function $\hat{\zeta}(s)\hat{\zeta}(s-1)$, where $\hat{\zeta}(s)$ is the completed Riemann zeta-function satisfying $\hat{\zeta}(s) = \hat{\zeta}(1-s)$, then we have
\begin{align}
\nonumber \hat{\zeta}_{\mathbb{P}^1}(s) 
  &= \hat{\zeta}(s)\hat{\zeta}(s-1)\\
\nonumber  &= \hat{\zeta}(1-s)\hat{\zeta}(2-s) \\
\label{eq 3o4im}  &= \hat{\zeta}_{\mathbb{P}^1}(2-s),
\end{align}
yielding a functional equation for $\zeta_{\mathbb{P}^1}(s)$. However, the equation \eqref{eq 3o4im} exchanged the weight zero $\hat{\zeta}$-factor coming from the Weil $H^0(\mathbb{P}^1)$ with its Poincar\'{e} dual weight $1$ $\hat{\zeta}$-factor coming from the Weil $H^2(\mathbb{P}^1)$! 

In the general story of $KU$-local zeta-functions of finite CW-complexes, we have not restricted our attention to finite CW-complexes satisfying any form of Poincar\'{e} duality, so we cannot expect to get a simple functional equation relating $\dot{\zeta}_{KU}(s,X)$ to $\dot{\zeta}_{KU}(n-s,X)$ for any single value of $n$. Without assuming some kind of self-duality properties on $X$, it seems we must compromise:
\begin{itemize}
\item either settle for having only a functional equation for each isoweight factor of $\dot{\zeta}_{KU}(s,X)$, 
\item or settle for having a functional equation relating $\dot{\zeta}_{KU}(s,X)$ to $\dot{\zeta}_{KU}(s,X^*)$ for some kind of dual $X^*$ of $X$.
\end{itemize}
We already gave the outcome of the first compromise above, in \eqref{func eq 1}. 

The second compromise yields a better result. One formulation of the functional equation for motivic $L$-functions is as follows: if $M$ is a motive, with dual motive $M^{\vee}$, then a functional equation relates $L(s,M)$ to $L(1-s,M^{\vee})$. 
The functional equation for $\dot{\zeta}_{KU}(s,X)$ has an especially nice expression in similar terms. Let us write $\hat{\dot{\zeta}}_{KU}(s,X)$ for the {\em completed provisional $KU$-local zeta-function of $X$,} which we define na\"{i}vely by replacing each zeta-factor $\zeta(s-w)$ in \eqref{provisional zeta 1} with its completion $\hat{\zeta}(s-w)$ in the classical sense:
\begin{align*}
 \hat{\dot{\zeta}}_{KU}(s,X)
  &:= \prod_{w\in \mathbb{Z}} \hat{\zeta}(s-w)^{\beta_{2w}(X) - \beta_{2w+1}(X)}\\
  &= \prod_{w\in \mathbb{Z}} \left( \pi^{-(s-w)/2}\Gamma(\frac{s-w}{2})\zeta(s-w)\right)^{\beta_{2w}(X) - \beta_{2w+1}(X)}.
\end{align*}
\begin{theorem}\label{func eq 1a} 
If we write $D$ for the Spanier-Whitehead dual of a finite spectrum $X$ with torsion-free cohomology concentrated in even degrees, then we have the functional equation
\begin{align}\label{func eq 2a}
 \hat{\dot{\zeta}}_{KU}(s,X) 
  &= \hat{\dot{\zeta}}_{KU}(1-s,DX).
\end{align}
\end{theorem}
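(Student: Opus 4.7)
The plan is to reduce \eqref{func eq 2a} to the classical functional equation $\hat{\zeta}(1-t) = \hat{\zeta}(t)$ by using Spanier-Whitehead duality on Betti numbers. First I would invoke Spanier-Whitehead duality in rational (co)homology: for a finite spectrum $X$, one has $H^n(DX;\mathbb{Q}) \cong H_{n}(DX;\mathbb{Q})^{\vee} \cong H^{-n}(X;\mathbb{Q})$ (the last isomorphism uses that $X$ has torsion-free cohomology, so that $H^{-n}(X;\mathbb{Q})$ is naturally identified with $H_{n}(X;\mathbb{Q})^{\vee}$'s dual via the duality pairing). Consequently $\beta_n(DX) = \beta_{-n}(X)$ for every integer $n$. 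In particular, because $X$ has cohomology concentrated in even degrees by hypothesis, the same is true of $DX$, so both sides of \eqref{func eq 2a} reduce to products of even-weight factors.

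Second, I would unpack the right-hand side using these identifications. By definition,
\begin{align*}
\hat{\dot{\zeta}}_{KU}(1-s, DX)
  &= \prod_{w\in\mathbb{Z}} \hat{\zeta}\bigl((1-s)-w\bigr)^{\beta_{2w}(DX) - \beta_{2w+1}(DX)} \\
  &= \prod_{w\in\mathbb{Z}} \hat{\zeta}(1-s-w)^{\beta_{-2w}(X)},
\end{align*}
where in the second line we used $\beta_{2w+1}(DX) = \beta_{-2w-1}(X) = 0$ and $\beta_{2w}(DX) = \beta_{-2w}(X)$.

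Third, I would apply the classical functional equation for the completed Riemann zeta-function, $\hat{\zeta}(1-t) = \hat{\zeta}(t)$, with $t = s+w$, to rewrite each factor as $\hat{\zeta}(s+w)^{\beta_{-2w}(X)}$. Finally, re-indexing the product by $w' = -w$ gives
\begin{align*}
\hat{\dot{\zeta}}_{KU}(1-s,DX)
  &= \prod_{w'\in\mathbb{Z}} \hat{\zeta}(s-w')^{\beta_{2w'}(X)}
  = \hat{\dot{\zeta}}_{KU}(s,X),
\end{align*}
where the last equality again uses that the odd-degree Betti numbers of $X$ vanish.

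The argument is essentially formal once the correct bookkeeping is set up, and it is more clean than hard. The only substantive point is the duality identification $\beta_n(DX) = \beta_{-n}(X)$; the rest is careful re-indexing. The hypothesis that cohomology is concentrated in even degrees is precisely what makes the odd-index terms in \eqref{provisional zeta 1} irrelevant, so the contributions at weight $w$ and weight $-w$ pair cleanly under $s \leftrightarrow 1-s$ without any residual mismatch between $\beta_{2w+1}$ and $\beta_{2w-1}$ terms. One could reasonably ask whether a more elaborate ``completion'' incorporating the odd-degree Betti numbers and a shift by a half-integer could produce a functional equation without the even-degree hypothesis; I would flag this as a natural follow-up but not attempt it in this proof.
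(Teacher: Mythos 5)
Your argument is correct and is essentially the paper's own proof: the paper simply cites the duality isomorphism $H^n(DX;\mathbb{Q})\cong H^{-n}(X;\mathbb{Q})$ together with the definition of $\hat{\dot{\zeta}}_{KU}$ and the completed Riemann zeta functional equation, which is exactly the bookkeeping you carry out. The only cosmetic difference is that you apply $\hat{\zeta}(t)=\hat{\zeta}(1-t)$ directly rather than routing through the uncompleted equation \eqref{func eq 1}, which changes nothing of substance.
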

\begin{proof}
Elementary from \eqref{func eq 1}, Theorem \ref{thm on existence of zeta functions}, and the fact that $H^n(DX;\mathbb{Q})\cong H^{-n}(X;\mathbb{Q})$.
\end{proof}

\subsection{Special values of the provisional $KU$-local zeta-function.}
\label{Torsion-free...concentrated in even dimensions}

We now study the special values of the provisional $KU$-local zeta-function of a finite CW-complex $X$ at negative integers. Theorem \ref{main spec vals thm} generalizes Theorem \ref{ravenel calc}, the Adams-Baird-Ravenel calculation of the $KU[1/2]$-local stable homotopy groups of spheres in terms of special values of $\zeta(s)$.
\begin{theorem}\label{main spec vals thm}  
Let $X$ be a finite CW-complex, and let $P$ be the set of primes defined in Definition \ref{def of euler product}. Suppose that $H^*(X; \mathbb{Z}[P^{-1}])$ is concentrated in even dimensions. Write $DX$ for the Spanier-Whitehead dual of $X$. Write $a,b$ for the least and greatest integers $n$, respectively, such that $H^{2n}(X;\mathbb{Q})$ is nontrivial. Then we have the following consequences:
\begin{itemize}
\item
The $KU$-local stable homotopy group $\pi_{2k}(L_{KU}DX)[P^{-1}]$ is trivial if either $2k\geq 1-2a$ or $2k<-2b-2$. 
\item 
If $2k-1\geq 1-2a$, then the group $\pi_{2k-1}(L_{KU}DX)$ is finite of order equal to\footnote{The integer \eqref{spec val formula 1} is indeed well-defined, i.e., the special value $\dot{\zeta}_{KU}^{(w)}(1-k,X)$ of the weight $w$ factor of $\dot{\zeta}_{KU}(1-k,X)$ is rational for all $2k-1\geq 1-2a$. Similar observations yield well-definedness of \eqref{spec val formula 1a}, \eqref{spec val formula 2}, and \eqref{spec val formula 2a}.}
\begin{equation}\label{spec val formula 1}
\prod_{w\in\mathbb{Z}} \denom\left(\dot{\zeta}_{KU}^{(w)}(1-k,X)\right),\end{equation} 
up to factors of primes in $P$. 
We furthermore have an equality
\begin{align}\label{spec val formula 1a} 
 \left| \pi_{2k-1}(L_{KU}DX)\right|
  &= \denom\left( \dot{\zeta}_{KU}(1-k,X)\right)\end{align}
 up to powers of irregular primes\footnote{In number theory, regular primes (and their complement, the irregular primes) are classical and well-studied, but since some of the imagined audience for this paper includes topologists who may not be familiar with regular primes, here is a quick recap. A prime $p$ is {\em irregular} if and only if $p$ divides the class number of the cyclotomic field $\mathbb{Q}(\zeta_p)$. Kummer showed that $p$ is irregular if and only if $p$ divides the numerator of the Bernoulli number $B_{2n} = -2n\cdot \zeta(1-2n)$ for some $n\in \{ 1, \dots ,\frac{p-3}{2}\}$. Equivalently, by the Kummer congruences on Bernoulli numbers: $p$ is irregular if and only if $p$ divides the numerator of $B_{2n}$ for some $n\geq 1$.} and primes in $P$.
\item
If $2k-1<-2b-2$, the group $\pi_{2k-1}(L_{KU}DX)$ is finite of order equal to 
\begin{equation}\label{spec val formula 2}
\prod_{w\in\mathbb{Z}} \denom\left(\dot{\zeta}_{KU}^{(w)}(1+k,X)\right).
\end{equation}
up to factors of primes in $P$. 
We furthermore have an equality
\begin{align}\label{spec val formula 2a} 
 \left| \pi_{2k-1}(L_{KU}DX)\right|
  &= \denom\left( \dot{\zeta}_{KU}(1+k,X)\right)\end{align}
 up to powers of irregular primes and primes in $P$.
\end{itemize}
\end{theorem}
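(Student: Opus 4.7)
The plan is to work one odd prime $p\notin P$ at a time and invoke the standard fiber sequence for the $p$-local $KU$-local sphere. Fixing a topological generator $g\in\hat{\mathbb{Z}}_p^\times$, I would smash the sequence
\[ L_{KU}S^0_{(p)} \to \hat{KU}_p \xrightarrow{\psi^g - 1} \hat{KU}_p \]
with the finite spectrum $DX$. Spanier--Whitehead duality identifies $\pi_n(\hat{KU}_p\wedge DX)$ with $\hat{KU}_p^{-n}(X)$, so the long exact sequence in homotopy reads
\[ \cdots \to \pi_n(L_{KU}DX)_{(p)} \to \hat{KU}_p^{-n}(X) \xrightarrow{\psi^g-1} \hat{KU}_p^{-n}(X) \to \pi_{n-1}(L_{KU}DX)_{(p)} \to \cdots. \]
The hypothesis that $H^*(X;\mathbb{Z}[P^{-1}])$ is concentrated in even dimensions and torsion-free at $p$ implies, via the collapsing Atiyah--Hirzebruch spectral sequence, that $\hat{KU}_p^{\mathrm{odd}}(X) = 0$. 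For odd $n = 2k-1$ the sequence then degenerates to
\[ \pi_{2k-1}(L_{KU}DX)_{(p)} \cong \mathrm{coker}\bigl(\psi^g - 1 : \hat{KU}_p^{-2k}(X) \to \hat{KU}_p^{-2k}(X)\bigr), \]
and $\pi_{2k}(L_{KU}DX)_{(p)}$ is the corresponding kernel.

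Next I would compute this cokernel as a determinant. Multiplication by $\beta^k$ gives an isomorphism $\hat{KU}_p^0(X)\cong \hat{KU}_p^{-2k}(X)$, and since $\psi^g(\beta) = g\beta$, the operator $\psi^g$ on the target corresponds to $g^k\psi^g$ on the source. The skeletal filtration on $\hat{KU}_p^0(X)$ has associated graded $\bigoplus_{w} H^{2w}(X;\hat{\mathbb{Z}}_p)$, free of rank $\beta_{2w}(X)$ in each weight, with $\psi^g$ acting as multiplication by $g^w$ on the weight-$w$ piece. Since a determinant only sees the associated graded of a filtered operator,
\[ \det\bigl(\psi^g - 1\mid \hat{KU}_p^{-2k}(X)\bigr) = \prod_{w\in\mathbb{Z}}\bigl(g^{k+w} - 1\bigr)^{\beta_{2w}(X)}. \]
In range (2), the hypothesis $2k-1\geq 1-2a$ forces $k+w\geq 1$, and in range (3), the hypothesis $2k-1<-2b-2$ forces $k+w\leq -1$, for every $w$ with $\beta_{2w}(X)>0$. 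In both regimes the determinant is a nonzero $p$-adic integer, so the kernel (hence $\pi_{2k}$) vanishes, and
\[ |\pi_{2k-1}(L_{KU}DX)_{(p)}| = \prod_{w\in\mathbb{Z}} |g^{k+w}-1|_p^{-\beta_{2w}(X)}. \]

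Finally, I would match each factor with a denominator of a zeta value via the classical Kummer--Iwasawa relation $|g^n-1|_p^{-1} = \denom_p(\zeta(1-n))$, valid for $n\geq 1$ and $p$ odd (a consequence of von Staudt--Clausen and the Kummer congruences on Bernoulli numbers, or equivalently the $p$-adic interpolation of $\zeta$ at negative integers). In range (2), $k+w\geq 1$ gives an immediate identification of $|g^{k+w}-1|_p^{-\beta_{2w}(X)}$ with $\denom_p\bigl(\dot{\zeta}_{KU}^{(w)}(1-k,X)\bigr)$, proving the isoweight formula \eqref{spec val formula 1}; range (3) is analogous, after using the identity $|g^a-1|_p = |g^{-a}-1|_p$ (valid at odd $p$) to rewrite the absolute value with a positive exponent. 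Passing from the isoweight products to the combined formulas \eqref{spec val formula 1a} and \eqref{spec val formula 2a} requires checking that $\prod_w \zeta(1-k-w)^{\beta_{2w}(X)}$ admits no prime-by-prime cancellation between numerators and denominators of its factors; by Kummer's criterion a prime $p$ divides the numerator of $\zeta(1-n)$ (equivalently, the numerator of $B_n$) only when $p$ is irregular, which accounts precisely for the ``up to powers of irregular primes'' caveat. The hard part will be this last bookkeeping step, tracking $p$-adic valuations through the product uniformly in $k$ and $w$ and invoking the Kummer--Iwasawa identity with the correct sign conventions across both sign regimes.
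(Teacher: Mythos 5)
Your argument is correct in substance and takes a genuinely different route from the paper's proof of this theorem. The paper runs the Atiyah--Hirzebruch spectral sequence for the cohomology theory represented by $L_{KU[1/2]}S^0$, with $E_2^{s,t}\cong H^s(X;\mathbb{Z}[P^{-1}])\otimes\pi_{-t}(L_{KU[1/2]}S^0)$, shows by a parity and vanishing-line argument that no differentials can touch the bidegrees contributing to $\pi_n$ for $n\geq 1-2a$ or $n<-2b-2$, and then quotes the Adams--Baird--Ravenel computation (Theorem \ref{ravenel calc}) as a black box to convert the orders of the $E_2$-entries into denominators of $\zeta$-values. You instead work one odd prime $p\notin P$ at a time, identify the relevant homotopy group as the cokernel of $\Psi^g-1$ on the free $\hat{\mathbb{Z}}_p$-module $\hat{KU}_p^{-2k}(X)$, evaluate its order as the $p$-adic valuation of a determinant computed on the associated graded of the skeletal filtration, and re-derive the special-value identity from $v_p(g^n-1)=v_p(\denom(\zeta(1-n)))$ rather than citing Theorem \ref{ravenel calc}. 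Your route is essentially the template the paper itself uses later: the proof of Theorem \ref{main spec vals thm 2} is the continuous-cohomology ($H^*_c(\hat{\mathbb{Z}}_\ell^{\times};-)$) version of your kernel/cokernel computation. What the AHSS argument buys is that it handles all primes outside $P$ at once and never confronts the completion issues discussed next; what your argument buys is self-containedness on the number-theoretic side and a form that generalizes to torsion in $K$-theory. Your observation that the determinant only sees the associated graded is valid here precisely because every stage of the skeletal filtration of $\hat{KU}_p^0(X)$ is free over $\hat{\mathbb{Z}}_p$ (this uses $p\notin P$); it is worth saying so, since this is exactly what fails in \cref{section on even and odd}.

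There is one step that is false as written and must be patched. The fiber of $\Psi^g-1:\hat{KU}_p\rightarrow\hat{KU}_p$ is the $K(1)$-local sphere $L_{K(1)}S^0$, not $L_{KU}S^0_{(p)}\simeq L_{E(1)}S^0$: for instance $\pi_{-1}L_{K(1)}S^0\cong\hat{\mathbb{Z}}_p$ while $\pi_{-1}L_{E(1)}S^0=0$. So your long exact sequence computes $\pi_*F(X,L_{K(1)}S^0)\cong\pi_*L_{K(1)}DX$, and the asserted isomorphism $\pi_{2k-1}(L_{KU}DX)_{(p)}\cong\mathrm{coker}(\Psi^g-1)$ fails for general $k$ (take $X=S^0$ and $k=0$). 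It does hold in the stated ranges, but that requires the fracture square: the fiber of $L_{E(1)}DX\rightarrow L_{K(1)}DX$ has homotopy groups $H_{*+1}(DX;\mathbb{Q}_p/\mathbb{Q})\oplus H_{*+2}(DX;\mathbb{Q}_p)$, which under your hypotheses are concentrated in degrees $[-2b-2,\,-2a-1]$, so the comparison map is an isomorphism on $\pi_n$ exactly for $n\geq 1-2a$ and for $n\leq -2b-3$ --- precisely the two regimes of the theorem. The paper carries out this fracture-square maneuver explicitly in the proof of Theorem \ref{main spec vals thm 2}; you need to import it here. Two smaller points of bookkeeping: first, in range (3) your product $\prod_w\denom_p(\zeta(1+k+w))^{\beta_{2w}(X)}$ agrees with what the paper's own computation produces, but it matches $\prod_w\denom(\dot{\zeta}_{KU}^{(w)}(1+k,X))=\prod_w\denom(\zeta(1+k-w))^{\beta_{2w}(X)}$ only after the reindexing $w\mapsto -w$ (i.e., after passing to the weights of $DX$), so track that sign explicitly; second, your final cancellation argument should note that a prime $p\notin P$ dividing a numerator of some $\zeta(1-k-w)$ is irregular by Kummer's criterion together with the Kummer congruences (which remove the restriction $n\leq\frac{p-3}{2}$), which is exactly how the paper justifies the ``up to irregular primes'' caveat.
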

\begin{proof}
A simple spectral sequence argument suffices. Because $P$ includes all primes at which the cohomology of $X$ has nontrivial torsion, the Atiyah-Hirzebruch spectral sequence for the generalized cohomology theory represented by $L_{KU[1/2]}S^0$ takes the form
\begin{align*}
 E_2^{s,t} \cong H^s(X;\mathbb{Z}[P^{-1}])\otimes_{\mathbb{Z}[1/2]}\pi_{-t}(L_{KU[1/2]}S^0) & \Rightarrow \pi_{-s-t}F\left(X[P^{-1}], L_{KU[1/2]}S^0\right)\\
   & \cong \pi_{-s-t}\left( L_{KU[1/2]}DX\right)[P^{-1}] \\
 d^r: E_r^{s,t} &\rightarrow E_r^{s+r,t-r+1}.
\end{align*}
The spectral sequence converges strongly, since $X$ is finite. Plotted with the Serre conventions, the spectral sequence $E_2$-term is of the following form:

\begin{minipage}{1.0\textwidth}
\centering
\begin{tikzpicture}[trim left=0cm,xscale=1.0,yscale=1.0]
\clip (-1.5,-9.4) rectangle (10.3,5.2);
\fill[green!50!white] (-1.5,-0.8) -- (11.0,-0.8) -- (11.0,-9.5) -- (-1.5,-9.5) -- cycle;
\fill[green!50!white] (-1.5,2.5) -- (11.0,2.5) -- (11.0,9.5) -- (-1.5,9.5) -- cycle;
\draw (-0.35,6.5) -- (-0.35,-0.35) -- (11.5,-0.35);
\draw (-0.35,-9.5) -- (-0.35,-0.35) -- (11.5,-0.35);
\draw[blue,dashed] (-0.35,-0.5) -- (10.35, -11.5);
\draw (-1,-9) node{$\vdots$};
\draw (-1,-8) node{$t=-8$};
\draw (-1,-7) node{$t=-7$};
\draw (-1,-6) node{$t=-6$};
\draw (-1,-5) node{$t=-5$};
\draw (-1,-4) node{$t=-4$};
\draw (-1,-3) node{$t=-3$};
\draw (-1,-2) node{$t=-2$};
\draw (-1,-1) node{$t=-1$};
\draw (-1,0) node{$t=0$};
\draw (-1,1) node{$t=1$};
\draw (-1,2) node{$t=2$};
\draw (-1,3) node{$t=3$};
\draw (-1,4) node{$t=4$};
\draw (-1,5) node{$\vdots$};
\draw (0.15,-0.6) node{$s=2a$};
\draw (2,-0.6) node{$s=2a+2$};
\draw (4,-0.6) node{$s=2a+4$};
\draw (6,-0.6) node{$s=2a+6$};
\draw (8,-0.6) node{$s=2a+8$};
\draw (10,-0.6) node{$\dots$};
\draw (0,0) node{$\square$};
\draw (0,2) node{$\diamond$};
\draw (2,0) node{$\square$};
\draw (2,2) node{$\diamond$};
\draw (4,0) node{$\square$};
\draw (4,2) node{$\diamond$};
\draw (6,0) node{$\square$};
\draw (6,2) node{$\diamond$};
\draw (8,0) node{$\square$};
\draw (8,2) node{$\diamond$};
\draw (10,0) node{$\dots$};
\draw (10,2) node{$\dots$};
\draw (0,-1) node{$\circ$};
\draw (2,-1) node{$\circ$};
\draw (4,-1) node{$\circ$};
\draw (6,-1) node{$\circ$};
\draw (8,-1) node{$\circ$};
\draw (10,-1) node{$\dots$};
\draw (0,-3) node{$\circ$};
\draw (2,-3) node{$\circ$};
\draw (4,-3) node{$\circ$};
\draw (6,-3) node{$\circ$};
\draw (8,-3) node{$\circ$};
\draw (10,-3) node{$\dots$};
\draw (0,5) node{$\vdots$};
\draw (2,5) node{$\vdots$};
\draw (4,5) node{$\vdots$};
\draw (6,5) node{$\vdots$};
\draw (8,5) node{$\vdots$};
\draw (0,3) node{$\circ$};
\draw (2,3) node{$\circ$};
\draw (4,3) node{$\circ$};
\draw (6,3) node{$\circ$};
\draw (8,3) node{$\circ$};
\draw (10,3) node{$\dots$};
\draw (0,-5) node{$\circ$};
\draw (2,-5) node{$\circ$};
\draw (4,-5) node{$\circ$};
\draw (6,-5) node{$\circ$};
\draw (8,-5) node{$\circ$};
\draw (10,-5) node{$\dots$};
\draw (0,-7) node{$\circ$};
\draw (2,-7) node{$\circ$};
\draw (4,-7) node{$\circ$};
\draw (6,-7) node{$\circ$};
\draw (8,-7) node{$\circ$};
\draw (10,-7) node{$\dots$};
\draw (0,-9) node{$\vdots$};
\draw (2,-9) node{$\vdots$};
\draw (4,-9) node{$\vdots$};
\draw (6,-9) node{$\vdots$};
\draw (8,-9) node{$\vdots$};
\end{tikzpicture} 
\end{minipage}
\begin{itemize}
\item In the bidegrees marked with squares (i.e., the $t=0$ row, in the even-numbered columns), we have a direct sum of copies of $\mathbb{Z}[P^{-1}]$.
\item In the bidegrees marked with diamonds (i.e., the $t=2$ row, in the even-numbered columns), we have a direct sum of copies of $\mathbb{Q}/\mathbb{Z}[P^{-1}]$.
\item The white-colored region is trivial in all other bidegrees.
\item In the green-colored regions, in each bidegree in an even-numbered column and an odd-numbered row, we have a finite abelian group (perhaps trivial, depending on $P$ and the bidegree).  Those bidegrees are marked with a circle.
\item The green-colored regions are trivial in all other bidegrees.
\end{itemize}
In particular, on and below the blue dashed line (i.e., the line $s+t=2a-1$), the $E_2$-term is concentrated in even-numbered columns and odd-numbered rows. All differentials originating below the blue-dashed line are zero for degree reasons, and no differentials originating above the blue-dashed line can hit elements below the blue-dashed line. 

Hence the elements below the blue-dashed line in the $E_2$-term survive unchanged to the $E_{\infty}$-term. The bidegrees below the blue-dashed line are precisely those which contribute, in the abutment, to $\pi_{n}(L_{KU[P^{-1}]}DX)$ with $n\geq 1-2a$. 

An analogous argument shows that there can be no nonzero differentials involving bidegrees strictly above the line $s+t = 2b+2$. Consequently the $E_{\infty}$-term coincides with the $E_2$-term in all those bidegrees which contribute in the abutment to $\pi_{n}(L_{KU[P^{-1}]}DX)$ with $n < -2b-2$. 

Depending on the attaching maps in the CW-complex $X$, there may be additive filtration jumps, so that the abutment is not simply the direct sum of the bidegrees in the $E_{\infty}$-page. However, such filtration jumps do not affect the {\em number} of elements in a given degree. Hence, if $n<-2b-2$ or $n\geq 1-2a$, then the order of the homotopy group $\pi_n(L_{KU}DX)[P^{-1}]$ is equal to the product of the orders of the groups $H^{t}(X;\mathbb{Z}[P^{-1}])\otimes_{\mathbb{Z}[1/2]}\pi_{n+t}(L_{KU[1/2]}S^0))$ for all integers $t$. The calculation of the orders of the groups $\pi_{n}(L_{KU[1/2]}S^0))$ for $n<-2$, from Theorem \ref{ravenel calc}, then yields the first claim in the statement of the theorem: if $n\geq 1-2a$, then the total order of the bidegrees $(s,t)$ below the dashed blue line contributing to $\pi_n(L_{KU}DX)[P^{-1}]$ is equal to
\begin{align*}
 \prod_{j} \left|\pi_{2j+n}(L_{KU[1/2]}S^0)\right|^{\beta_{2j}(X)}
  &= \left\{\begin{array}{ll} 
     1 &\mbox{if\ } 2\mid n \\
     \prod_j\denom(\zeta(-j - \frac{n-1}{2}))^{\beta_{2j}(X)}&\mbox{if\ } 2\nmid n\end{array}\right.
\end{align*}
up to powers of primes in $P$.
By a similar argument, if $n<-2b-2$, then 
the total order of the bidegrees $(s,t)$ below the dashed blue line contributing to $\pi_n(L_{KU[1/2]}DX)$ is equal to 
\begin{align*}
 \prod_{j} \left|\pi_{2j+n}(L_{KU[1/2]}S^0)\right|^{\beta_{2j}(X)} 
  &= \prod_{j} \left|\pi_{-2-2j-n}(L_{KU[1/2]}S^0)\right|^{\beta_{2j}(X)} \\
  &= \left\{\begin{array}{ll} 
     1 &\mbox{if\ } 2\mid n \\
     \prod_j\denom(\zeta(j + \frac{n+3}{2}))^{\beta_{2j}(X)}&\mbox{if\ } 2\nmid n\end{array}\right.
\end{align*}
up to powers of primes in $P$.

Consequently:
\begin{itemize} 
\item
$\pi_n(L_{KU[P^{-1}]}DX)$ vanishes for even integers $n$ satisfying $n\geq 1-2a$ or $n<-2b-2$. This proves the first claim.
\item If $2k-1\geq 1-2a$, then $\pi_{2k-1}(L_{KU[P^{-1}]}DX)$ is finite of order $\prod_j\denom(\zeta(1-j-k))^{\beta_{2j}(X)}$, up to factors of primes in $P$. This yields formula \eqref{spec val formula 1} in the second claim.
\item If $2k-1 < -2b-2$, then $\pi_{2k-1}(L_{KU[P^{-1}]}DX)$ is finite of order $\prod_j\denom(\zeta(j+k+1))^{\beta_{2j}(X)}$, up to factors of primes in $P$. This yields formula \eqref{spec val formula 2} in the third claim.
\end{itemize}

The product \eqref{spec val formula 1} is equal to the denominator of $\dot{\zeta}_{KU}(1-k,X)$ up to primes in $P$ and primes which occur in {\em numerators} of values of $\dot{\zeta}^{(w)}_{KU}(1-k,X)$, since such factors in numerators may plausibly cancel with factors in denominators of $\dot{\zeta}^{(w^{\prime})}_{KU}(1-k,X)$ for some weights $w^{\prime}\neq w$. By classical work of Kummer and the relationship between Riemann zeta special-values and Bernoulli numbers, it is precisely the irregular primes which occur as numerators of special values of $\zeta(s)$ at negative integers\footnote{It is possible to improve on this theorem by showing that under some conditions on $X$, there is no cancellation between factors in numerators of special values and factors in denominators of special values. The relevant tool here is the Herbrand-Ribet theorem \cite{MR0419403}, which, for a given irregular prime $p$, gives an algebraic characterization of {\em which} special values of $\zeta(s)$ will have numerator divisible by $p$. These methods are highly compatible with those in this paper, but the resulting conditions on $X$ are more meticulous and technical to state. We leave off that direction of investigation for a later time.}. Formulas \eqref{spec val formula 1a}  and \eqref{spec val formula 2a} follow.
\end{proof}

\begin{corollary}
Let $V$ be a smooth projective cellular variety over $\mathbb{Q}$, with Hasse-Weil zeta-function $\zeta_V(s)$. Let $\mathbb{C}(V)$ denote the complex analytic space associated to $V$. Suppose that the cohomology $H^*(\mathbb{C}(V);\mathbb{Q})$ is concentrated in even degrees. Let $P$ be the set of primes 
\[ P = \{ 2\} \cup \left\{ p \mbox{\ prime}: H^*(\mathbb{C}(V);\mathbb{Z})\mbox{\ has\ nontrivial\ $p$-torsion}\right\} .\]
Then, for each positive odd integer $2k-1$, the denominator of the Hasse-Weil zeta-value $\zeta_V(1-k)$ is equal to the order of the $KU$-local stable homotopy group $\pi_{2k-1}\left(L_{KU}D(\mathbb{C}(V))\right)$ of the Spanier-Whitehead dual $D(\mathbb{C}(V))$, up to powers of irregular primes and primes in $P$.
\end{corollary}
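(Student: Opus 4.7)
The plan is to combine Example~\ref{cellular variety example} with Theorem~\ref{main spec vals thm} applied to the finite CW-complex $X = \mathbb{C}(V)$. Since $V$ is smooth projective over $\mathbb{Q}$, its associated complex analytic space $\mathbb{C}(V)$ is a compact smooth manifold and therefore admits a finite CW structure. First I would verify the cohomological hypothesis of Theorem~\ref{main spec vals thm}: by the definition of $P$, every prime at which $H^*(\mathbb{C}(V);\mathbb{Z})$ carries torsion has been inverted, so $H^*(\mathbb{C}(V);\mathbb{Z}[P^{-1}])$ is torsion-free. Since $H^*(\mathbb{C}(V);\mathbb{Q})$ is assumed to be concentrated in even degrees, the torsion-free group $H^*(\mathbb{C}(V);\mathbb{Z}[P^{-1}])$ is concentrated in even degrees as well.

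Next I would identify the integer $a$ appearing in Theorem~\ref{main spec vals thm} in the special case $X = \mathbb{C}(V)$. Since $V$ is nonempty and projective, $H^0(\mathbb{C}(V);\mathbb{Q})$ is nonzero, so the least integer $n$ with $H^{2n}(\mathbb{C}(V);\mathbb{Q}) \neq 0$ is $a = 0$. The range condition $2k-1 \geq 1-2a = 1$ in Theorem~\ref{main spec vals thm} therefore coincides exactly with the condition that $2k-1$ be a positive odd integer. The second bullet of Theorem~\ref{main spec vals thm}, specifically equation~\eqref{spec val formula 1a}, then yields
\begin{equation*}
\bigl|\pi_{2k-1}\bigl(L_{KU}D(\mathbb{C}(V))\bigr)\bigr| = \denom\bigl(\dot{\zeta}_{KU}(1-k,\mathbb{C}(V))\bigr)
\end{equation*}
up to powers of irregular primes and primes in $P$.

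To finish, I would invoke Example~\ref{cellular variety example}, which asserts exactly that for a smooth projective cellular variety $V$ with rationally even cohomology on $\mathbb{C}(V)$, the identity $\dot{\zeta}_{KU}(s,\mathbb{C}(V)) = \zeta_V(s)$ holds. Substituting $s = 1-k$ into the displayed equation yields the statement of the corollary. I do not anticipate any serious obstacle: Theorem~\ref{main spec vals thm} and Example~\ref{cellular variety example} do all the actual work, and the only minor point requiring attention is matching the numerical range $2k-1 \geq 1-2a$ with ``$2k-1$ a positive odd integer,'' which reduces to the observation that $a = 0$ for any nonempty projective variety.
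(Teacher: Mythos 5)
Your proposal is correct and follows exactly the paper's own argument, which deduces the corollary from Example \ref{cellular variety example} and Theorem \ref{main spec vals thm}; the details you supply (torsion-freeness of $H^*(\mathbb{C}(V);\mathbb{Z}[P^{-1}])$, the identification $a=0$ so that the range $2k-1\geq 1-2a$ is precisely the positive odd integers, and the use of \eqref{spec val formula 1a}) are the right ones. No gaps.
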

\begin{proof}
Consequence of Example \ref{cellular variety example} and Theorem \ref{main spec vals thm}. 
\end{proof}

\begin{remark}\label{cancellation remark 1}
The statement of Theorem \ref{main spec vals thm} suggests that, at irregular primes, there may be a discrepancy between the denominator of $\dot{\zeta}_{KU}(1-k,X)$ and the product \eqref{spec val formula 1}, i.e., the order of $\pi_{-1-2k}(L_{KU}X)$. This indeed can happen. As an amusing example, let $X$ be the cofiber of the map $S^{1355}\rightarrow S^0$ given by the $227$-primary $\alpha_3\in \pi_{1355}(S^0)$, i.e., a generator for the $227$-torsion in the third stable stem in which nonzero $227$-torsion appears\footnote{To be clear: the numbers $227$ and $691$ appearing in this example are indeed prime.}. Then $\pi_{-25}L_{KU}X$ is cyclic of order equal to 
\begin{align*}
 \denom(\zeta(-11))\cdot \denom(\zeta(-689))
  &= 32760 \cdot 387923085396 \\
  &= 2^5\cdot 3^4\cdot 5\cdot 7^2\cdot 11\cdot 13\cdot 31\cdot 47\cdot 139\cdot 691
\end{align*}
up to a power of $2$,
while the denominator of $\dot{\zeta}_{KU}(-11,X)$ is equal to
\begin{align*}
 \denom(\zeta(-11)\cdot \zeta(-689))
  &= 2^5\cdot 3^4\cdot 5\cdot 7^2\cdot 11\cdot 13\cdot 31\cdot 47\cdot 139.
\end{align*}
The discrepancy is because the prime $691$ is irregular: $\zeta(-11) = \frac{691}{32760}$, and the factor of $691$ in the numerator of $\zeta(-11)$ cancels with the factor of $691$ in the denominator of $\zeta(-689)$. 
\end{remark}

Here is one more comment on the ``crudeness'' of the provisional $KU$-local zeta-function.
\begin{definition}
Suppose $E$ is a spectrum such that $\pi_n(L_ES^0)$ is finite for all $n<<0$.
Let say that finite spectra $X$ and $Y$ are {\em $E$-locally numerically equivalent} if there exists some integer $N$ such that
\begin{itemize}
\item for all $n<N$, the abelian groups $\pi_n(L_EX)$ and $\pi_n(L_EY)$ are each finite,
\item and for all $n<N$, $\left|\pi_n(L_EX)\right|=\left|\pi_n(L_EY)\right|$.
\end{itemize}
\end{definition}
If $X$ and $Y$ are $E$-locally equivalent (in the sense of Bousfield \cite{MR551009}), then $X$ and $Y$ are clearly also $E$-locally numerically equivalent. However, the converse is not true: we see from Theorem \ref{main spec vals thm} that, for finite spectra $X$ and $Y$ with torsion-free cohomology concentrated in even degrees, all that is necessary for $X$ and $Y$ to be numerically $KU$-locally equivalent is for $X$ and $Y$ to be {\em rationally} equivalent. Of course there are many such examples: for example, the cofiber of $\alpha_1\in \pi_{2p-3}(S^0)$ is a finite spectrum $X$ whose cohomology is torsion-free and concentrated in even degrees. This spectrum $X$ is {\em not} $KU$-locally equivalent to the wedge sum $S^0\vee S^{2p-2}$, and yet $X$ and $S^0\vee S^{2p-2}$ are rationally equivalent, and $KU$-locally numerically equivalent. One can replace the role of $\alpha_1$ here with any one of the divided alpha elements in the stable stems, and arrive at the same conclusion. 

All those examples are merely $2$-cell complexes, and of course attaching more cells to kill $KU$-locally nontrivial torsion elements in $\pi_*$ in odd degrees yields many more examples. Our point is simply that there is an ample supply of $KU$-local stable homotopy types to which Theorem \ref{main spec vals thm}, and more generally the results of this section, apply.

\begin{remark}
Of course the group $\hat{\mathbb{Z}}_{p}^{\times}$ of Adams operations on $p$-complete $K$-theory is abstractly isomorphic to the Galois group $\Gal(\mathbb{Q}(\zeta_{p^{\infty}})/\mathbb{Q})$. Sullivan's approach to the Adams conjecture \cite{MR0442930} involved producing a {\em particular} such isomorphism. Consequently, if we take the $K$-theory of a finite CW-complex and tensor it with the complex numbers, the resulting complex vector space $KU^0(X)\otimes_{\mathbb{Z}}\mathbb{C}$ is a representation of $\Gal(\mathbb{Q}(\zeta_{p^{\infty}})/\mathbb{Q})$, hence by Kronecker-Weber, a direct sum of degree $1$ representations of the absolute Galois group $\Gal(\overline{\mathbb{Q}}/\mathbb{Q})$. The provisional zeta function $\dot{\zeta}_{KU}(s,X)$ agrees with the $L$-function of that Galois representation.

It is not hard to formulate the ideas in this section as a kind of ``class field theory of spectra.'' Recall that the local Langlands correspondence for $GL_n$ establish an $L$-function-preserving bijection between certain irreducible representations of $GL_n(K)$ and certain irreducible degree $n$ representations of $\Gal(\overline{K}/K)$. The $n=1$ case amounts to class field theory: the Dirichlet characters (or more generally, Hecke characters) on the ``automorphic side'' of the correspondence are matched up with degree $1$ Galois representations on the ``spectral side'' of the correspondence. The Dirichlet (or Hecke) $L$-functions on the automorphic side are equal to the Artin $L$-functions on the spectral side. 

The complex $K$-theory of a finite CW-complexes, tensored up to $\mathbb{C}$, yields representations of the abelianized Galois group $\Gal(\overline{\mathbb{Q}}/\mathbb{Q})$. One can match up these Galois representations with Dirichlet characters in a way that preserves the $L$-functions. This is fine, but in the state described in this section, it is not a very good theory. There are simply too few Galois representations which arise as $KU^0(X)\otimes_{\mathbb{Z}}\mathbb{C}$ for $X$ a finite CW-complex.

Any genuinely {\em useful} class field theory of spectra ought to involve many more Galois representations than the ones arising in this section. In the next section, we give a more general construction of Galois representations associated to finite CW-complexes. That construction yields a much richer supply of Galois representations and corresponding $L$-functions. The essential idea is not merely to tensor the complex $K$-theory $KU^*(X)$ with $\mathbb{C}$, which destroys all information about torsion elements in $KU^*(X)$. Instead, we must find a way to use the {\em torsion} in $K$-theory in the process of defining $KU$-local $L$-functions and zeta-functions.
\end{remark}

\section{The $KU$-local zeta-function of a finite CW-complex with squarefree torsion in $K$-theory.}
\label{section on even and odd}

\subsection{Defining the $KU$-local torsion $L$-function and $KU$-local zeta-function.}
\label{Defining the... torsion...}

In this section, we will modify and improve the ``provisional $KU$-local zeta-function'' $\dot{\zeta}_{KU}(s,X)$ of a finite CW-complex $X$. 
%
It is now time to restrict the level of generality, 
in order to tidy up the theory and make it far more usable. The theorems proven in \cref{Torsion-free...concentrated in even dimensions} applied only to finite CW-complexes whose cohomology (hence also $K$-theory), after inverting $P$, is concentrated in even degrees. Earlier, in \cref{The provisional...}, we defined the provisional $KU$-local zeta-function in such a way so as not to disallow CW-complexes with $K$-theory in odd degrees, in order to maintain similarity with the classical story of Hasse-Weil zeta-functions. But, as the reader can see from \cref{Torsion-free...concentrated in even dimensions}, when the cohomology $H^*(X; \mathbb{Z}[P^{-1}])$ is not concentrated in even degrees, we were able to prove much less about $\dot{\zeta}_{KU}(s,X)$. 

It will simplify our theory dramatically if, from now on, we only work with CW-complexes whose cohomology is concentrated in even degrees, after inverting a set of primes $P$. Beginning in this section, we restrict to that level of generality\footnote{For the reader who may find this restricted generality disappointing, we remark that it is probably sensible to think about {\em two} $KU$-local zeta-functions for a given finite CW-complex, one for the even $K$-groups and one for the odd $K$-groups, and to simply treat them as a pair, rather than as a single function. Something roughly similar is done already in the study of $p$-adic $L$-functions, where one begins with a classical $L$-function $L(s)$ and $p$-adically interpolates its values $L(-a),L(1-p-a),L(2-2p-a),L(3-3p-a), \dots$ {\em separately} for each individual residue class $a$ modulo $p-1$.}.


In Definition \ref{def of ku-local l and zeta}, below, we will give our improved version of the ``provisional $KU$-local zeta-function.'' We will also need an auxiliary definition of something like a ``$KU$-local Dirichlet $L$-function.'' The idea is to modify our definition of the provisional $KU$-local zeta-function with some extra factors that keep track of Adams operations on torsion in the $K$-theory of the CW-complex $X$. We cannot simply take the determinant of the action of the Adams operations on torsion in $K$-theory, since we would not get well-defined complex numbers that way. Instead, we must think of the various ways of embedding the torsion in $KU^0(X)$ into the complex numbers. That is, we must consider the various {\em complex representations} of the torsion subgroup of $KU^0(X)$. To each such representation (satisfying some hypotheses) we will associate an $L$-function.

For a finite CW-complex $X$ with torsion-free cohomology (hence torsion-free $K$-theory) concentrated in even degrees, the filtration of $K$-theory by the skeleta of $X$ coincides with the filtration of $K$-theory defined by the eigenvalues of the Adams operations on $KU^*(X)$. The agreement of these two filtrations is lost when one begins to study torsion in $K$-theory: the filtration by Adams eigenvalues is distinct from the skeletal filtration. It is the skeletal filtration which has good properties, when considering torsion in $K$-theory. Here is the relevant definition:
\begin{definition}\label{def of weight of rep}
Let $P$ be a set of primes\footnote{Here is an explanation of the role of $P$ here and throughout this section. Later, in Theorem \ref{main spec vals thm 2}, we will be able to prove good properties of a certain zeta-function associated to a finite CW-complex $X$ with cohomology concentrated in even degrees, under a hypothesis which is a bit weaker than asking that the torsion subgroup of $KU^0(X)$ has square-free order. We might reasonably want to apply these methods and results to finite CW-complexes which fail to satisfy that hypothesis. Since $X$ is a {\em finite} CW-complex, there is some finite set of primes $P$ such that, after inverting the primes in $P$, $X$ {\em does} satisfy the hypotheses of Theorem \ref{main spec vals thm 2}. So the role of $P$ throughout this section is that it is a set of ``bad primes'' which we will invert, so that the results of this section can be applied to {\em any} finite CW-complex $X$.}, including $2$. Let $X$ be a finite CW-complex with cohomology $H^*(X;\mathbb{Z}[P^{-1}])$ concentrated in even degrees. 
 Let $\rho$ be a complex representation of the torsion subgroup of $KU^0(X)[P^{-1}]$. 
\begin{itemize}
\item
We will say that $\rho$ has {\em skeletal weight $w$} if $w$ is the least integer $n$ such that the composite map
\[ \tors KU^0(X/X^{2n}) \rightarrow \tors KU^0(X)[P^{-1}] \stackrel{\rho}{\longrightarrow} GL(V) \]
is trivial. Here $X^{2n}$ denotes the $2n$-skeleton of $X$.
\item 
We will say that {\em the skeletal filtration of $\tors KU^0(X)[P^{-1}]$ splits additively} if each of the subgroup inclusions in the skeletal filtration 
\begin{equation}\label{skel filt 1}\xymatrix{ 
 \vdots \ar@{^{(}->}[d] \\
 \ker\left( \tors KU^0(X)[P^{-1}]\rightarrow \tors KU^0(X^6)[P^{-1}] \right) \ar@{^{(}->}[d] \\
 \ker\left( \tors KU^0(X)[P^{-1}]\rightarrow \tors KU^0(X^4)[P^{-1}] \right) \ar@{^{(}->}[d] \\
 \ker\left( \tors KU^0(X)[P^{-1}]\rightarrow \tors KU^0(X^2)[P^{-1}] \right) \ar@{^{(}->}[d] \\
 \vdots \ar@{^{(}->}[d] \\
 \tors KU^0(X)[P^{-1}] }\end{equation}
of $\tors KU^0(X)[P^{-1}]$ is a split monomorphism of abelian groups.
We will say that {\em the skeletal filtration of $\tors KU^0(X)[P^{-1}]$ splits completely} if, after completing at each prime $\ell$,
each of the subgroup inclusions in \eqref{skel filt 1} is a split monomorphism of $\hat{\mathbb{Z}}_{\ell}[\hat{\mathbb{Z}}_{\ell}^{\times}]$-modules. 
\item 
Regardless of whether the skeletal filtration of $\tors KU^0(X)[P^{-1}]$ splits, we will write $\tors^{(w)} KU^0(X)[P^{-1}]$ for the skeletal filtration $w$ subquotient of $\tors KU^0(X)[P^{-1}]$, i.e.,
\begin{align*}
 \tors^{(w)} KU^0(X)[P^{-1}]
  &= \frac{\ker\left( \tors KU^0(X)[P^{-1}]\rightarrow \tors KU^0(X^{2w})[P^{-1}] \right)}{\ker\left( \tors KU^0(X)[P^{-1}]\rightarrow \tors KU^0(X^{2w+2})[P^{-1}] \right)}.
\end{align*}
\end{itemize}
\end{definition}

Now we can define an Euler product which, again, cleaves as closely as possible to the Hasse-Weil Euler product \eqref{hasse-weil def 1}, but this time, our Euler product will ``pay attention'' to torsion in $K$-theory. Let $P$ again be a set of primes, including $2$. Suppose that $X$ is a finite CW-complex whose cohomology $H^*(X;\mathbb{Z}[P^{-1}])$ is concentrated in even degrees. 
Furthermore, suppose 
that, for each integer $w$, the order of $\tors^{(w)} KU^0(X)[P^{-1}]$ is square-free. Let $n_w$ denote the order of $\tors^{(w)} KU^0(X)[P^{-1}]$. The group of units $\mathbb{Z}/n_w^2\mathbb{Z}^{\times}$ decomposes canonically as the product $\prod_{\ell\mid n_w} \mathbb{Z}/{\ell}^2\mathbb{Z}^{\times}$ taken over all the primes $\ell$ dividing $n_w$. The group $\tors^{(w)}KU^0(X)[P^{-1}]$ is non-canonically isomorphic to the product, over all such primes $\ell$, of the $\ell$-Sylow subgroup of $\mathbb{Z}/\ell^2\mathbb{Z}^{\times}$. Write $\Syl(n_w)$ for that product of $\ell$-Sylow subgroups, and choose an isomorphism $i_w: \Syl(n_w) \stackrel{\cong}{\longrightarrow}\tors^{(w)} KU^0(X)[P^{-1}]$. 

Now, given a complex representation $\rho$ of $\tors^{(w)} KU^0(X)[P^{-1}]$, we have group homomorphisms
\begin{align*}
 \Syl(n_w) 
  &\stackrel{i_w}{\longrightarrow} \tors^{(w)} KU^0(X)[P^{-1}] \\
  &\stackrel{\Psi^p}{\longrightarrow} \tors^{(w)} KU^0(X)[P^{-1}] \\
  &\stackrel{\rho}{\longrightarrow} GL(V)
\end{align*}
where $p$ is any prime not dividing $n_w$. 
The resulting homomorphism $\rho\circ \Psi^p\circ i_w$ extends canonically to a group homomorphism 
\begin{align*}
 \Psi_{\rho,p,i_w}: \mathbb{Z}/n_w\mathbb{Z}^{\times} &\rightarrow GL(V)\end{align*}
which
\begin{itemize} 
\item agrees with $\rho\circ \Psi^p\circ i_w$ on the summand $\Syl(n_w)$ of $\mathbb{Z}/n_w\mathbb{Z}^{\times}$, 
\item and is trivial on the complementary summand 
of $\Syl(n_w)$ in $\mathbb{Z}/n_w\mathbb{Z}^{\times}$.
\end{itemize}

For any such prime $p$, the group $\mathbb{Z}/n_w\mathbb{Z}^{\times}$ furthermore has a particular element named $p$---simply the integer $p$ in $\mathbb{Z}$, regarded as a residue class in $\mathbb{Z}/n_w\mathbb{Z}^{\times}$. We may evaluate $\Psi_{\rho,p,i_w}$ at $p$ to get an element of $GL(V)$.
Consider the Euler product 
\begin{align}\label{euler product tors 0}
 \prod_{p} \det\left(\id - p^{w-s}\Psi_{\rho,p,i_w}(p)\right)^{-1}
\end{align}
taken over all primes $p$ which do not divide $n_w$.   
The Euler product \eqref{euler product tors 0} mimics the Euler product of the Dirichlet $L$-function of a Dirichlet character $\chi$,
\[ \prod_p \left( 1 - p^{-s} \chi(p)\right)^{-1}\]
as well as the Euler product of the Artin $L$-function of a Galois representation $\rho: \Gal(F/\mathbb{Q})\rightarrow GL(V)$,
\[ \prod_p \det\left( \id - p^{-s}\rho\left(\Fr\mid_{V^{I_p}}\right)\right)^{-1},\]
where $\Fr$ is a lift of the Frobenius element in $\Gal((\mathcal{O}_F/\mathfrak{p})/\mathbb{F}_p)$ to an element of $\Gal(F/\mathbb{Q})$, and $I_p$ is the $p$-inertia subgroup of $\Gal(F/\mathbb{Q})$. 
The power $w-s$, rather than $-s$, in \eqref{euler product tors 0} simply arranges for the skeletal weight $w$ torsion in $K$-theory to contribute to weight $w$ factors in the $L$-function. In the torsion-free case (as in Definition \ref{def of euler product}), this was unnecessary, since these weights all agree automatically: the action of Adams operations on rational $K$-groups naturally recovers the skeletal weight, by the relationship between the skeletal filtration and the filtration by Adams eigenvalues, discussed before Definition \ref{def of weight of rep}. Since that relationship is lost when one considers the torsion in $K$-theory, the $L$-factors coming from torsion in $K$-theory must be put into the correct weight ``by hand,'' i.e., by having a factor of $p^{w-s}$ rather than $p^{-s}$ in \eqref{euler product tors 0}.

Since $\mathbb{Z}/n_w\mathbb{Z}^{\times}$ is abelian, its representations split as direct sums of one-dimensional representations. For such a one-dimensional representation $\pi$, $\Psi_{\pi,p,i_w}$ is a homomorphism
\begin{align*}
  \mathbb{Z}/n_w\mathbb{Z}^{\times} &\rightarrow \mathbb{C}^{\times},\end{align*}
i.e., a Dirichlet character of modulus $n_w$. 
It is not necessarily the case that the Euler product \eqref{euler product tors 0} is the Dirichlet $L$-function of any single Dirichlet character, though. The trouble is that, due to the effect of the Adams operations, for distinct primes $p_1$ and $p_2$ it may happen that the $p_1$-local Euler factor in \eqref{euler product tors 0} is the $p_1$-local Euler factor of one Dirichlet character, while the $p_2$-local Euler factor in \eqref{euler product tors 0} is the $p_2$-local Euler factor of some {\em other} Dirichlet character.

The solution is simply to take the product over {\em sufficiently many} representations: the Euler product ought not to merely be a product over prime {\em numbers,} but also a product over prime {\em representations.} 
\begin{definition}
Let $G$ be a finite cyclic group. An representation of $G$ is {\em prime} if it is irreducible and induced from a nontrivial representation of a subgroup of $G$ of prime order.
\end{definition}
Now consider the Euler product
\begin{align}\label{euler product tors 1}
 \prod_{w\in \mathbb{Z}}\prod_{\rho_w}\prod_{p} \det\left(\id - p^{w-s}\Psi_{\rho_w,p,i_w}(p)\right)^{-1}
\end{align}
where:
\begin{itemize}
\item the product $\prod_{\rho_w}$ ranges over all the {\em prime} representations of $\tors^{(w)}KU^0(X)[P^{-1}]$,
\item and the product $\prod_p$ is taken over all primes $p$ not dividing the order of $\tors KU^0(X)[P^{-1}]$.
\end{itemize}
Of course the product $\prod_{w\in\mathbb{Z}}$ in \eqref{euler product tors 1} is finite, since $X$ is a finite CW-complex, so $\tors^{(w)}KU^0(X)[P^{-1}]$ is trivial for all but finitely many skeletal weights $w$.

We reiterate our assumptions so far in this section: 
\begin{assumptions}\label{running assumptions}
\begin{itemize}
\item $P$ is a set of primes, including $2$,
\item $X$ is a finite CW-complex with cohomology $H^*(X;\mathbb{Z}[P^{-1}])$ concentrated in even dimensions, 
\item and, for each integer $w$, the skeletal weight $w$ subquotient of $\tors KU^0(X)[P^{-1}]$ has square-free order $n_w$.
\end{itemize}
\end{assumptions}

We also adopt the following notation: given a Dirichlet character $\chi$, we write $\tilde{\chi}$ for its associated primitive character, i.e., $\tilde{\chi}$ is a Dirichlet character of modulus equal to the conductor of $\chi$, and $\chi$ is induced up from $\tilde{\chi}$.

With those assumptions and that notation, we have the following result:
\begin{prop}\label{definedness of tors zeta}
The Euler product \eqref{euler product tors 1} is equal to
\begin{equation}\label{euler product formula 1} \prod_{w\in\mathbb{Z}} \prod_{\chi\in \Dir_{prime}(n_w^2)}L(s-w,\tilde{\chi}),\end{equation}
where $\Dir_{prime}(n_w^2)$ is the set of Dirichlet characters of modulus $n_w^2$ which
\begin{itemize}
\item have conductor equal to $\ell^2$ for some prime divisor $\ell$ of $n_w$, and
\item are trivial on the complementary summand of $\Syl_{\ell}(\mathbb{Z}/n_w^2\mathbb{Z}^{\times})\subseteq \Syl(n_w)$ in $\mathbb{Z}/n_w^2\mathbb{Z}^{\times}$.
\end{itemize} 

Consequently the Euler product \eqref{euler product tors 1} does not depend on the choice of $i_w$, and it converges absolutely for all $s\in\mathbb{C}$ such that $\mathfrak{Re}(s)>1+c$, where $c$ is\footnote{A simple upper bound for $c$ is the greatest integer $N$ such that $H^{2N}(X;\mathbb{Z}[P^{-1}])$ is nontrivial. Hence, if we call the latter integer $b$, then \eqref{euler product tors 1} converges absolutely for all $s\in\mathbb{C}$ such that $\mathfrak{Re}(s)>1+b$.} the greatest integer $N$ such that $\tors KU^0(X)[P^{-1}]$ has a nontrivial summand of skeletal weight $N$. Furthermore, \eqref{euler product tors 1} analytically continues to a meromorphic function on the complex plane.
\end{prop}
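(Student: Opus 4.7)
The plan is to identify the Euler product \eqref{euler product tors 1} with the claimed product of shifted Dirichlet $L$-functions \eqref{euler product formula 1} one local Euler factor at a time; once this factorization is established, all remaining assertions of the proposition follow as straightforward consequences. First I would pin down the action of the Adams operation $\Psi^p$ on the skeletal filtration quotient $\tors^{(w)} KU^0(X)[P^{-1}]$ for primes $p$ coprime to $n_w$. Because $\Psi^p$ acts on line bundles by the $p$th tensor power, passing to skeletal weight $w$ yields an action on $\tors^{(w)} KU^0(X)[P^{-1}]$ which, under the chosen isomorphism $i_w$, is encoded by an element of $(\mathbb{Z}/n_w\mathbb{Z})^{\times}$; combined with the weight factor $p^{w-s}$, this is precisely the contribution a Dirichlet character would make to its Euler product.

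Next, for a fixed weight $w$ and a prime representation $\rho_w$ induced from a nontrivial character of a cyclic subgroup of prime order $\ell \mid n_w$, I would show that
\[\det\bigl(\id - p^{w-s}\Psi_{\rho_w,p,i_w}(p)\bigr)^{-1}\]
is exactly the $p$th Euler factor of a shifted primitive Dirichlet $L$-function $L(s-w,\tilde{\chi})$, where $\tilde{\chi}$ is a primitive Dirichlet character of conductor exactly $\ell^2$. The conductor is $\ell^2$, rather than $\ell$ or $1$, because the character used in the induction is nontrivial on the Sylow-$\ell$ subgroup of $(\mathbb{Z}/\ell^2\mathbb{Z})^{\times}$---that is, on the ``wild'' piece---and the triviality of $\Psi_{\rho_w,p,i_w}$ on the complementary summand in $\mathbb{Z}/n_w\mathbb{Z}^{\times}$ precisely matches the vanishing condition defining membership in $\Dir_{prime}(n_w^2)$. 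I would then verify that as $\rho_w$ ranges over all prime representations of $\tors^{(w)} KU^0(X)[P^{-1}]$ and $\ell$ over all prime divisors of $n_w$, the resulting characters $\tilde{\chi}$ traverse $\Dir_{prime}(n_w^2)$ bijectively, and that the primes $p$ absent from \eqref{euler product tors 1} at weight $w$ (namely, those dividing $\mathrm{ord}(\tors KU^0(X)[P^{-1}])$ but not dividing the specific $\ell$ attached to $\tilde{\chi}$) are accounted for by the passage from $\chi$ to $\tilde{\chi}$, i.e., by the standard removal of Euler factors at primes dividing the conductor.

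Having established factor-wise equality, the remaining claims fall out mechanically. The product \eqref{euler product formula 1} makes no mention of $i_w$, so \eqref{euler product tors 1} is independent of the choice of $i_w$. Absolute convergence in the half-plane $\mathfrak{Re}(s) > 1 + c$ follows because each factor $L(s-w,\tilde{\chi})$ converges absolutely for $\mathfrak{Re}(s-w) > 1$ and only finitely many weights $w \leq c$ contribute, so we need $\mathfrak{Re}(s) > 1 + c$. Meromorphic continuation to $\mathbb{C}$ is inherited factor-by-factor from the classical theory of Dirichlet $L$-functions.

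The main obstacle is the middle step: correctly matching the local Euler factor on the topological side with that of a \emph{primitive} Dirichlet character of conductor \emph{exactly} $\ell^2$, as opposed to an imprimitive character or a character of conductor $\ell$. This requires unwinding the definition of $\Psi_{\rho_w,p,i_w}$ carefully, tracking the skeletal weight $w$ as it contributes both the explicit factor $p^{w-s}$ and, through the action of $\Psi^p$ on $\tors^{(w)}$, an additional multiplicative twist, and recognizing that the wildness of the Sylow-$\ell$ subgroup of $(\mathbb{Z}/\ell^2\mathbb{Z})^{\times}$ is precisely what forces the conductor from $\ell$ up to $\ell^2$.
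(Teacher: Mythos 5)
Your overall strategy (match local Euler factors, then deduce independence of $i_w$, convergence, and analytic continuation from the classical theory of Dirichlet $L$-functions) is sound, and your closing deductions are fine. But the middle step as you have set it up fails, and it fails at exactly the point this proposition is designed to negotiate. You propose to fix a prime representation $\rho_w$, show that its entire Euler product over $p$ equals a single shifted $L$-function $L(s-w,\tilde{\chi})$, and then let $\rho_w$ vary. This is not true in general: the Dirichlet character $\Psi_{\rho_w,p,i_w}$ depends on $p$ as well as on $\rho_w$, because $\Psi^p$ acts on the weight-$w$ subquotient $\tors^{(w)}KU^0(X)[P^{-1}]$ by multiplication by $p^w$ (read off from the Atiyah--Hirzebruch spectral sequence, which exhibits this subquotient as a subquotient of the $K$-theory of a wedge of $2w$-spheres). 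Concretely, if $\rho\circ i_w$ restricts to a character $\chi_0$ of order $\ell$ on the $\ell$-summand, the character appearing in the $p$-local factor is $\chi_0^{\,p^w\bmod \ell}$, which varies with $p$ whenever $w\not\equiv 0\pmod{\ell-1}$. So for a fixed $\rho_w$ the factors at different primes belong to different Dirichlet characters, and the product over $p$ alone is not the $L$-function of any single character. This is precisely the phenomenon the paper flags in the paragraph preceding the definition of prime representations, and it is the entire reason the product over prime representations is introduced.

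The correct regrouping is prime-by-prime rather than representation-by-representation: fix $p$ (and $w$); since $\Psi^p$ acts on $\tors^{(w)}KU^0(X)[P^{-1}]$ by an automorphism that does not depend on $\rho_w$, the assignment $\rho_w\mapsto \Psi_{\rho_w,p,i_w}$ is a bijection from the set of prime representations onto $\Dir_{prime}(n_w^2)$ --- a bijection that depends on $p$. Hence for each $\chi\in\Dir_{prime}(n_w^2)$ the $p$-local Euler factor $\left(1-p^{w-s}\tilde{\chi}(p)\right)^{-1}$ occurs exactly once among the factors of \eqref{euler product tors 1} at the prime $p$, and \eqref{euler product formula 1} follows by collecting factors over all $p$. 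Once this is in place, your remaining deductions (independence of $i_w$ because \eqref{euler product formula 1} makes no reference to it, absolute convergence for $\mathfrak{Re}(s)>1+c$ since only weights $w\leq c$ contribute, and meromorphic continuation inherited factorwise) go through as you describe.
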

\begin{proof}
Fix an integer $w$. 
Let $p$ be a prime not dividing $n_w$.
For each prime $\ell$ not dividing $n_w$,
we know from the Atiyah-Hirzebruch spectral sequence 
\begin{align*}
H^*(X;\hat{KU}^*_{\ell}(S^0)[P^{-1}]) &\Rightarrow \hat{KU}^{*}_{\ell}(X)[P^{-1}]
\end{align*}
that the $\hat{\mathbb{Z}}_{\ell}[\hat{\mathbb{Z}}_{\ell}^{\times}]$-module $\tors^{(w)}\hat{KU}^0_{\ell}(X)[P^{-1}]$ is a subquotient of $\hat{KU}^{0}_{\ell}(X^{2w}/X^{2w-1})$, i.e., the $\ell$-complete $K$-theory of a wedge of $2w$-dimensional spheres, on which the Adams operation $\Psi^p$ acts simply by multiplication by $p^w$. The Euler product \eqref{euler product tors 1} is taken only over those primes $p$ which do not divide $n_w$, so the action of $\Psi^p$ on $\tors^{(w)}KU^0(X)[P^{-1}]$ is by an automorphism. This automorphism depends on $p$ and on $w$, but of course it does not depend on $\rho_w$. Hence, as $\rho$ ranges over the set of prime representations of $\tors^{(w)} KU^0(X)[P^{-1}]$ (a set with $\phi(n_w)$ elements), the representations $\Psi_{\rho,p,i_w}$ range over the $\phi(n_w)$ elements of $\Dir_{prime}(n_w^2)$. That is, given an element $\chi\in \Dir_{prime}(n_w^2)$, there exists precisely one prime representation $\rho_w$ such that $\chi = \Psi_{\rho_w,p,i_w}$. 

Now consider the $p$-local Euler factor $(1 - p^{w-s}\tilde{\chi}(p))^{-1}$ in the Dirichlet $L$-series of $\tilde{\chi}$ evaluated at $w-s$. This $L$-factor occurs precisely once as an $L$-factor in \eqref{euler product tors 1}, as $\det\left(\id - p^{w-s}\Psi_{\rho_w,p,i_w}(p)\right)^{-1}$ for precisely that prime representation $\rho_w$ such that $\chi = \Psi_{\rho_w,p,i_w}$.
The product formula \eqref{euler product formula 1} 
follows immediately.
\end{proof}
\begin{remark}
Choose a Dirichlet character $\chi$ of conductor $\ell^2$ appearing in \eqref{euler product formula 1}. Then $\chi$ is trivial on the complementary summand $\mathbb{F}_{\ell}^{\times}$ of the $\ell$-Sylow subgroup of $\mathbb{Z}/\ell^2\mathbb{Z}^{\times}$. This is precisely the {\em opposite} of the behavior of the mod $\ell$ cyclotomic character and its nontrivial powers.

The Dirichlet $L$-functions considered in Mitchell's paper \cite{MR2181837} are $L$-functions only of powers of cyclotomic characters. For that reason, the relationships between $K(1)$-local homotopy theory and special values of $L$-functions considered in \cite{MR2181837} are orthogonal to the relationships considered in this paper. (The paper \cite{MR2181837} also is about $K(1)$-local algebraic $K$-theory, rather than localizations of finite CW-complexes. This difference is important and fundamental, as described in a footnote in \cref{The main ideas and results.}.)
\end{remark}

\begin{definition}\label{def of ku-local l and zeta}
Let $P$ be a set of primes and let $X$ be a finite CW-complex satisfying Assumptions \ref{running assumptions}. 
\begin{itemize}
\item
The {\em torsion $KU[P^{-1}]$-local $L$-function of $X$}, written $L_{\tors KU[P^{-1}]}(s,X)$, is the meromorphic function on $\mathbb{C}$ given by the analytic continuation of the Euler product $\prod_{w\in \mathbb{Z}}\prod_{\rho_w}\prod_{p} \det\left(\id - p^{w-s}\Psi_{\rho_w,p,i_w}(p)\right)^{-1}$ from \eqref{euler product tors 1}.
\item 
The {\em $KU[P^{-1}]$-local zeta-function of $X$}, written $\zeta_{KU[P^{-1}]}(s,X)$, is the product of the provisional $KU$-local zeta-function of $X$ (defined in Definition \ref{def of provisional ku-local zeta}) with the torsion $KU[P^{-1}]$-local $L$-function of $X$:
\begin{align*}
 \zeta_{KU[P^{-1}]}(s,X) &= \dot{\zeta}_{KU}(s,X)\cdot L_{\tors KU[P^{-1}]}(s,X).
\end{align*}
\item 
Given an integer $w$, we will write $L_{\tors KU[P^{-1}]}^{(w)}(s,X)$ for the weight $w$ factor \[\prod_{\rho_w}\prod_{p} \det\left(\id - p^{w-s}\Psi_{\rho_w,p,i_w}(p)\right)^{-1}\] of $L_{\tors KU[P^{-1}]}(s,X)$. We will write $\zeta_{KU[P^{-1}]}^{(w)}(s,X)$ for the weight $w$ factor \[\dot{\zeta}_{KU[P^{-1}]}^{(w)}(s,X)\cdot L_{\tors KU[P^{-1}]}^{(w)}(s,X)\] of $\zeta_{KU[P^{-1}]}(s,X)$.
\item 
Finally, given an integer $w$ and a prime divisor $\ell$ of the order $n_w$ of $\tors^{(w)} KU^0[P^{-1}]$, we write 
$L_{\tors KU[P^{-1}]}^{(w,\ell)}(s,X)$ for the factor 
\begin{align}
\label{def 340589jf}
  L_{\tors KU[P^{-1}]}^{(w,\ell)}(s,X)
   &= \prod_{\chi}L(s-w,\tilde{\chi}),
\end{align}
of $L_{\tors KU[P^{-1}]}^{(w)}(s,X)$, where the product \eqref{def 340589jf} is taken over all the characters $\chi\in\Dir_{prime}(n_w^2)$ of conductor equal to $\ell^2$. 
\end{itemize}
\end{definition}
As a consequence of Definition \ref{def of provisional ku-local zeta} and Proposition \ref{definedness of tors zeta}, we have
\begin{align}
\label{eq 2309} \zeta_{KU[P^{-1}]}(s,X)
  &= \prod_{w\in\mathbb{Z}} \left( \zeta(s-w)^{\dim_{\mathbb{Q}}H_{2w}(X;\mathbb{Q})}\cdot \prod_{\chi\in \Dir_{prime}(n_w^2)}L(s-w,\chi)\right),
\end{align}
where again $n_w$ is the order of the skeletal weight $w$ summand of $\tors KU^0(X)[P^{-1}]$.

\subsection{Functional equations for $L_{\tors KU}(s,X)$ and $\zeta_{KU}(s,X)$.}
\label{Functional equations for tors}
One can use the functional equation for Dirichlet $L$-functions to prove a functional equation for $L_{\tors KU[P^{-1}]}(s,X)$, as follows.
The classical completed Dirichlet $L$-function of an even\footnote{A Dirichlet character $\chi$ is {\em even} if $\chi(-1) = 1$. All Dirichlet characters whose $L$-functions occur as factors in $L_{\tors KU[P^{-1}]}(s,X)$ are even.} primitive Dirichlet character of modulus $\ell$, $\Lambda(s,\chi) := (\pi/\ell)^{-s/2}\Gamma(s/2) L(s,\chi)$, satisfies the functional equation $\Lambda(s,\chi) = W(\chi)\Lambda(1-s,\bar{\chi})$ (see for example 8.5 of \cite{MR3290245}).
Here $W(\chi)$ is the {\em root number} of the Dirichlet character $\chi$, defined as $W(\chi) := \frac{\tau(\chi)}{\sqrt{\ell}}$,
where $\tau(\chi)$ is the Gauss sum $\tau(\chi) = \sum_{m=1}^{\ell} \chi(m) e^{m/\ell}\in\mathbb{C}$.

Let $\hat{L}_{\tors KU[P^{-1}]}(s,X)$ denote the product 
$\prod_{w\in\mathbb{Z}} \prod_{\chi\in \Dir_{prime}(n_w^2)}\Lambda(s-w,\tilde{\chi})$ of the completed Dirichlet $L$-functions of the characters $\chi$ appearing in the factorization \eqref{euler product formula 1} of $L_{\tors KU[P^{-1}]}(s,X)$. 
Since $W(\chi)W(\overline{\chi}) = 1$, the product of the root numbers $\prod_{\chi\in \Dir_{\mathbb{Q}(\zeta_{\ell^2})[\ell]^*}}W(\chi)$ must be $\pm 1$, yielding a functional equation
\begin{align}\label{func eq tors 1}
 \hat{L}_{\tors KU[P^{-1}]}(s,X) 
  &= \pm \hat{L}_{\tors KU[P^{-1}]}(1-s,\Sigma^{-1} DX)
\end{align}
for $\hat{L}_{\tors KU[P^{-1}]}(s,X)$. 

Aside from the $\pm$ in \eqref{func eq tors 1} arising from the root number, the other obvious difference between the functional equation \eqref{func eq tors 1} for $\hat{L}_{\tors KU[P^{-1}]}(s,X)$ and the functional equation \eqref{func eq 2a} for $\hat{\dot{\zeta}}_{KU}(s,X)$ is the presence of the desuspension $\Sigma^{-1}$. The functional equation \eqref{func eq tors 1} relates the completed $KU$-local torsion $L$-functions of $X$ and of the desuspended Spanier-Whitehead dual $DX$, while the functional equation \eqref{func eq 2a} relates the completed $KU$-local provisional zeta-functions of $X$ and of the Spanier-Whitehead dual $DX$, {\em without} any desuspension. 

The reason for this difference is the shape of the universal coefficient sequence in the complex $K$-theory of a finite CW-complex $X$ (originally \cite{anderson}, but see \cite{MR0388375} for a published source, or Theorem IV.4.5 of \cite{MR1417719} for a useful wide generalization):
\begin{equation}\label{ucs 1} 0 \rightarrow \Ext^1_{\mathbb{Z}}(KU_{n-1}(X),\mathbb{Z}) \rightarrow KU^n(X) \rightarrow \hom_{\mathbb{Z}}(KU_{n}(X),\mathbb{Z})\rightarrow 0.\end{equation}
The left-hand term in \eqref{ucs 1} is torsion and depends only on the torsion in $KU_{n-1}(X)\cong KU^{1-n}(DX)$, while 
the right-hand term in \eqref{ucs 1} is torsion-free and depends only on the torsion-free part of $KU_{n}(X)\cong KU^{-n}(DX)$. Consequently the effect of Spanier-Whitehead dualization in $K$-theory is that there is a natural degree shift of $1$ in the $K$-theoretic torsion, while the torsion-free part of $K$-theory does {\em not} get shifted in this way\footnote{Because of this difference, the author knows of no single functional equation for the product $\hat{\zeta}_{KU[P^{-1}]}(s,X)$ of $\hat{\dot{\zeta}}_{KU}(s,X)$ and $\hat{L}_{\tors KU[P^{-1}]}(s,X)$. One would like to find some kind of dualization functor $D^{\prime}$ on finite CW-complexes, which has the same effect as the Spanier-Whitehead dualization $D$ on the torsion-free part of $K$-theory, but which, unlike $D$, does not introduce a degree shift in the torsion in $K$-theory. Then one would have a nice functional equation $\hat{\zeta}_{KU[P^{-1}]}(s,X) = \pm \hat{\zeta}_{KU[P^{-1}]}(1-s,D^{\prime}X)$. The author would be pleased to learn of a way to do this. Perhaps it is possible by modifying $D$ in roughly the way that Anderson \cite{anderson} modified the Brown-Comenetz dualization functor $I$.}. The shift in the $K$-theoretic torsion is the reason the functional equation for the $KU$-local torsion $L$-function must relate $X$ and $\Sigma^{-1} DX$, rather than $X$ and $DX$.

\subsection{Special values of the $KU$-local zeta-function.}

Finally, we had better see how these constructions relate to orders of homotopy groups, or all these definitions are worth very little. Recall that Theorem \ref{main spec vals thm} expressed orders of $KU$-local stable homotopy groups of some finite CW-complexes in terms of special values of provisional $KU$-local zeta-functions, {\em away from} $2$ and the primes dividing the order of the torsion subgroup of $KU^0(X)$. Theorem \ref{main spec vals thm 2} extends Theorem \ref{main spec vals thm} to those primes $p$ such that the order of the torsion subgroup of $KU^0(X)$ is divisible by $p$, but not $p^2$: 
\begin{theorem}\label{main spec vals thm 2}
Let $P$ be a set of primes with $2\in P$, and let $X$ be a finite CW-complex satisfying Assumptions \ref{running assumptions}. 
Let $a,b$ be the least and greatest integers $n$, respectively, such that $H^{2n}(X;\mathbb{Z}[P^{-1}])$ is nontrivial.
Then the following conditions are equivalent:
\begin{itemize}
\item The skeletal filtration of $\tors KU^0(X)[P^{-1}]$ splits completely.
\item 
For all odd integers $2k-1$ satisfying $2k-1> 1-2a$, the $KU$-local stable homotopy group $\pi_{2k-1}(L_{KU}DX)$ is finite, and up to powers of primes in $P$, its order is equal to the product\footnote{To be clear, the product $\prod_{\ell\mid n_w}$ in \eqref{product 18a} is taken over all {\em prime} divisors $\ell$ of the order $n_w$ of $\tors^{(w)} KU^0(X)[P^{-1}]$.}
\begin{equation}\label{product 18a} \prod_{w\in\mathbb{Z}}\left( \denom\left(\dot{\zeta}^{(w)}_{KU}(1-k,X)\right)\cdot \prod_{\ell\mid n_w}\denom\left( L^{(w,\ell)}_{\tors KU[P^{-1}]}(1-k,X)\right)\right)\end{equation}
of denominators of the isoweight factors in $\zeta_{KU}(s,X)$.
\item 
For all odd integers $2k-1$ satisfying $2k-1< -2b-3$, the $KU$-local stable homotopy group $\pi_{2k-1}(L_{KU}DX)$ is finite of order 
\begin{equation*}
\prod_{w\in\mathbb{Z}}\left( \denom\left(\dot{\zeta}^{(w)}_{KU}(k+1,X)\right)\cdot \prod_{\ell\mid n_w}\denom\left( L^{(w,\ell)}_{\tors KU[P^{-1}]}(k+1,X)\right)\right)\end{equation*}
up to powers of primes in $P$.
\end{itemize}
\end{theorem}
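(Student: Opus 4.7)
The plan is to extend the Atiyah-Hirzebruch spectral sequence argument used in the proof of Theorem~\ref{main spec vals thm}. First I would run the spectral sequence
\begin{equation*}
E_2^{s,t} \cong H^s(X;\pi_{-t}L_{KU[P^{-1}]}S^0) \Rightarrow \pi_{-s-t}L_{KU[P^{-1}]}DX
\end{equation*}
and invoke the same vanishing-line analysis as before: differentials originating on or below the line $s+t=2a-1$ vanish (and symmetrically above $s+t=2b+2$) for bidegree reasons, so the $E_\infty$-page agrees with the $E_2$-page in the relevant stem range. In that range, the order of $\pi_n L_{KU[P^{-1}]}DX$ is the product over $s+t=n$ of the orders of the groups $H^s(X;\pi_{-t}L_{KU[P^{-1}]}S^0)$, since additive extensions do not change the total cardinality. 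The torsion-free summand of $H^s(X;\mathbb{Z}[P^{-1}])$ contributes, by the very same Adams-Baird-Ravenel calculation invoked earlier, the factors $\denom\dot{\zeta}^{(w)}_{KU}$ appearing in \eqref{product 18a}. The entire remaining problem is to identify the torsion contribution.

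To prove (1)$\Rightarrow$(2) and (1)$\Rightarrow$(3), I would fix an odd prime $\ell\mid n_w$ and analyze the $\ell$-completion of the torsion part of the $E_2$-term. Under the splitting hypothesis, for each $w$ the module $\tors^{(w)}\hat{KU}^0_\ell(X)[P^{-1}]$ is a $\hat{\mathbb{Z}}_\ell[\hat{\mathbb{Z}}_\ell^\times]$-direct summand of $\tors\hat{KU}^0_\ell(X)[P^{-1}]$; and since this subquotient is realised inside $\hat{KU}^0_\ell(X^{2w}/X^{2w-1})$ (a module on which $\Psi^p$ acts as $p^w$), the Adams action is genuinely of pure weight $w$. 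The summand is cyclic of order $\ell$, so extending scalars to $\mathbb{Z}[\zeta_\ell]$ decomposes it canonically into a direct sum of one-dimensional eigenspaces indexed by the prime Dirichlet characters $\chi\in\Dir_{prime}(n_w^2)$ of conductor $\ell^2$. Passing this decomposition through the AHSS, the contribution of a single character-eigenspace in an odd stem $2k-1$ is an extension of an $\hom$-group and an $\Ext^1$-group whose combined order equals $\denom L(1-k,\tilde{\chi})$, because the Adams eigenvalue on the eigenspace is precisely $p^w\tilde{\chi}(p)$ at each prime $p\nmid n_w$. Multiplying over $\chi$, $\ell$, and $w$ and combining with the torsion-free contribution gives exactly the product \eqref{product 18a}, by the factorization of Proposition~\ref{definedness of tors zeta}; the range $2k-1<-2b-3$ is handled identically after applying Spanier-Whitehead duality and the dual vanishing-line argument.

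For the converse direction, I would proceed by contrapositive: assume the $\hat{\mathbb{Z}}_\ell[\hat{\mathbb{Z}}_\ell^\times]$-splitting of \eqref{skel filt 1} fails at some $(w,\ell)$ and exhibit an odd stem in which the order of $\pi_{2k-1}L_{KU}DX$ disagrees with \eqref{product 18a}. Concretely, a nontrivial extension class in $\Ext^1_{\hat{\mathbb{Z}}_\ell[\hat{\mathbb{Z}}_\ell^\times]}(\tors^{(w')},\tors^{(w)})$ with $w'>w$ produces a cyclic summand of $\tors\hat{KU}^0_\ell(X)[P^{-1}]$ on which $\Psi^p$ acts with eigenvalue $p^{w}$ twisted by a unit whose reduction mod $\ell$ is $p^{w'-w}$, and this twisted action corresponds, under the character-eigenspace dictionary, to a Dirichlet character of conductor $\ell^2$ lying \emph{outside} $\Dir_{prime}(n_w^2)\sqcup \Dir_{prime}(n_{w'}^2)$. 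Choosing $k$ so that the two putative character contributions to \eqref{product 18a} each pick up a factor of $\ell$ in the denominator, while the true AHSS contribution has only one such factor (or vice-versa), yields the desired discrepancy.

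The main obstacle is the core calculation in the second paragraph: verifying that under the splitting hypothesis the character-eigenspace decomposition of the Adams module $\tors^{(w)}\hat{KU}^0_\ell(X)[P^{-1}]$ matches, on the nose and with correct multiplicity, the set $\Dir_{prime}(n_w^2)$ used in Proposition~\ref{definedness of tors zeta}, and that each individual eigenspace computes the correct $\denom L(1-k,\tilde{\chi})$. This is the ``class-field-theoretic'' heart of the argument --- an analogue of the fact that on the classical Langlands side a one-dimensional Galois representation gives precisely the Euler factor of a Dirichlet character --- and the converse direction requires the additional, subtle verification that the failure of splitting cannot accidentally be masked by cancellations in the product of denominators.
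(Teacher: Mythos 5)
Your proposal has a genuine gap, and it is structural rather than cosmetic. The Atiyah--Hirzebruch $E_2$-term $H^s(X;\pi_{-t}L_{KU[P^{-1}]}S^0)$ depends only on the graded abelian group $H^*(X;\mathbb{Z}[P^{-1}])$ and on $\pi_*(L_{KU[P^{-1}]}S^0)$; it is completely blind to the $\hat{\mathbb{Z}}_{\ell}[\hat{\mathbb{Z}}_{\ell}^{\times}]$-module structure of $\tors KU^0(X)[P^{-1}]$, which is exactly the datum whose splitting the theorem is about. If your vanishing-line argument really showed $E_\infty=E_2$ in the relevant stems, the order of $\pi_{2k-1}(L_{KU}DX)$ would be determined by additive data alone and formula \eqref{product 18a} would hold \emph{unconditionally}, contradicting the ``only if'' direction of the statement. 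In fact the vanishing-line argument from Theorem \ref{main spec vals thm} does not carry over: once $H^*(X;\mathbb{Z}[P^{-1}])$ has torsion at primes outside $P$, the universal-coefficient $\Tor$-terms $\Tor(H^{s+1}(X;\mathbb{Z}[P^{-1}]),\pi_{-t}L_{KU[P^{-1}]}S^0)$ populate the \emph{odd} columns of the $E_2$-page, the sparseness that killed all differentials below the line $s+t=2a-1$ is destroyed, and it is precisely these no-longer-excluded differentials (equivalently, the attaching-map data invisible to $E_2$) that encode whether the skeletal filtration splits completely. Your converse direction inherits the same problem: you cannot ``exhibit a discrepancy'' against a formula your own degeneration argument would have proved unconditionally, and the claim that a nonsplit extension produces a cyclic summand with a twisted eigenvalue corresponding to a character outside $\Dir_{prime}$ is not substantiated (a nonsplit extension of $\hat{\mathbb{Z}}_{\ell}[\hat{\mathbb{Z}}_{\ell}^{\times}]$-modules need not even be cyclic as an abelian group).

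There is also a smaller confusion in your ``class-field-theoretic heart'': $\tors^{(w)}\hat{KU}^0_{\ell}(X)$ is cyclic of order $\ell$, and its $\hat{\mathbb{Z}}_{\ell}^{\times}$-action is a \emph{single} character with values in $\mathbb{F}_{\ell}^{\times}$; extending scalars to $\mathbb{Z}[\zeta_{\ell}]$ does not produce $\phi(\ell)$ eigenspaces indexed by $\Dir_{prime}(n_w^2)$. The product over prime characters in Proposition \ref{definedness of tors zeta} arises from complex representations of the coefficient group $\tors^{(w)}KU^0(X)[P^{-1}]$ itself, not from an eigenspace decomposition of the Adams action. The paper's proof avoids all of this by replacing the AHSS with the $K(1)$-local descent spectral sequence $H^s_c(\hat{\mathbb{Z}}_{\ell}^{\times};\hat{KU}^{-t}_{\ell}(X))\Rightarrow\pi_{t-s}F(X,L_{K(1)}S)$, whose input genuinely carries the Adams-module structure and which collapses for cohomological-dimension reasons; the splitting condition is then detected by the auxiliary spectral sequence obtained by filtering $\tors\hat{KU}^{-2k}_{\ell}(X)$ by the ($\ell$-completed) skeletal filtration, whose $E_1$-page is identified with descent spectral sequences of mod $\ell$ Moore spectra and whose differentials vanish if and only if the filtration splits completely (with Bott periodicity used to propagate a nonsplitting into the range of stems where it forces a numerical discrepancy). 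If you want to salvage your approach, you would need to replace the AHSS with a spectral sequence whose input remembers the $\hat{\mathbb{Z}}_{\ell}^{\times}$-action on $K$-theory; the descent spectral sequence is the natural choice.
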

\begin{proof}
Throughout, we continue to use the notation $\tilde{\chi}$ from \cref{Defining the... torsion...}: if $\chi$ is a Dirichlet character, then $\tilde{\chi}$ is its associated primitive character.

Fix an integer $w$. Since $n_w = \left|\tors^{(w)} KU^0(X)[P^{-1}]\right|$ is assumed square-free, it is a product of primes. Let $\ell$ be a prime factor of $n_w$, and let $D_{\ell}$ denote the set of Dirichlet characters $\chi\in \Dir_{prime}(n_w^2)$ of conductor $\ell^2$. Then $\{ \tilde{\chi}: \chi\in D_{\ell}\}$ is precisely the set $\Dir_{\mathbb{Q}(\zeta_\ell)}(\ell^2)[\ell]^*$ of elements of order exactly $\ell$ in the group $\Dir_{\mathbb{Q}(\zeta_\ell)}(\ell^2)$ of $\mathbb{Q}(\zeta_\ell)$-valued Dirichlet characters of modulus $\ell^2$. 
Using Carlitz's estimates \cite{MR0109132},\cite{MR0104630} on $p$-adic valuations of generalized Bernoulli numbers, it was shown in \cite{rmjpaper} that, for any positive integer $k$, the product $\prod_{\tilde{\chi} \in \Dir_{\mathbb{Q}(\zeta_\ell)}(\ell^2)[\ell]^*} L(1-k,\tilde{\chi})$ is a rational number whose denominator is given by:
\begin{align}
\label{rmj denom} \denom\left(\prod_{\tilde{\chi} \in \Dir_{\mathbb{Q}(\zeta_\ell)}(\ell^2)[\ell]^*} L(1-k,\tilde{\chi})\right)
  &= \left\{ \begin{array}{ll} 
   1 &\mbox{if\ } \ell-1\nmid k \\
   \ell &\mbox{if\ } \ell-1\mid k.\end{array}\right.
\end{align}

Consequently, for positive integers $k$, the denominator of $L_{\tors KU^0(X)[P^{-1}]}^{(w)}(1-k,X)$ is equal to the product, over all prime factors $\ell$ of $n_w$, of the numbers
\begin{equation*}
\left\{ \begin{array}{ll} 
   1 &\mbox{if\ } \ell-1\nmid k+w\\
   \ell &\mbox{if\ } \ell-1\mid k+w.\end{array}\right.
\end{equation*}

We must compare that denominator to the order of $\pi_{2k-1}(L_{KU}X)$.
To do this, we will work one prime at a time. Fix a prime factor $\ell$ of the order of $\tors KU^0(X)[P^{-1}]$. Consider the homotopy fixed-point/descent spectral sequence (this is the $n=1$ case of the constructions in \cite{MR2030586}; see that paper and its discussion of the relationship to the earlier constructions of \cite{MR0315016}):
\begin{align}
\label{hfpss 1} E_2^{s,t} \cong H^s_c\left( \hat{\mathbb{Z}}_{\ell}^{\times}; \hat{KU}_{\ell}^{-t}(X)\right) &\Rightarrow \pi_{t-s}\left(F(X,\hat{KU}_{\ell})^{\hat{\mathbb{Z}}_{\ell}^{\times}}\right) \\
\nonumber &\cong \pi_{t-s}F\left(X,L_{K(1)}S\right) \\
\nonumber d_r: E_r^{s,t} &\rightarrow E_r^{s+r,t+r-1},
\end{align}
where $K(1)$ is the $\ell$-primary height $1$ Morava $K$-theory spectrum. To be clear, the notation $F(X,Y)$ denotes the mapping spectrum of maps from $X$ to $Y$, so that $\pi_nF(X,Y) \cong [\Sigma^n X,Y] \cong Y^{-n}(X)$.

We need to make an analysis of what the $E_2$-term of spectral sequence \eqref{hfpss 1} looks like, under the stated hypotheses on $X$. For each integer $k$, we have the short exact sequence of graded $\hat{\mathbb{Z}}_{\ell}[\hat{\mathbb{Z}}_{\ell}^{\times}]$-modules
\begin{equation}\label{ses 03w49} 0 
 \rightarrow \left(\tors KU^{-2k}(X)[P^{-1}]\right)^{\wedge}_{\ell}
 \rightarrow \hat{KU}_{\ell}^{-2k}(X)[P^{-1}] 
 \rightarrow \left(\torsfree KU^{-2k}(X)[P^{-1}]\right)^{\wedge}_{\ell}
 \rightarrow 0.\end{equation}

Consider the long exact sequence induced in $H^s_c(\hat{\mathbb{Z}}_{\ell}^{\times}; -)$ by the short exact sequence \eqref{ses 03w49}. The group $H^s_c(\hat{\mathbb{Z}}_{\ell}^{\times}; \tors\hat{KU}^{-2k}_{\ell}(X))$ is trivial if $s>1$, since it is the continuous cohomology of a profinite group $\hat{\mathbb{Z}}_{\ell}^{\times}\cong \mathbb{F}_{\ell}^{\times}\times \hat{\mathbb{Z}}_{\ell}$ of $\ell$-cohomological dimension $1$ (see for example Corollary 2 of section I.4 of \cite{MR1867431} for this standard argument).
We claim that $H^0_c\left(\hat{\mathbb{Z}}_{\ell}^{\times}; \torsfree \hat{KU}^{-2k}_{\ell}(X)\right)$ is also trivial if $k>-a$ or $k<-b$. The argument is simply that the action of the Adams operations on $\torsfree \hat{KU}^{-2k}_{\ell}(X)$ is detected on the rationalization 
\begin{align*} \mathbb{Q}\otimes_{\mathbb{Z}} \torsfree \hat{KU}^{-2k}_{\ell}(X) 
  &\cong \mathbb{Q}\otimes_{\mathbb{Z}} \hat{KU}^{-2k}_{\ell}(X),
\end{align*}
and the fact that the Adams operations act diagonally on rational $K$-theory (the same argument, essentially by the Chern character, as we used in the proof of Theorem \ref{thm on existence of zeta functions}) implies vanishing unless $-b\leq k\leq -a$.

Consequently \eqref{ses 03w49} yields an isomorphism 
\begin{align}
 H^0_c(\hat{\mathbb{Z}}_{\ell}^{\times}; \tors \hat{KU}^{-2k}_{\ell}(X))
  &\cong H^0_c(\hat{\mathbb{Z}}_{\ell}^{\times}; \hat{KU}^{-2k}_{\ell}(X)) 
\end{align}
and a short exact sequence
\begin{equation}
 0 \rightarrow
  H^1_c(\hat{\mathbb{Z}}_{\ell}^{\times}; \tors \hat{KU}^{-2k}_{\ell}(X)) \rightarrow
  H^1_c(\hat{\mathbb{Z}}_{\ell}^{\times}; \hat{KU}^{-2k}_{\ell}(X)) \rightarrow
  H^1_c(\hat{\mathbb{Z}}_{\ell}^{\times}; \torsfree \hat{KU}^{-2k}_{\ell}(X)) \rightarrow 
  0
\end{equation}
for each $k>-a$ and each $k<-b$. 
 
Another consequence of the vanishing of $H^s_c(\hat{\mathbb{Z}}_{\ell}^{\times}; -)$ for $s\geq 2$ is that the spectral sequence \eqref{hfpss 1} can have no nonzero differentials. Hence we have equalities\footnote{Readers with familiarity with Iwasawa theory will likely notice that this proof, particularly \eqref{h0 eq} and \eqref{h1 eq}, bears a close resemblance to the kinds of manipulation of Iwasawa modules and their cohomology found in, for example, \cite{MR0692110}. We suggest that future work of the kind appearing in this paper---i.e., establishing relationships between orders of Bousfield-localized stable homotopy groups, and special values of $L$-functions---would likely benefit from using the techniques of Iwasawa theory as a natural intermediary between stable homotopy groups (by expressing the input for descent spectral sequences along the lines of \eqref{hfpss 1} in terms of cohomology of Iwasawa modules) and special values of $L$-functions.}
\begin{align}
\label{h0 eq} \left|\pi_{2k}F\left(X,L_{K(1)}S\right)\right|
  &= \left| H^0_c\left(\hat{\mathbb{Z}}_{\ell}^{\times}; \tors \hat{KU}^{-2k}_{\ell}(X)\right)\right| \mbox{\ \ \ and} \\
\label{h1 eq} \left|\pi_{2k-1}F\left(X,L_{K(1)}S\right)\right|
  &= \left| H^1_c\left(\hat{\mathbb{Z}}_{\ell}^{\times}; \tors \hat{KU}^{-2k}_{\ell}(X)\right)\right|\cdot\left| H^1_c\left(\hat{\mathbb{Z}}_{\ell}^{\times}; \torsfree \hat{KU}^{-2k}_{\ell}(X)\right)\right|
\end{align}
for each $k>-a$ and each $k<-b$. 

Assuming that $k>-a+1$, we claim that the $\ell$-adic valuation of $\left| H^1_c\left(\hat{\mathbb{Z}}_{\ell}^{\times}; \torsfree \hat{KU}^{-2k}_{\ell}(X)\right)\right|$ agrees with the $\ell$-adic valuation of the product
\begin{equation}\label{zeta value product} \prod_{w\in\mathbb{Z}} \denom\left(\dot{\zeta}_{KU}^{(w)}(1-k,X)\right)\end{equation}
since both have the same order as the stable homotopy group $\pi_{2k-1}F(X[Q^{-1}],L_{K(1)}S)$, where $Q$ is the set of primes
\[ \{2\} \cup\left\{ p: p\mbox{\ divides\ } \left| \tors KU^0(X)[P^{-1}]\right|\right\}.\]
To see this, observe that the map $X\rightarrow X[Q^{-1}]$ induces an Adams-operation-preserving isomorphism 
\begin{align*}
 \left(\torsfree\ KU^0(X)\right)[Q^{-1}]
  &\stackrel{\cong}{\longrightarrow}
 KU^0(X)[Q^{-1}],
\end{align*}
and the homotopy fixed-point spectral sequence
\begin{align}
\label{hfpss 2} E_2^{s,t} \cong H^s_c\left( \hat{\mathbb{Z}}_{\ell}^{\times}; \hat{KU}_{\ell}^{-t}(X[Q^{-1}])\right) &\Rightarrow \pi_{t-s}F\left(X[Q^{-1}],L_{K(1)}S\right)
\end{align}
collapses at the $E_2$-page with no nonzero differentials, yielding the isomorphism 
\begin{align*}
 H^1_c\left(\hat{\mathbb{Z}}_{\ell}^{\times}; \torsfree \hat{KU}^{-2k}_{\ell}(X)\right) 
  &\cong \pi_{2k-1}F\left(X[Q^{-1}],L_{K(1)}S\right).
\end{align*}

The following type of argument is standard in stable homotopy theory, but we have chosen to present it in more detail than that audience would find necessary, since the author hopes that some number theorists may read this paper too. Write $E(n)$ for the $\ell$-primary height $n$ Johnson-Wilson theory. The ``fracture square''\footnote{The homotopy pullback square \eqref{fracture sq 1} is classical. See Bauer's chapter \cite{bauer2011bousfield} in the book \cite{douglas2014topological} for a nice write-up.}
\begin{equation}\label{fracture sq 1}\xymatrix{
 L_{E(1)}S \ar[r] \ar[d] & L_{K(1)}S \ar[d] \\
 L_{E(0)}S \ar[r] & L_{E(0)}L_{K(1)}S 
}\end{equation}
yields a homotopy fiber sequence
\begin{equation}\label{fib seq 094}  L_{E(1)}S \rightarrow L_{K(1)}S \vee L_{E(0)}S \rightarrow L_{E(0)}L_{K(1)}S .\end{equation}
Bousfield localization at $E(0)$ is simply rationalization, so $L_{E(0)}S$ is simply the Eilenberg-Mac Lane spectrum $H\mathbb{Q}$ representing rational cohomology. Running the homotopy fixed-point spectral sequence \eqref{hfpss 1} for the sphere to calculate $\pi_*(L_{K(1)}S)$ yields that $L_{E(0)}L_{K(1)}S$ splits as a wedge $H\mathbb{Q}_p\vee \Sigma^{-1}H\mathbb{Q}_p$, i.e., mapping into $L_{E(0)}L_{K(1)}S$ yields two copies of $p$-adic rational cohomology, with one copy shifted in degree by $1$.
Consequently the homotopy fiber of the map $L_{E(1)}S \rightarrow L_{K(1)}S$ is weakly equivalent to $\Sigma^{-1}H(\mathbb{Q}_p/\mathbb{Q}) \vee \Sigma^{-2} H\mathbb{Q}_p$.
Consequently, the long exact sequence obtained by applying $[X[Q^{-1}],-]$ to \eqref{fib seq 094} degenerates to an isomorphism 
\begin{align*}
 \pi_{2k-1}F\left(X[Q^{-1}],L_{K(1)}S\right)
  &\cong\pi_{2k-1}F\left(X[Q^{-1}],L_{E(1)}S\right)
\end{align*}
for $k>-a$ and for $k<-b$, since the rational cohomology of $X$ vanishes in the relevant degrees. 
Since the $\ell$-localization of $L_{KU}S$ is $L_{E(1)}S$, the $\ell$-adic valuation of $\left| \pi_{2k-1}F(X[Q^{-1}],L_{KU}S)\right|$ is equal to the $\ell$-adic valuation of $\left| \pi_{2k-1}F(X[Q^{-1}],L_{E(1)}S)\right|$
 The localization of $\pi_{2k-1}F(X[Q^{-1}],L_{KU}S)$ at $\ell$ is isomorphic to $\pi_{2k-1}F(X[Q^{-1}],L_{E(1)}S)$. Finally, if $k>-a+1$, then Theorem \ref{main spec vals thm} gives the equality of the order of $\pi_{2k-1}F(X[Q^{-1}],L_{KU}S)\cong \pi_{2k-1}L_{KU}DX[Q^{-1}]$ with \eqref{zeta value product}.
Hence the factor $\left| H^1_c\left(\hat{\mathbb{Z}}_{\ell}^{\times}; \torsfree \hat{KU}^{-2k}_{\ell}(X)\right)\right|$ in \eqref{h1 eq} is accounted for by the $\ell$-adic valuation of the denominator of $\dot{\zeta}_{KU}(1-k,X)$, when $k>1-a$.

We still need to relate the factors $\left| H^s_c\left(\hat{\mathbb{Z}}_{\ell}^{\times}; \tors \hat{KU}^{-2k}_{\ell}(X)\right)\right|$ in \eqref{h0 eq} and \eqref{h1 eq} to $L_{\tors KU[P^{-1}]}(1-k,X)$. For this we use the finite filtration 
\begin{equation}\label{l-adic skel filt 1} \left(\tors KU^{-2k}(X)[P^{-1}]\right)^{\wedge}_{\ell} = F_k^a \supseteq F_k^{a+1} \supseteq F_k^{a+2} \supseteq \dots\supseteq F_k^{b-1} \supseteq F_k^b = 0\end{equation}
of $\tors KU^{-2k}(X)[P^{-1}]$, where $F_k^t$ is the $\ell$-adic completion of \[ \ker\left( \tors KU^{-2k}(X)[P^{-1}]\rightarrow \tors KU^{-2k}(X^{2t})[P^{-1}] \right).\] That is, \eqref{l-adic skel filt 1} is the $\ell$-adic completion of the skeletal filtration \eqref{skel filt 1} on $KU^{-2k}$.
Applying the continuous group cohomology functor $H^*_c(\hat{\mathbb{Z}}_{\ell}^\times; -)$ to \eqref{l-adic skel filt 1} yields the strongly convergent spectral sequence
\begin{align}
\label{l-adic skel filt ss 1} E_1^{s,t} \cong H^s_c(\hat{\mathbb{Z}}_{\ell}^\times; F_k^t/F_k^{t+1})
  &\Rightarrow H^s_c(\hat{\mathbb{Z}}_{\ell}^\times; \tors \hat{KU}_{\ell}^{-2k}(X)) \\
\nonumber d_r: E_r^{s,t} &\rightarrow E_r^{s+1,t+r}.
\end{align}
The order of the abelian group $F_k^t/F_k^{t+1}$ is equal to the largest power of $\ell$ which divides $n_t$. Since we have assumed that $n_t$ is square-free for all $t$, the abelian group $F_k^t/F_k^{t+1}$ must be either trivial or a one-dimensional $\mathbb{F}_{\ell}$-vector space. In the latter case, the pro-$\ell$-Sylow subgroup of $\hat{\mathbb{Z}}_{\ell}^\times$ must act trivially, and consequently the action of $\hat{\mathbb{Z}}_{\ell}^\times$ on $\mathbb{F}_{\ell}$ is determined by the action of the quotient $\mathbb{F}_{\ell}^{\times}$ of $\hat{\mathbb{Z}}_{\ell}^\times$. There are $p-1$ possible actions of $\mathbb{F}_{\ell}^{\times}$ on $\mathbb{F}_{\ell}$. Of those $p-1$ actions, there are $p-2$ which fix only the zero element, and consequently have trivial group cohomology in all degrees. The single cohomologically nontrivial action of $\mathbb{F}_{\ell}^{\times}$ on $\mathbb{F}_{\ell}$ is the trivial action. We will write $\mathbb{F}_{\ell}^{\triv}$ for $\mathbb{F}_{\ell}$ with the resulting trivial $\hat{\mathbb{Z}}_{\ell}^\times$-action. Then $H^*_c(\hat{\mathbb{Z}}_{\ell}^\times;\mathbb{F}_{\ell}^{\triv})\cong \Lambda(h)$, an exterior $\mathbb{F}_{\ell}$-algebra on a single generator $h$ in degree $1$. Consequently the $E_1$-page of \eqref{l-adic skel filt ss 1} is described as follows:
\begin{itemize}
\item $E_1^{s,t}$ is trivial if $s\neq 0,1$,
\item $E_1^{0,t}\cong E_1^{1,t}$ for all $t$,
\item $E_1^{0,t}$ is a one-dimensional $\mathbb{F}_{\ell}$-vector space if the action of the Adams operations on $\tors^{(t)}\ \hat{KU}_{\ell}^{-2k}(X)$ is trivial,
\item and $E_1^{0,t}$ is trivial otherwise.
\end{itemize}

Consider the direct sum of the spectral sequences \eqref{l-adic skel filt ss 1} over all integers $k$. Call this the ``torsion spectral sequence.'' The cohomology $H^*_c(\hat{\mathbb{Z}}_{\ell}^\times;\tors\hat{KU}_{\ell}^*(X))$ is periodic under the action of $v_1^{-1}$, i.e., the $(\ell-1)$th power of the Bott class in $KU^2$. Consequently, for each weight $w$ such that $F_k^w/F_k^{w+1}$ is nontrivial, we get a contribution to the $E_1$-page of the torsion spectral sequence 
which is isomorphic to the $E_2$-page of the homotopy fixed-point spectral sequence \begin{align*} E_2^{s,t} \cong H^s_c\left( \hat{\mathbb{Z}}_{\ell}^{\times}; \hat{KU}_{\ell}^{-t}(S^{2w-1}/\ell)\right) &\Rightarrow \pi_{t-s}F\left(S^{2w-1}/\ell,L_{K(1)}S\right)\end{align*}
of the $(2w-1)$-dimensional mod $\ell$ Moore spectrum $S^{2w-1}/\ell$. This is because $\hat{KU}^{-2t}_{\ell}(S^{2w-1}/\ell)\cong \mathbb{F}_{\ell}$ with $\Psi^p$ acting as multiplication by $p^{w + t}$, which is the same as the action of $\Psi^p$ on $F^w_t/F^{w+1}_t$.

Since the mod $\ell$ Moore spectrum is rationally acyclic, the square \eqref{fracture sq 1} gives us the weak equivalences 
\begin{align*} 
 F\left(S^{2w-1}/\ell,L_{K(1)}S\right)
  &\simeq F\left(S^{2w-1}/\ell,L_{KU}S\right) \\
  &\simeq L_{KU}D\left( S^{2w-1}/\ell\right)\\
  &\simeq L_{KU}\left(S^{-2w}/\ell\right).
\end{align*}
Now Proposition \ref{definedness of tors zeta}, together with the results of \cite{rmjpaper} summarized above in \eqref{rmj denom}, yields that the group $\pi_{2k-1}L_{KU}\left(S^{-2w}/\ell\right)$, a factor of the $E_1$-page of the torsion spectral sequence, has order equal to the denominator of $L^{(w,\ell)}_{\tors KU[P^{-1}]}(1-k,X)$. 

The conclusion here is that the orders of the groups in the $E_1$-page of the torsion spectral sequence, in the range of degrees described above, agree with the products of the denominators of special values of isoweight factors of $L_{\tors KU[P^{-1}]}(s,X)$ described in \eqref{product 18a}. Meanwhile, the abutment of the torsion spectral sequence is the summand coming from $K$-theoretic torsion in the input for the descent spectral sequence \eqref{hfpss 1} which converges to $\pi_*(L_{KU}DX)[P^{-1}]$ and has no nonzero differentials.
Consequently \eqref{product 18a} is equal to the order of $\pi_{2k-1}(L_{KU}DX)$ for all $k>1-a$, up to powers of primes in $P$, if and only if the torsion spectral sequence collapses at the $E_1$-page with no nonzero differentials involving a bidegree which contributes 
to $E_2^{p,q}$ in \eqref{hfpss 1} in a degree which contributes to $\pi_{n}(L_{KU}DX)$ in the abutment, with $2n > -a$.

If the skeletal filtration of $\tors KU^0(X)[P^{-1}]$ splits completely, then spectral sequence \eqref{l-adic skel filt ss 1} collapses at the $E_1$-page with no nonzero differentials, so formula \eqref{product 18a} is true, as claimed.
For the converse: suppose that the skeletal filtration of $\tors KU^0(X)[P^{-1}]$ does not split completely. Then for at least one value of $t$, the extension of graded $\hat{\mathbb{Z}}_{\ell}[\hat{\mathbb{Z}}_{\ell}^{\times}]$-modules
\[ 0 \rightarrow \coprod_{k\in\mathbb{Z}} F_k^{t+1} \rightarrow \coprod_{k\in\mathbb{Z}}F_k^t \rightarrow \coprod_{k\in\mathbb{Z}} F_k^t/F_k^{t+1} \rightarrow 0\]
is not split. Since the skeletal filtration on $\tors KU^0(X)[P^{-1}]$ was not assumed to split additively, it is not necessarily true that each $F_k^t$ is an $\mathbb{F}_{\ell}$-vector space. However, by finiteness of $X$, it is at least true that there is an integer $N$ such that $F_k^t$ is an $\mathbb{Z}/\ell^N\mathbb{Z}$-module for all $k$ and all $t$. Consequently the action of the Bott element on $\coprod_{k\in\mathbb{Z}}F_k^t$ is periodic\footnote{That is, the action of some power of the Bott element on $\coprod_{k\in\mathbb{Z}}F_k^t$ is not just an isomorphism of abelian groups---which, after all, is true of multiplication by the Bott element itself---but also an isomorphism of $\hat{\mathbb{Z}}_{\ell}[\hat{\mathbb{Z}}_{\ell}^{\times}]$-modules.} of some period (e.g. period $\ell^{N-1}(\ell-1)$). Consequently, if the sequence of $\hat{\mathbb{Z}}_{\ell}[\hat{\mathbb{Z}}_{\ell}^{\times}]$-modules
\begin{equation}\label{ses fkj04} 0 \rightarrow F_k^{t+1} \rightarrow F_k^t \rightarrow F_k^t/F_k^{t+1}\rightarrow 0 \end{equation}
is nonsplit for some value of $k$, then it is also nonsplit for arbitrarily much higher values of $k$, as well as arbitrarily much lower values of $k$.

By analysis of the torsion spectral sequence, if \eqref{ses fkj04} is nonsplit, then the identity element 
\[ \id\in \Ext^0_{cont. \hat{\mathbb{Z}}_{\ell}[\hat{\mathbb{Z}}_{\ell}^{\times}]-mod}\left( \mathbb{F}_{\ell}^{\triv},  \mathbb{F}_{\ell}^{\triv} \right) \cong E_1^{0,t}\]
supports a nonzero differential of some length. Choose some value of $k$ such that \eqref{ses fkj04} is nonsplit and such that the resulting nonzero differential hits a class which contributes to bidegree $E_2^{p,q}$ in \eqref{hfpss 1} in a degree which contributes to $\pi_{n}(L_{KU}DX)$ in the abutment, with $n>>-a$. That differential causes formula \eqref{product 18a} to fail to agree with the order of $\pi_{2k-1}(L_{KU}DX)$ after localization at $\ell$.

Consequently conditions $1$ and $2$ are equivalent. An entirely analogous argument proves the equivalence of conditions $1$ and $3$.
\end{proof}
One family of special cases of Theorem \ref{main spec vals thm 2} was studied in the paper \cite{rmjpaper}. Using the notation introduced in the present paper\footnote{The notation $L_{KU}(s, S/p)$ from \cite{rmjpaper} corresponds to the notation $\zeta_{KU}(s,\Sigma^{-1} S/p)$ in the present paper. The reason for the desuspension is that the approach in \cite{rmjpaper} was based around formulating $KU$-local $L$-functions whose special values recover orders of $KU$-local stable homotopy groups, while in the more general and natural framework of the present paper, the special values of $KU$-local zeta-functions instead recover the $KU$-local stable {\em cohomotopy} groups of a finite CW-complex $X$, or what comes to the same thing, the $KU$-local stable homotopy groups of the Spanier-Whitehead dual $DX$ of $X$. We have $D(\Sigma^{-1} S/p) \simeq S/p$.}, \cite{rmjpaper} showed that $\zeta_{KU}(s,\Sigma^{-1} S/p) = \zeta_F(s)/\zeta(s)$ for any odd prime $p$, where $S/p$ is the mod $p$ Moore spectrum, and where $F$ is the largest totally real subfield of the cyclotomic field $\mathbb{Q}(\zeta_{p^2})$. 

Recall that Theorem \ref{main spec vals thm} made some mention of {\em regular primes,} i.e., those primes $p$ which do not divide the numerator of $\zeta(1-k)$ for any positive integer $k$. Corollary \ref{main spec vals thm 2 cor}, below, requires the following generalization: given a number field $F$, we say that a prime number $p$ is {\em $F$-irregular} if $p$ divides the numerator of $\zeta_F(1-k)$ for some positive integer $k$. Here $\zeta_F(s)$ is the Dedekind zeta-function of $F$. The $\mathbb{Q}$-irregular primes are simply the classical irregular primes.
\begin{corollary}\label{main spec vals thm 2 cor}
Let $X,P,a,b$ be as in Theorem \ref{main spec vals thm 2}. Suppose that the the skeletal filtration of $\tors KU^0(X)[P^{-1}]$ splits completely. Write $N$ for the order of the group $\tors KU^0(X)[P^{-1}]$.
Then, for all odd integers $2k-1\geq 1-2a$, the order of $\pi_{2k-1}(L_{KU}DX)$ is equal to the denominator of $\zeta_{KU[P^{-1}]}(1-k,X)$, up to powers of primes in $P$ and powers of $F$-irregular primes, where $F$ ranges across all the wildly ramified subfields of the cyclotomic field $\mathbb{Q}\left(\zeta_{N^2}\right)$.
\end{corollary}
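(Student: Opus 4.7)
The plan is to derive the corollary from Theorem \ref{main spec vals thm 2} by comparing the product of denominators of the isoweight factors of $\zeta_{KU[P^{-1}]}(s,X)$ to the denominator of their product, and accounting for the discrepancy using classical algebraic number theory. Under the hypothesis that the skeletal filtration of $\tors KU^0(X)[P^{-1}]$ splits completely, Theorem \ref{main spec vals thm 2} identifies $|\pi_{2k-1}(L_{KU}DX)|$ (up to powers of primes in $P$) with
\[\prod_{w\in\mathbb{Z}}\Bigl(\denom\bigl(\dot{\zeta}^{(w)}_{KU}(1-k,X)\bigr)\cdot \prod_{\ell\mid n_w}\denom\bigl(L^{(w,\ell)}_{\tors KU[P^{-1}]}(1-k,X)\bigr)\Bigr).\]
Since $\zeta_{KU[P^{-1}]}(s,X)$ is precisely the product over $w$ and $\ell$ of these same isoweight factors, its denominator divides this expression, and the quotient is supported on primes that appear simultaneously in a numerator of one isoweight factor and a denominator of another.

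Next I would identify each isoweight factor explicitly as a ratio of Dedekind zeta-functions. By Definition \ref{def of provisional ku-local zeta}, $\dot{\zeta}^{(w)}_{KU}(s,X) = \zeta(s-w)^{\beta_{2w}(X)}$ in the present setting, where $\beta_{2w+1}(X) = 0$. By Proposition \ref{definedness of tors zeta} and Definition \ref{def of ku-local l and zeta}, $L^{(w,\ell)}_{\tors KU[P^{-1}]}(s,X) = \prod_{\tilde{\chi}} L(s-w,\tilde{\chi})$, where $\tilde{\chi}$ ranges over the $\ell-1$ nontrivial primitive characters of conductor $\ell^2$ trivial on $\mathbb{F}_{\ell}^{\times}\subseteq (\mathbb{Z}/\ell^2\mathbb{Z})^{\times}$. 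These are exactly the nontrivial characters of the wild quotient $(1+\ell\mathbb{Z})/(1+\ell^2\mathbb{Z}) \cong \mathbb{Z}/\ell\mathbb{Z}$, which is the Galois group of the unique wildly ramified degree $\ell$ subfield $F_{\ell}$ of $\mathbb{Q}(\zeta_{\ell^2}) \subseteq \mathbb{Q}(\zeta_{N^2})$. Hence $L^{(w,\ell)}_{\tors KU[P^{-1}]}(s,X) = \zeta_{F_{\ell}}(s-w)/\zeta(s-w)$, and the full zeta-function is a product of shifts of $\zeta(s)$ and of $\zeta_{F_{\ell}}(s)$ for various $\ell \mid N$.

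Finally, the classical theorem of Kummer, together with its standard generalization to Dedekind zeta-functions of abelian extensions via generalized Bernoulli numbers, says that the primes dividing the numerator of $\zeta_F(1-k)$ for some positive integer $k$ are exactly the $F$-irregular primes. Any prime contributing to the cancellation between numerators and denominators of the isoweight factors of $\zeta_{KU[P^{-1}]}(1-k,X)$ must therefore be $F$-irregular for some $F \in \{\mathbb{Q}\} \cup \{F_{\ell} : \ell \mid N\}$, all of which are subfields of $\mathbb{Q}(\zeta_{N^2})$ (with the $F_{\ell}$ being wildly ramified at $\ell$, and the case $F = \mathbb{Q}$ encompassing the ordinary irregular primes that already appeared in Theorem \ref{main spec vals thm}). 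This yields the claimed equality up to powers of primes in $P$ and of $F$-irregular primes. The only substantive work beyond Theorem \ref{main spec vals thm 2} is the identification of the fields $F_{\ell}$ and the verification that the characters in $\Dir_{prime}(n_w^2)$ of conductor $\ell^2$ are precisely the nontrivial Galois characters of $F_{\ell}$; this is the main but essentially routine number-theoretic bookkeeping.
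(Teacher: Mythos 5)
Your proposal is correct and follows essentially the same route as the paper: reduce to Theorem \ref{main spec vals thm 2}, observe that the only discrepancy between the product of denominators of isoweight factors and the denominator of their product comes from numerator--denominator cancellation, identify the torsion $L$-factors with ratios $\zeta_{F_\ell}(s-w)/\zeta(s-w)$ for wildly ramified subfields of $\mathbb{Q}(\zeta_{N^2})$, and note that numerator primes of such special values are by definition the $F$-irregular primes. The paper's proof is the same argument, merely citing Washington for the identification of the relevant characters with those occurring in Dedekind zeta-functions of wildly ramified subfields rather than writing out the field $F_\ell$ explicitly as you do.
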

\begin{proof}
The Dedekind zeta-function $\zeta_{\mathbb{Q}\left(\zeta_{N^2}\right)}(s)$ factors as the product of the Dirichlet $L$-functions of the primitive Dirichlet characters $\tilde{\chi}$, where $\chi$ ranges across the Dirichlet characters of modulus $N^2$. For a prime divisor $\ell$ of $N$, the Dirichlet characters $\chi$ on $\mathbb{Z}/\ell^2\mathbb{Z}^{\times}$ which vanish on the complementary summand of $\Syl(\ell)$ are those such that the $L$-function of $\tilde{\chi}$ is a factor of the Dedekind zeta-function of a subfield of $\mathbb{Q}\left(\zeta_{N^2}\right)$ in which $\ell$ ramifies wildly. This material is classical; e.g. see Corollary 3.6 and Theorem 4.3 from \cite{MR1421575}. As a consequence, the same argument (about cancellation of factors in numerators with factors in denominators) used in the proof of Theorem \ref{main spec vals thm} suffices here to establish the second claim.
\end{proof}

\begin{remark}
In \eqref{product 18a}, it was important that we take the product, over $w$ and $\ell$, of the denominators of the special values of the weight $w$ conductor $\ell$ $KU$-local zeta-factor. We do {\em not} get the same result if we simply take the denominator of $\zeta_{KU[P^{-1}]}(1-k,X)$. In Remark \ref{cancellation remark 1}, we already saw that a prime factor of the denominator of the weight $w_1$ factor of a $KU$-local provisional zeta-function can cancel with a prime factor of the {\em numerator} of the weight $w_2$ factor, for $w_1\neq w_2$. The same phenomenon occurs with the (non-provisional) $KU$-local zeta-function.

Even within a single weight $w$, it is possible for a prime factor of the denominator of the weight $w$ conductor $\ell_1$ factor of a $KU$-local zeta-function to cancel with a prime factor of the numerator of the weight $w$ conductor $\ell_2$ factor, for $\ell_1\neq \ell_2$. An example occurs already for the desuspended mod $21$ Moore spectrum $\Sigma^{-1} S/21$. We have $KU^0(\Sigma^{-1} S/21) = \tors KU^0(\Sigma^{-1} S/21) \cong \mathbb{Z}/21\mathbb{Z}$, all in weight zero. 
Let $R_3$ (respectively, $R_7$) denote the set of prime representations of $\tors KU^0(\Sigma^{-1}S/21)$ induced up from the order $3$ subgroup (respectively, the order $7$ subgroup). Similarly, write $D_3$ (respectively, $D_7$) for the set of Dirichlet characters of conductor $3$ (respectively, conductor $7$) and modulus $21$ which vanish on the complementary summand of $\Syl(21)$.
Unwinding the equalities from Theorem \ref{main spec vals thm 2}, we have\footnote{The calculations \eqref{eq f34oim4} appeared already in a slightly different context in \cite{rmjpaper}. Computer calculation of these special values of Dirichlet $L$-functions, via generalized Bernoulli numbers (see for example section 1.2 of \cite{MR0360526}), is straightforward in SageMath \cite{sagemath66} or in Magma \cite{MR1484478}.}
\begin{align}
\nonumber \zeta_{KU}(-5,\Sigma^{-1}S/21) 
  &= \prod_{p} \prod_{\rho\in R_3\cup R_7}\frac{1}{\det\left(\id - p^{w-s}\Psi_{\rho_w,p,i_w}(p)\right)} \\
\nonumber  &= L_{\tors KU}^{(0,3)}(-5,\Sigma^{-1}S/21)\cdot L_{\tors KU}^{(0,7)}(-5,\Sigma^{-1}S/21) \\
\nonumber  &= \left(\prod_{\chi\in D_3}L(-5,\tilde{\chi})\right)\cdot \left(\prod_{\chi\in D_7}L(-5,\tilde{\chi})\right) \\
\label{eq f34oim4}  &= \frac{2^2\cdot 7 \cdot 43 \cdot 1171}{3} \cdot \frac{2^6\cdot 138054547\cdot 163933047708171216095114393777711}{7} \\
 \nonumber &= \frac{2^8\cdot 43 \cdot 1171\cdot 138054547\cdot 163933047708171216095114393777711}{3},
\end{align}
whose denominator is $3$. Meanwhile, we have
\begin{align*}
 \denom\left( L_{\tors KU}^{(0,3)}(-5,\Sigma^{-1}S/21)\right) & \\
  \ \ \ \ \ \cdot \denom\left(L_{\tors KU}^{(0,7)}(-5,\Sigma^{-1}S/21)\right) 
  &= \denom\left(\frac{2^2\cdot 7 \cdot 43 \cdot 1171}{3}\right) \\
  &\ \ \ \cdot \denom\left(\frac{2^6\cdot 138054547\cdot 163933047708171216095114393777711}{7}\right)\\
  &= 21\\
  &= \left| \pi_{-13}(L_{KU}S/21))\right|.
\end{align*}
The trouble is that, while $7$ is a regular prime, it is also $F$-irregular, where $F$ is the minimal subfield of $\mathbb{Q}(\zeta_9)$ in which $3$ ramifies wildly. 

This example demonstrates that Corollary \ref{main spec vals thm 2 cor} is perhaps not of great practical use, except in very special cases: the trouble is that there are simply many more $F$-irregular primes than classical irregular primes. Indeed, the prime $2$ is already $F$-irregular, for the same number field $F$ described in the previous paragraph.

The example $X = \Sigma^{-1}S/21$ is minimal in the sense that it minimizes the prime factors ($3$ and $7$) of the order of the torsion in $KU^0(X)$. Nevertheless, the relatively large prime factors $138054547$ and $163933047708171216095114393777711$ occured in the numerator of \eqref{eq f34oim4}. Similarly large (or, in fact, much larger) prime factors are often found in numerators of special values of $\zeta_{KU}(s,X)$ for CW-complexes $X$ that have nontrivial torsion in $K$-theory. 
\end{remark}

\bibliography{/home/asalch/texmf/tex/salch}{}
\bibliographystyle{plain}
\end{document}

--


Our focus will be on the case in which $KU^0(X)$ has no elements of order $p^2$. If a finite abelian group has the property that the order of each of its element is square-free, then of course that abelian group must be isomorphic to a product of groups of prime order. For our purposes it will be very convenient to choose such an isomorphism, i.e., to make a specific choice of a decomposition of $\tors KU^0(X)$ as a product of simple cyclic groups. However, it will also be very convenient to choose a particular model for the simple group of order $p$: rather than $\mathbb{Z}/p\mathbb{Z}$, our model for the cyclic group of order $p$ is the $p$-Sylow subgroup $\Syl_p(\mathbb{Z}/p^2\mathbb{Z}^{\times})$. This will allow us to view Dirichlet characters as functions on the torsion in the $K$-theory of $X$, when we introduce the $KU$-local $L$-function in Theorem \ref{thm on existence of l-functions}.

\begin{definition}\label{def of elem decomp}
Let $G$ be a finite abelian group.
\begin{itemize}
\item
An {\em elementary decomposition of $G$} 
is an isomorphism of groups
\begin{align*} \dec: (\Syl_2\mathbb{Z}/4\mathbb{Z}^{\times})^{n_2}\times (\Syl_3\mathbb{Z}/9\mathbb{Z}^{\times})^{n_3}\times (\Syl_5\mathbb{Z}/25\mathbb{Z}^{\times})^{n_5}\times \dots &\stackrel{\cong}{\longrightarrow} G.\end{align*}
\item 
Suppose $G$ is a finite abelian group equipped with an elementary decomposition $\dec$. 
Every irreducible representation on the finite abelian group 
\begin{equation}\label{long group} (\Syl_2\mathbb{Z}/4\mathbb{Z}^{\times})^{n_2}\times (\Syl_3\mathbb{Z}/9\mathbb{Z}^{\times})^{n_3}\times (\Syl_5\mathbb{Z}/25\mathbb{Z}^{\times})^{n_5}\times \dots\end{equation} is isomorphic to a unique tensor product $\otimes_j \rho_j$, with each $\rho_j$ an irreducible representation of one of the cyclic factors $\Syl_p(\mathbb{Z}/p^2\mathbb{Z}^{\times})$ of \eqref{long group}. By the {\em width} of an irreducible representation $\otimes_j \rho_j$ of the group \eqref{long group}, we mean the number of tensor factors $\rho_j$ which are nontrivial. 
\item Using the isomorphism $\dec$, we transport the notion of width to irreducible representations of $G$: the {\em width} of an irreducible representation $\rho$ of $G$ is the number of nontrivial tensor factors in $\otimes_j \rho_j$, using the elementary decomposition $\dec$ to obtain the tensor decomposition $\rho\cong \otimes_j \rho_j$.
\item We write $\IrrRep_n(G)$ for the set of irreducible representations of $G$ of width equal to $n$.
\item We say that a representation has width $n$ if all its irreducible summands have width $n$.
\end{itemize}
\end{definition}
For example, $\IrrRep_0(G)$ is always just the trivial representation. Meanwhile, $\IrrRep_1(\Syl_p(\mathbb{Z}/p^2\mathbb{Z}^{\times}))$ consists of the $p-1$ nontrivial characters of $\Syl_p(\mathbb{Z}/p^2\mathbb{Z}^{\times})\cong\mathbb{Z}/p\mathbb{Z}$, and $\IrrRep_1(\Syl_p(\mathbb{Z}/p^2\mathbb{Z}^{\times})^2)$ consists of the set of $2(p-1)$ characters $\chi\otimes\chi^{\prime}$ of $\Syl_p(\mathbb{Z}/p^2\mathbb{Z}^{\times})^2$ in which exactly one of the two characters $\chi$ and $\chi^{\prime}$ is trivial. Then $\IrrRep_2(\Syl_p(\mathbb{Z}/p^2\mathbb{Z}^{\times})^2)$ consists of the remaining $p^2 - (2(p-1)) - 1$ characters of $\Syl_p(\mathbb{Z}/p^2\mathbb{Z}^{\times})^2$.

Note that the width of an irreducible representation of $G$ is {\em not} well-defined without making a choice of elementary decomposition, since making a different choice of elementary decomposition of $G$ can change the width of a given character of $G$.

Of course it is not the case that every finite abelian group $G$ admits an elementary decomposition. It is easy to see that $G$ admits an elementary decomposition if and only if the order of every element of $G$ is square-free.

It will be convenient at times to have the following generalization of an elementary decomposition:
\begin{definition}\label{def of elem decomp}
Let $G$ be a finite abelian group, and let $N$ be a positive integer.
Let $p$ be a prime number. A {\em generalized elementary decomposition of $G$} is an isomorphism of groups
\begin{align*} \dec: (\Syl_{p_1}\mathbb{Z}/p_1^{d_1}\mathbb{Z}^{\times})\times (\Syl_{p_2}\mathbb{Z}/p_2^{d_2}\mathbb{Z}^{\times})\times \dots (\Syl_{p_m}\mathbb{Z}/p_m^{d_m}\mathbb{Z}^{\times}) &\stackrel{\cong}{\longrightarrow} G\end{align*}
for some sequence of primes $(p_1, \dots ,p_m)$ and some sequence of positive integers $(d_1, \dots ,d_m)$.
\end{definition}
Every finite abelian group of odd order admits a generalized elementary decomposition.

Let $X$ be a finite CW-complex whose homology groups $H_*(X;\mathbb{Z}[\frac{1}{2}])$ are concentrated in even degrees. Suppose we have a generalized elementary decomposition 
\begin{align*}
\dec: (\Syl_{p_1}\mathbb{Z}/p_1^{d_1}\mathbb{Z}^{\times})\times (\Syl_{p_2}\mathbb{Z}/p_2^{d_2}\mathbb{Z}^{\times})\times \dots (\Syl_{p_m}\mathbb{Z}/p_m^{d_m}\mathbb{Z}^{\times}) &\stackrel{\cong}{\longrightarrow} \tors KU^0(X)[\frac{1}{2}]\end{align*}
of the torsion subgroup of $KU^0(X)[\frac{1}{2}]$. 
Suppose $\rho$ is a $\mathbb{C}$-linear representation of $\tors KU^0(X)[\frac{1}{2}]$ of pure skeletal weight $w$. 
Given an integer $n$ and a prime $p$, both coprime to the order of $\tors KU^0(X)[\frac{1}{2}]$, write $\rho(\Psi^p\vec{n})$ for the image of the element 
\begin{align*} \vec{n}:=(n,n,n, \dots ,n) &\in \mathbb{Z}/p_1^{d_1}\mathbb{Z}^{\times}\times \mathbb{Z}/p_2^{d_2}\mathbb{Z}^{\times}\times \dots \mathbb{Z}/p_m^{d_m}\mathbb{Z}^{\times} \end{align*}
under the composite map
\begin{align*}
  \mathbb{Z}/p_1^{d_1}\mathbb{Z}^{\times}\times \mathbb{Z}/p_2^{d_2}\mathbb{Z}^{\times}\times \dots \mathbb{Z}/p_m^{d_m}\mathbb{Z}^{\times}
   &\twoheadrightarrow (\Syl_{p_1}\mathbb{Z}/p_1^{d_1}\mathbb{Z}^{\times})\times (\Syl_{p_2}\mathbb{Z}/p_2^{d_2}\mathbb{Z}^{\times})\times \dots (\Syl_{p_m}\mathbb{Z}/p_m^{d_m}\mathbb{Z}^{\times})\\
   &\stackrel{\dec}{\longrightarrow} \tors KU^0(X)[\frac{1}{2}] \\
   &\stackrel{\Psi^p}{\longrightarrow} \tors KU^0(X)[\frac{1}{2}] \\
   &\stackrel{\rho}{\longrightarrow} GL(V). 
\end{align*}

Now consider the Euler product
\begin{equation}
\label{artin euler product 1}  
 \prod_{p} \det\left(\id - p^{w-s}(\rho_w)(\Psi^p\vec{p})\right)^{-1}
\end{equation}
taken over all primes $p$ which do not divide the order of $\tors KU^0(X)[\frac{1}{2}]$.

It will be very convenient to have a notation for the Euler product \eqref{artin euler product 1}, whose value depends on a choice of complex number $s$, representation $\rho$, and generalized elementary decomposition $\dec$. The notation $L_{KU}(s,\rho,\dec)$ seems reasonable. However, sometimes notations like $L(s)$ and $\zeta(s)$ are reserved for those Euler products which are known to actually converge in some reasonable part of the complex plane, and perhaps have analytic continuation, a functional equation, or other good properties. The author has no idea where \eqref{artin euler product 1} converges, or whether it has any other good properties. Nevertheless we will throw caution to the wind and write $L_{KU}(s,\rho,\dec)$ for the Euler product \eqref{artin euler product 1}. 

While $L_{KU}(s,\rho,\dec)$ is somewhat mysterious when considered by itself, the product of the Euler products $L_{KU}(s,\rho,\dec)$ over a suitable family of representations $\rho$ enjoys many good properties:
\begin{theorem}\label{thm on existence of l-functions}
Let $X$ be a finite CW-complex. Choose an elementary decomposition $\dec$ for the torsion subgroup of $KU^0(X)[1/2]$. 
Suppose that the action of each Adams operation $\Psi^p$ on $\tors KU^0(X)[1/2]$ respects the elementary decomposition, i.e., if an element of $x\in \tors KU^0(X)[1/2]$ is in a summand ggxx
Let $w$ be an integer, and let $L_{\tors^{(w)} KU}(X,\dec)$ denote the product 
\[ \prod_{\rho} L_{KU}(s,\rho,\dec),\] where the product is taken over all width $1$ complex representations $\rho$ of $\tors KU^0(X)[1/2]$ of skeletal weight $w$.
Then the following claims are each true:
\begin{enumerate} 
\item $L_{\tors^{(w)} KU}(s,\dec)$ converges absolutely for all complex numbers $s$ with $\mathfrak{Re}(s) > 1+b$. 
\item $L_{\tors^{(w)} KU}(s,\dec)$ analytically continues to a meromorphic function on the complex plane. 
\item The meromorphic function $L_{\tors^{(w)} KU}(s,\dec)$ is equal to the $L$-function of a motive\footnote{Unlike the case of Theorem \ref{thm on existence of zeta functions} where $X$ has torsion-free cohomology, the relevant motive here is not cellular unless $\rho$ is the trivial representation.}. In particular, $L_{\tors^{(w)} KU}(s,\dec)$ is equal to a product of ``shifts'' of Dirichlet $L$-functions.
\end{enumerate}
\end{theorem}
\begin{proof}
For a prime $p$ not dividing the order of $\tors KU^0(X)[1/2]$, the Adams operation $\Psi^p$ is of course a permutation of the elements of $\tors KU^0(X)[1/2]$. Consequently, for each representation $\psi$ of $\tors KU^0(X)[1/2]$, there ggxx

In the Euler factor $\det\left(\id - p^{w-s}(\rho_w)(\Psi^p\vec{p})\right)^{-1}$ in \eqref{artin euler product 1}, the effect of the Adams operation $\Psi^p$ is to 
For a given representation $\rho$ of $\tors KU^0(X)[1/2]$ and a given prime $p$ not dividing the order of $\tors KU^0(X)[1/2]$, the 

For a given prime $p$, consider the product of the $p$-primary Euler factors in each of the products $L_{KU}(s,\rho)$:
\begin{align*} 
 \prod_{\rho} \det\left(\id - p^{w-s}(\rho_w)(\Psi^p\vec{p})\right)^{-1}
  &= \prod_{\rho}charpoly(ggxx)
\end{align*}

gggxxx
For each irreducible width $1$ weight $w$ summand $\rho^{\prime}$ of $\rho$, there exists a unique summand $\Syl_{\ell}(\mathbb{Z}/\ell^2\mathbb{Z}^{\times})$ in the given elementary decomposition of $\tors KU^0(X)[\frac{1}{2}]$ such that $\rho^{\prime} = \rho^{\prime\prime}\otimes \rho^{\prime\prime\prime}$, with $\ell$ prime, with $\rho^{\prime\prime}$ nontrivial on the given summand, and with $\rho^{\prime\prime\prime}$ trivial on its complementary summand. Unwinding the definitions, $\Psi^p\circ\rho^{\prime\prime}\circ \dec$ defines a nonprincipal\footnote{If this character were principal, i.e., the trivial Dirichlet character of modulus $\ell^2$, then $\rho$ would be trivial, i.e., width $0$ rather than width $1$.} $\mathbb{Q}(\zeta_{\ell})$-valued Dirichlet character of modulus $\ell^2$ given by the composite
\begin{align*}
 \mathbb{Z}/\ell^2\mathbb{Z}^{\times} 
  &\twoheadrightarrow \Syl_{\ell}(\mathbb{Z}/\ell^2\mathbb{Z}^{\times})\\
  &\stackrel{\Psi^p\circ\rho^{\prime\prime}\circ \dec}{\longrightarrow} GL_1(\mathbb{C}).
\end{align*}
Consequently \eqref{artin euler product 1} is equal to a product
\begin{equation}
\label{dirichlet factorization} 
 \prod_{w\in\mathbb{Z}}\prod_{p} \det\left(\id - p^{w-s}\rho_w(\Psi^p\vec{p})\right)^{-1}ggxx
\end{equation}

ggxx
The action of $\Psi^p$ on $\tors KU^0(X)$ may or may not preserve the chosen elementary decomposition, but {\em because $\rho$ has width $1$}, the image of $\Psi^p(\dec(p,p,p\dots,p))\in \tors KU^0(X)$ under $\rho$ is determined entirely by $\rho$ and by the component of $\Psi^p(\dec(1,1,1\dots,1))$ in $\mathbb{Z}/2\mathbb{Z}^{n_2}\times \mathbb{Z}/3\mathbb{Z}^{n_3}\times \mathbb{Z}/5\mathbb{Z}^{n_5}\times \dots$ on which $\rho\circ\dec$ is nontrivial. 

Consequently $\rho(\dec(1,\dots ,1))$ is some primitive\footnote{If the $\ell$th root of unity were not primitive, then $\rho$ would be trivial, i.e., width $0$ rather than width $1$.} $\ell$th root of unity in $\mathbb{C}$, and $\Psi^p//\ell$ is another

the complex linear operator $\Psi^p//\rho^{\prime}$ 

Then $\Psi^p$ acts on the summand $\mathbb{Z}/\ell\mathbb{Z}$ of $\tors KU^0(X)$ in the way that $\Psi^p$ acts on $KU^0(S^{2w}/\ell)$, the $K$-theory of the cofiber of the degree $\ell$ map on the $2w$-sphere. 

Consequently the slash action of $\Psi^p$ on $\rho^{\prime\prime}$ is a permutation of the primitive $\ell$th roots of unity in $\mathbb{C}$ given by some element of $\Gal(\mathbb{Q}(\zeta_{\ell})/\mathbb{Q})$. That is, $\Psi^p$ acts on $\rho^{\prime\prime}$ by multiplication by ggxx
. This permutation depends on the

Consequently the factor of \eqref{artin euler product 1} corresponding to $\rho^{\prime}$ is
\[ \prod_p (1 - p^{w-s}\chi(p))^{-1}\]
for some Dirichlet character $\chi: \mathbb{Z}/\ell\mathbb{Z}\rightarrow \mathbb{C}$. That is, ggxx

\end{proof}

The product is as follows. Use the elementary decomposition to write $\tors KU^0(X)[\frac{1}{2}]$ as a direct sum of cyclic summands of prime order. Write $\rho$ as a direct sum of a trivial representation $\pi$ and irreducible representations $\rho_1, \dots ,\rho_m$ of width $1$. Write $w(\rho_i)$ for the weight of $\rho_i$. Then
\begin{align}\label{factorization 1a}
  L_{KU}(s,\rho) 
    &= \zeta(s)^{\deg \pi}\cdot \prod_{i=1}^m 
L_{KU}(s,\rho_w) \\ggxx
\prod_{w\in\mathbb{Z}}\left(\zeta(s-w)\cdot \prod_{p\in P} (1-p^{-(s-w)})\right)^{\dim_{\mathbb{Q}}H_{2w}(X;\mathbb{Q}) - \dim_{\mathbb{Q}}H_{2w+1}(X;\mathbb{Q})},
\end{align}
where $\rho = \oplus_n \rho_n$ is the decomposition of $\rho$ 


\begin{remark}
Surely a cleaner theory should be possible. The author suggests that Theorem \ref{thm on existence of l-functions} ought to still be true even without the assumption on the width of the summands. This is probably not difficult, although it is unnecessary for the immediate topological applications.

It would also be nice to be able to construct $L_{KU}(s,\rho)$ in a way that is more clearly independent of the choice of elementary decompositon of $\tors KU^0(X)[\frac{1}{2}]$, since we do not prove that $L_{KU}(s,\rho)$ is independent of $\dec$ until gx INSERT INTERNAL REF. Perhaps the assumption of the existence of an elementary decompositon of $\tors KU^0(X)[\frac{1}{2}]$ can even be done away with entirely. 
\end{remark}

Given a representation $\rho$ of the torsion subgroup of $KU^0(X)[P^{-1}]$, we write $L_{KU[P^{-1}]}(s,\rho)$ for the analytic continuation of the Euler product 
\begin{equation}
  \prod_{p\notin P} \det\left(\id - p^{-s}\Psi^p\mid_{\rho}\right)^{-1}
\end{equation}
to a meromorphic function on the complex plane.

ggxx
\begin{definition}\label{def of ku-local zeta}
Let $P$ be a finite set of primes, with $2\in P$. 
Let $X$ be a finite CW-complex whose homology groups $H_*(X;\mathbb{Z}[P^{-1}])$ are concentrated in even degrees. Suppose we have an elementary decomposition $\dec$ of the torsion subgroup of $KU^0(X)[P^{-1}]$. 
\begin{itemize}
\item Given a representation $\rho$ of the torsion subgroup of $KU^0(X)[P^{-1}]$, we write $L_{KU[P^{-1}]}(s,\rho)$ for the analytic continuation of the Euler product 
\begin{equation}
  \prod_{p\notin P} \det\left(\id - p^{-s}\Psi^p\mid_{\rho}\right)^{-1}
\end{equation}
to a meromorphic function on the complex plane.
\item The {\em $KU[P^{-1}]$-local zeta function of $X$} is the analytic continuation of the Euler product
\begin{equation}
 \left(\prod_p \det\left( \id - p^{-s}\Psi_p\mid_{KU^0(X)[P^{-1},p^{-1}]}\right)^{-1}\right)\cdot \prod_{\rho\in \IrrRep_1(\tors KU^0(X)[P^{-1}])} L_{KU[P^{-1}]}(s,\rho)
\end{equation}

\end{itemize}
ggxx

Then the {\em $KU[P^{-1}]$-local zeta function of $X$} is the analytic continuation of the Euler product
\begin{equation}
 \left(\prod_p \det\left( \id - p^{-s}\Psi_p\mid_{KU^0(X)[P^{-1},p^{-1}]}\right)^{-1}\right)\cdot \prod_{\rho\in \IrrRep_1(\tors KU^0(X)[P^{-1}])} \prod_ggxx
\end{equation}

\end{definition}

ggxx

\section{$KU$-local class field theory for finite CW-complexes.}

ggxx

--
\bibliography{/home/asalch/texmf/tex/salch}{}
\bibliographystyle{plain}
\end{document}

\section{Introduction}

\begin{conventions}
Throughout this paper, we will write $\mathcal{C}$ for a tensor triangulated category. The motivating example of $\mathcal{C}$ is the classical stable homotopy category.

Here are some further conventions we will observe, all intended to make standard notations from stable homotopy theory equally meaningful in $\mathcal{C}$:
\begin{itemize}
\item We will write $\smash$ for the monoidal product, $S$ for the unit object, and $\Sigma$ for the suspension autofunctor of $\mathcal{C}$. We will write $[X,Y]$ for the abelian group of homomorphisms $X\rightarrow Y$ in $\mathcal{C}$. 
\item Given an integer $n$ and an object $X$ of $\mathcal{C}$, we will write $S^n$ for the $n$th suspension $\Sigma^n S$ of the unit object of $\mathcal{C}$. We will write $\pi_n(X)$ for the set $[S^n,X]$. We write $\pi_*(X)$ or $X_*$ for the graded abelian group $\oplus_{n\in\mathbb{Z}}\pi_n(X)$.
\item Given objects $E,X$ of $\mathcal{C}$, we write $E_n(X)$ for $\pi_n(E\smash X)$. 
\item Given an element $f\in [S,S]$, we can form the homotopy colimit
\end{itemize}
\end{conventions}

\section{Review of some homotopy (co)limits in triangulated categories.}
In a triangulated category, it is well known that one does not have a well-behaved completely general theory of homotopy limits and colimits without passing to a more structured theory, like model categories, or $\infty$-categories, or derivators. Nevertheless {\em some} homotopy limits and colimits are defined in any complete, co-complete triangulated category: see \cite{MR1214458} for a nice treatment. 

In particular, the following homotopy limits and colimits are well-defined\footnote{To be clear, by ``well-defined'' here we mean that the homotopy limit of such a functor $F: \mathcal{D}\rightarrow\mathcal{C}$ can be defined in such a way that, for any model category $\tilde{\mathcal{C}}$ whose homotopy category is $\mathcal{C}$ and any lift of the functor $F$ to a functor $\tilde{F}:\mathcal{D}\rightarrow\tilde{\mathcal{C}}$, the projection of $\holim F$ to the homotopy category $\mathcal{C}$ of $\tilde{\mathcal{C}}$ agrees with the explicitly defined homotopy limit of $F$ in $\mathcal{C}$.} in any complete, co-complete triangulated category $\mathcal{C}$:
\begin{description}
\item[Products and coproducts] Homotopy products (respectively, homotopy coproducts) in $\mathcal{C}$ are defined simply as the ordinary categorical products (respectively, ordinary categorical coproducts)  in $\mathcal{C}$. 
\item[Sequential limits and colimits] The homotopy limit of a sequence \[\dots \stackrel{f_1}{\longrightarrow} X_1 \stackrel{f_0}{\longrightarrow} X_0\]
in $\mathcal{C}$ is the fiber of the map $\id - T: \prod_i X_i \rightarrow \prod_i X_i$, where $T$ is the map that applies $f_i$ to the factor $X_{i+1}$ in $\prod_i X_i$. Sequential homotopy colimits are defined dually.
\item[Localization away from an element] Given an element $f: S\rightarrow S$ and an object $X$ of $\mathcal{C}$, the {\em localization $X[f^{-1}]$ of $X$ away from $f$} is defined as the homotopy colimit of the sequence $X\stackrel{f\smash X}{\longrightarrow} X\stackrel{f\smash X}{\longrightarrow} \dots $.
\item[Localization at an ideal] If the endomorphism ring $[S,S]$ is countable, then given an ideal $I$ of $[S,S]$ and an object $X$ of $\mathcal{C}$, the {\em localization $X_I$ of $X$ at $I$} is defined as follows: choose an enumeration $t_1, t_2, \dots$ of the set $T$ of elements of $[S,S]$ not in $I$, and then take the homotopy colimit of the sequence
\[ X \rightarrow X[t_1^{-1}]\rightarrow X[t_1^{-1}][t_2^{-1}]\rightarrow X[t_1^{-1}][t_2^{-1}][t_3^{-1}]\rightarrow \dots .\]
A similar construction works also when $[S,S]$ is not countable, but it takes a bit of explanation to say why the homotopy colimits over uncountable ordinals are well-defined. We leave that detour to the reader who is interested in it.
\item[Fixed points and orbits for certain finite groups] Let $G$ be a finite group acting on an object $X$ of $\mathcal{C}$. Suppose that the order of $G$ is inverted in $X$, i.e., the localization map $X\rightarrow X[\left| G\right|^{-1}]$ is an isomorphism in $\mathcal{C}$. Then the homotopy fixed-point object $X^{hG}$ (respectively, homotopy orbits object $X_{hG}$) is defined as the ordinary categorical limit (respectively, ordinary categorical colimit) of the functor $G\rightarrow X$, regarding $G$ as a category with a single object.
\item[$p$-adic completion] Since a triangulated category is additive, $[S,S]$ is a ring, hence admits a canonical ring map $\mathbb{Z} \rightarrow [S,S]$. Hence, for each integer $n$ and each object $X$ of $\mathcal{C}$, we have the {\em mod $n$ reduction of $X$}, written $X/n$ and defined as the cofiber of the map $S\smash X \stackrel{n\smash X}{\longrightarrow} S\smash X$. For each prime number $p$, we also have the {\em $p$-adic completion of $X$}, written $\hat{X}_p$ and defined as the homotopy limit of the natural sequence $\dots \rightarrow X/p^3 \rightarrow X/p^2 \rightarrow X/p$. 
\end{description}

\section{Adams algebras.}

\begin{definition}
Let $K$ be a monoid object in $\mathcal{C}$. Let $P$ be a set of prime numbers.
For each $p\in P$, suppose that the $p$-completion $\hat{K}_p$ is equipped with an action of a topologically cyclic profinite group $G_p$. 
 We say that $K$ is an {\em $P$-Adams algebra} if $K$ satisfies the following conditions:
\begin{itemize}
\item The graded ring $K_*K$ is flat as a graded left (equivalently, graded right) $K_*$-module.
\item $K$  group of $p$-adic units $\hat{\mathbb{Z}}_p^{\times}$ acts on $K$
ggxx
For each prime number $p\in P$, the coalgebroid $\left((K_{(p)})_*,(K_{(p)})_*(K_{(p)})\right)$ is isomorphic to the continuous $\pi_0(K_{(p)})$-linear dual of the ggxx

\end{itemize}